\theoremstyle{plain}
\newtheorem{thm}{Theorem}
\newtheorem{lemma}[thm]{Lemma}
\newtheorem{corol}[thm]{Corollary}
\theoremstyle{definition}
\theoremstyle{remark}
\DeclareMathOperator*{\argmin}{arg\,min}
\DeclareMathOperator*{\argmax}{arg\,max}
\DeclareMathOperator*{\lexmin}{lex\,min}
\DeclareMathOperator*{\ext}{ext}
\DeclareMathOperator*{\interior}{int}
\DeclareMathOperator*{\conv}{conv}
\renewcommand{\phi}{\varphi}
\renewcommand{\theta}{\vartheta}
\renewcommand{\epsilon}{\varepsilon}
\newcommand{\R}{\mathbb{R}}
\newcommand{\Z}{\mathbb{Z}}
\newcommand{\cU}{\mathcal{U}}
\newcommand{\cX}{\mathcal{X}}
\newcommand{\cY}{\mathcal{Y}}
\newcommand{\Rep}{\mathcal{R}}
\newcommand{\PsingleU}[1]{\textup{P}^\textup{single}(#1)}
\definecolor{DklCyan}{rgb}{0.12,0.47,0.58}
\newcommand{\hide}[1]{}
\definecolor{myred}{RGB}{255,0,0}
\definecolor{mygreen}{RGB}{0,255,0}
\definecolor{myblue}{RGB}{0,0,255}
\definecolor{myyellow}{RGB}{255,255,0}
\definecolor{mygreenblue}{RGB}{0,255,127}
\definecolor{mypurple}{RGB}{85,26,139}
\definecolor{mygrey}{RGB}{100,100,100}
\tikzset{%
  >={Latex[width=2mm,length=2mm]},
            base/.style = {rectangle, rounded corners, draw=black,
                           minimum width=4cm, minimum height=1cm,
                           text centered, font=\sffamily},
  activityStarts/.style = {base, fill=blue!30},
       startstop/.style = {base, fill=red!30},
    activityRuns/.style = {base, fill=green!30},
         process/.style = {base, minimum width=2.5cm, fill=orange!15,
                           font=\ttfamily},
}
\newcommand\tikzmark[1]{%
  \tikz[remember picture,overlay]\node[inner xsep=0pt] (#1) {};}
\newcommandtwoopt\Textboxred[5][0cm][8cm]{%
\begin{tikzpicture}[remember picture,overlay]
  \coordinate (aux) at ([xshift=#1]#4);
  \node[inner ysep=3pt,yshift=0.6ex,draw=red,thick,
    fit=(#3) (aux),baseline] 
    (box) {};
  \node[text width=#2,anchor=north east,
    font=\sffamily\footnotesize,align=right] 
    at (box.north east) {#5};
\end{tikzpicture}%
}
\newcommandtwoopt\Textboxblue[5][0cm][8cm]{%
\begin{tikzpicture}[remember picture,overlay]
  \coordinate (aux) at ([xshift=#1]#4);
  \node[inner ysep=3pt,yshift=0.6ex,draw=blue,thick,
    fit=(#3) (aux),baseline] 
    (box) {};
  \node[text width=#2,anchor=north east,
    font=\sffamily\footnotesize,align=right] 
    at (box.north east) {#5};
\end{tikzpicture}%
}
\newcommand{\leqnomode}{\tagsleft@true\let\veqno\@@leqno}
\newcommand{\reqnomode}{\tagsleft@false\let\veqno\@@eqno}
\author[1]{Fabian Chlumsky-Harttmann\footnote{supported by DFG under grant SCHO 1140/11-1}}
\author[2]{Marie Schmidt}
\author[1,3]{Anita Schöbel}
\affil[1]{Faculty of Mathematics, University of Kaiserslautern-Landau}
\affil[2]{Faculty of Mathematics and Computer Science, University of Würzburg}
\affil[3]{Fraunhofer Institute for Industrial Mathematics ITWM}
\title{Approaches for Biobjective Integer Linear Robust Optimization}
\date{}
\begin{document}
\maketitle

\paragraph{Abstract}
Real-world optimization problems often do not just involve multiple objectives but also
uncertain parameters.
In this case, the goal is to find Pareto-optimal solutions that are
robust, i.e., reasonably good under all possible realizations of the uncertain data.
Such solutions have been studied in many papers within the last ten years and are called \emph{robust efficient}.
However, solution methods for finding robust efficient solutions are scarce.
In this paper, we develop three algorithms for determining robust efficient solutions to
biobjective mixed-integer linear robust optimization problems. 

To this end, we draw from methods for both multiobjective optimization and robust optimization:
dichotomic search for biobjective mixed-integer optimization problems and an
optimization-pessimization approach from (single-objective) robust optimization, which
iteratively adds scenarios and thereby increases the uncertainty set. 
We propose two algorithms that combine dichotomic search with the optimization-pessimization
method as well as a dichotomic search method for biobjective linear robust optimization that
exploits duality.
On the way we derive some other results:
We extend dichotomic search from biobjective linear problems to biobjective linear
minmax problems and generalize the optimization-pessimization method 
from single-objective to multi-objective robust optimization problems.

We implemented and tested the three algorithms on linear and integer linear
instances and discuss their respective strengths and weaknesses.

\section{Introduction}

Real-world optimization problems are often complicated by two issues:
First, in many cases decision makers have not only one but multiple objectives. Second, the optimization problems may involve uncertainty -- be it through prediction errors about parameters like demand, that will only be known in the future, or measurement errors.
These two issues are treated in the fields of multiobjective optimization and robust optimization.
\medskip

In order to do deal with problems that are both uncertain and multiobjective, 
multiobjective robust optimization has been studied for more than ten years leading to various models and theoretical results.
However, research into methods of actually solving such problems is still in its initial stages.
In this paper we propose three algorithms for computing robust
efficient solutions for uncertain biobjective mixed-integer linear optimization problems.
\medskip

In order to find a good solution for an uncertain multiobjective problem, a notion of what constitutes a \emph{robust efficient} solution has to be formulated first. 
This is not trivial since there is no straightforward way to generalize the concept of Pareto optimality used in multiobjective optimization to uncertain multiobjective problems or to generalize the notion of robustness to multiobjective problems. 
Over the years, several concepts for robust multiobjective efficiency have been proposed, (see \cite{IdeSchoe13, Wieceksurvey} for surveys).
The oldest among them 
is the notion of \emph{flimsily efficient} (sometimes: \emph{possibly efficient}) and \emph{highly efficient} (sometimes: \emph{necessarily efficient}) solutions (see, e.g., \cite{Bitran80, Inuiguchi96, robipa, EngauSigler2019}) describing solutions that are efficient for at least one or for all considered scenarios, respectively.
Other notable concepts include
\emph{regret-robust efficiency}
(see \cite{Rivaz2013,XidMavHasZop2017regretMORO,GroetznerWerner22}),
\emph{multi-scenario efficiency} (see \cite{BotteSchoebel2019}),
\emph{lightly robust efficiency} (see \cite{robipa,IdeSchoe13}),
\emph{local efficiency w.r.t. the robust counterpart} (see \cite{Chuong16}), and
three different generalizations of minmax robustness to multiobjective problems
called \emph{set-based} (see \cite{EIS13}), \emph{hull-based} (see \cite{BokFred17}) and
\emph{point-based minmax efficiency}.
The latter concept has been introduced by Kuroiwa and Lee (see \cite{kurlee12}) and is used in this paper.

For point-based minmax robust efficiency, many theoretical results exist: 
Goberna, Jeyakumar, Li and Vicente-Pérez consider specific forms of data uncertainty (box data uncertainty, norm data uncertainty, ellipsoidal uncertainty) and provide deterministic reformulations (see \cite{GJLV15}). Box uncertainty with a limited sum of deviations has been considered in
\cite{Hassanzadeh13}.
In \cite{antczak2020approximate} 
necessary and sufficient conditions for robust $\epsilon$-efficient solutions for uncertain nonsmooth
multiobjective optimization problems are established, but no algorithmic method is provided.
In \cite{wei2019characterizations,wei2020unified} separation results and some characterizations of optimality are developed, and the robustness gap for point-based minmax robust efficiency has been
introduced in \cite{kruger2023point}. 
The \emph{price of robustness} has been defined in \cite{SchZhue2021}.
Point-based minmax robust efficiency has been generalized to \emph{efficiency w.r.t. to a general cone} (see \cite{Wang2015RobustCA,IKKST13}) and it has been applied to decision robustness in \cite{RegRob}. 

As general algorithmic idea, many authors suggest scalarization approaches transferring a robust multiobjective problem to a single-objective robust problem, e.g, \cite{EIS13, IKKST13, GJLV15}, but the approaches proposed in those papers
are still on an abstract level and only capable of finding some robust efficient solutions 
while in this paper we give concrete algorithms for determining a representative set of all supported
robust efficient solutions.
Other algorithmic approaches consider special cases, e.g.,
cardinality-constrained uncertainty for combinatorial problems (see \cite{RSST2018}), 
uncertain multiobjective shortest-path problems \cite{RSST2018-SPLabeling} or 
cardinality-constrained box uncertainty in the context of portfolio selection problems \cite{hassanzadeh2014robust}.

\medskip

The remainder of the paper is organized as follows. In Section~\ref{sec:preliminaries} we derive a biobjective integer linear minmax optimization problem as robust counterpart and collect other necessary preliminaries.
Section~\ref{sec:DichotomicSearch-intro} considers the problem first and foremost as a biobjective problem. The well-known dichotomic search algorithm for biobjective problems is briefly summarized before we show how it can be extended to robust problems.
The opposite approach is taken in Section~\ref{sec:OptPess-intro}, where the problem is considered from a robust optimization perspective. An optimization-pessimization approach for (single-objective) robust optimization is reviewed and then extended to multiobjective problems.

In Section~\ref{sec:Algos} we combine dichotomic search and optimization-pessimization
and receive two different methods for finding robust efficient solutions.
For the special case of a bilinear continuous objective function, we additionally develop
a dual approach together with dichotomic search in Section~\ref{sec:dualize}.
Numerical results are given in Section~\ref{sec:NumericalResults} and, finally, some conclusions are drawn and suggestions for further research are formulated in Section~\ref{sec:Conclusion}.

\section{Problem definition and preliminaries } \label{sec:preliminaries}

In this section we briefly review multiobjective robust optimization. 
We start with restating some definitions from robust optimization and multiobjective optimization which we then combine to
the emerging topic of multiobjective robust optimization. We define what a
robust efficient solution to an uncertain multi-objective problem is
and from this we derive the biobjective mixed-integer linear robust optimization problem \eqref{eq:GrundproblemBRO} --- the problem 
to be solved in this paper.
We finally recall some concepts of multiobjective optimization which are needed
later.

\paragraph{Single-objective robust optimization.}
Robust optimization deals with uncertain optimization problems, i.e., problems with
some uncertain parameters $\xi \in \R^m$ which depend on measurements, future developments, delays
or other uncertainties.
Every $\xi$ is called a \emph{scenario}.
As usual in robust optimization, we assume that the set $\cU \subseteq \R^m$ of all possible scenarios is known.
We call $\cU$ \emph{uncertainty set}.
A single-objective uncertain problem
is described by a family of parameterized optimization problems
\begin{equation} \label{eq:UncertainProblem}
\left\{
\min_{x \in \cX} h(x,\xi)
\right\}_{\xi \in \cU}
\end{equation}
with $\cX \subseteq \R^n$ and $h \colon \cX \times \cU \to \R$.

There is usually no solution that is optimal for all scenarios.
Instead one aims to find \emph{robust solutions} which are reasonably good for all (or most) scenarios.
Out of many robustness concepts that have been defined 
(see, e.g., \cite{GoeSchoe13-AE} for an overview on different robustness concepts),
minmax robustness is one of the most commonly used.
For a detailed account of the subject, we refer to \cite{RObook}.
A solution to problem \eqref{eq:UncertainProblem} is called \emph{(minmax) robust optimal}
if it is an optimal solution to its \emph{robust counterpart}
\begin{align} \label{eq:MinSupProblem}
\PsingleU{\cU}
&&
\min_{x \in \cX} \sup_{\xi \in \cU} h(x,\xi).
&&
\end{align}

\paragraph{Multiobjective (deterministic) problems.} Now let us turn to multiobjective problems
\begin{equation} \label{eq:MultiobjectiveProblem}
\min_{x \in \cX}
\underbrace{\begin{pmatrix}
g_1(x) \\ g_2(x) \\ \vdots \\ g_p(x) 
\end{pmatrix}}_{\eqqcolon g(x)}
\end{equation}
with $\cX \subseteq \R^n$ and $g_i \colon \cX \to \R$, $i=1,2,\dots,p$.

Instead of minimizing a scalar objective function, we have to compare vectors in
order to find an optimal solution. As common in multi-objective optimization, we use
the following vector relations: For two vectors $y,\bar{y} \in \R^p$,
the ordering relations $<$ and $\leq$ are meant to be component-wise. Furthermore,
if $y \leq \bar{y}$ and $y \not = \bar{y}$, we write $y \preceq \bar{y}$ and say that $y$
\emph{dominates} $\bar{y}$.
Accordingly, we define $\R_\succeq^p  \coloneqq \{r \in \R^p \colon r \succeq 0\}$ and $\R^p_\geq, \R^p_>$. 
Biobjective optimization is the special case of multiobjective optimization with $p=2$.

The most important concept for multiobjective optimization is \emph{efficiency} (also
called \emph{Pareto optimality}).
Given a multiobjective problem
\eqref{eq:MultiobjectiveProblem}
a solution $x \in \cX $ is called \emph{efficient} and its image $g(x) \in \cY \coloneqq g(\cX) \subset \R^p$ is called \emph{nondominated} if no solution $x^\prime \in \cX \setminus \{x\}$ exists, such that $g(x^\prime)$ dominates $g(x)$.
By $\cY_\textup{N}$ we denote the set of nondominated points.
These points form the \emph{Pareto frontier}.

\medskip

\paragraph{Multiobjective robust optimization.}
Real-world optimization problems often have multiple objective functions \emph{and} uncertain parameters.
We consider multiobjective uncertain optimization problems which depend on a scenario
$\xi \in \cU \subseteq \R^m$ 
\begin{align} \label{eq:ParametrizedMultiobjectiveProblem}
\textup{P}(\xi)
&& \min_{x \in \cX}
\underbrace{%
\begin{pmatrix}
f_1(x,\xi) \\ f_2(x,\xi) \\ \vdots \\ f_p(x,\xi)
\end{pmatrix}
}_{\eqqcolon f(x,\xi)}
&&
\end{align}
with $\cX \subseteq \R^n$, $f_i \colon \cX \times \cU \to \R$, $i=1,2,\dots,p$.
Analogously to single-objective optimization, one calls the parameterized family
\begin{equation} \label{eq:UncertainMultiobjectiveProblem} 
\{ P(\xi) \colon \xi \in \cU \}
\end{equation}  
an \emph{uncertain multiobjective optimization problem}. 
We are interested in finding efficient solutions to the uncertain multiobjective optimization
problem, which are robust. 

Several ways to generalize minmax robustness to multiobjective
uncertain problems have been proposed (see \cite{IdeSchoe13,Wieceksurvey}
for surveys). In this article we aim to find \emph{point-based minmax robust efficient}
solutions as defined in \cite{kurlee12,robipa}.
A solution is called \emph{point-based minmax robust efficient}
(from now on: \emph{robust efficient}) if it is an efficient solution to
\begin{align} \label{eq:Grundproblem-p-kriteriell}
P(\cU)
&&
\min_{x \in \cX}
   \begin{pmatrix}
\sup_{\xi \in \cU}f_1(x,\xi) \\ \sup_{\xi \in \cU} f_2(x,\xi) \\ \vdots \\ \sup_{\xi \in \cU} f_p(x,\xi) \\ 
\end{pmatrix}
.
&&
\end{align}
$P(\cU)$ is called the \emph{robust counterpart} of the uncertain multiobjective optimization problem~\eqref{eq:UncertainMultiobjectiveProblem} or just \emph{multiobjective robust} problem. Since $\cU$ is varied within some of the proposed algorithms, we
refer to the specific set $\cU$ in the notation $P(\cU)$.

To improve readability, we define 
$f_i^\cU(x) \coloneqq \sup_{\xi \in \cU} f_i(x,\xi)$, $i=1,2,\dots, p$
and set 
$f^\cU(x) \eqqcolon (f_1^\cU(x), f_2^\cU(x), \dots, f_p^\cU(x))^t$ as the vector containing 
the $p$ objective functions.
Problem \eqref{eq:Grundproblem-p-kriteriell} can hence be interpreted as a (deterministic) multiobjective problem of type~\eqref{eq:MultiobjectiveProblem} with $g \coloneqq f^\cU$ as objective function.
This point of view is used in Section~\ref{sec:DichotomicSearch} and in Algorithms~\ref{algo:MOA} and \ref{algo:dichotomic_with_dual}.

Finally, note that
\begin{equation} \label{eq:U'relaxation}
f^{\cU^\prime}(x) \leq f^\cU(x) \textup{ for } \cU^\prime \subseteq \cU \textup.
\end{equation}

\paragraph{The problem to be solved: (BRO) -- biobjective mixed-integer linear robust optimization.}
We consider uncertain biobjective optimization problems with $p=2$. 
Their robust counterpart is given in \eqref{eq:Grundproblem-p-kriteriell}, i.e.,
we receive the following biobjective mixed-integer linear program with minmax objective function,

\begin{align}
\textup{BRO}(\cU)
&&
\min_{x \in \cX}
  \begin{pmatrix}
   \sup_{\xi \in \cU} f_1(x,\xi)\\
   \sup_{\xi \in \cU} f_2(x,\xi)
 \end{pmatrix}
\textup{.}
&&
\tag{BRO}\label{eq:GrundproblemBRO}
\end{align}

Our goal is to determine the Pareto frontier and the associated efficient solutions of $\textup{BRO}(\cU)$.

For $\textup{BRO}(\cU)$ we always assume the following:
\begin{itemize}
\item (BRO-1) a feasible set $\cX=P \cap (\Z^{k} \times \R^{n-k})$ where $P \subseteq \R^n$ is a polytope and $0 \leq k \leq n$,
\item (BRO-2) a polytope or finite set $\cU \subseteq \R^m$, and
\item (BRO-3) functions $f_1, f_2 \colon \cX \times\cU\to \R$
    which are linear in $x$ for every fixed $\xi \in \cU$ and quasi-convex and continuous in $\xi$ for every fixed $x \in \cX$.
\end{itemize}

Under the latter two assumptions, (BRO-2) and (BRO-3), the supremum in the definition of \eqref{eq:GrundproblemBRO} is always attained and we can write maximum instead, i.e., 
$f^\cU_i(x)=\max_{\xi \in \cU} f_i(x,\xi)$ for $x\in\cX$, $i=1,2$.
(BRO-3) guarantees that $f_i \colon \cX \times \cU \to \R$ is jointly continuous in $(x,\xi)$ (see, e.g., \cite{kruse1969joint}). 
Finally, the feasible set $\cX$ determines the type of the problem at hand:
For $k=0$ the problem is a (pure) \emph{linear} minmax problem,
for $k=n$ the problem is an \emph{integer linear} minmax problem and 
for $1 \leq k <n$ we have a \emph{mixed-integer linear} minmax problem.

\paragraph{Concepts from multi-objective optimization.}
We recall some concepts from multiobjective optimization which we need in this paper.
Consider the deterministic multiobjective problem \eqref{eq:MultiobjectiveProblem}.
We first define two special types of efficient solutions, namely \emph{supported efficient} and \emph{extreme supported efficient} solutions.
There exist slightly different characterizations of these solutions.
We use the definitions of Özpeynirci and Köksalan (see \cite{ozpeynirci2010exact}) and call a point $y\in \cY$ 
\emph{extreme supported nondominated}, if there is no convex combination of
nondominated points $y^{(1)},y^{(2)},\dots,y^{(n)} \in \cY \setminus \{y\}$ such that
$\sum_{i=1}^n \lambda_i y^{(i)} \leq y$.
We call a point \emph{supported nondominated}, if there is no convex combination of nondominated points $y^{(1)},y^{(2)},\dots,y^{(n)} \in \cY \setminus \{y\}$ such that $\sum_{i=1}^n \lambda_i y^{(i)} < y$.
A solution $x \in \cX$ is called \emph{(extreme) supported efficient}, if $y=g(x)$ is (extreme) supported nondominated.
A supported efficient solution can be found by solving the scalarized weighted-sum problem
\begin{align*}
P(\lambda)
&&
\min_{x \in \cX} \lambda_1 g_1(x) + \lambda_2 g_2(x) + \dots + \lambda_p g_p(x)
&&
\end{align*}
for some weight vector $\lambda\in \R^p_{\succeq 0}$.
We use $\cY_{\textup{ESN}}$ to denote the set of extreme supported nondominated points.
Any subset of $\cX$ whose image under $g$ is the set of nondominated points $\cY_{\textup{N}}$ is called a \emph{representative set}; a set whose image under $g$ equals the set of extreme supported nondominated points $\cY_{\textup{ESN}}$ is called  \emph{representative set for the extreme supported efficient solutions}.
\medskip

In the following we state two properties that are essential to prove some of our results.
The first is the existence of the \emph{ideal point}
\begin{equation*}
y^\text{I} \coloneqq
\begin{pmatrix}
\min_{x \in \cX} g_1(x) \\
\min_{x \in \cX} g_2(x) \\
\vdots \\
\min_{x \in \cX} g_p(x) 
\end{pmatrix}
\end{equation*}
for \eqref{eq:MultiobjectiveProblem}.
We say that the \emph{ideal point property} is
satisfied if an ideal point exists, i.e.,
\leqnomode
\begin{equation} \label{eq:ideal-point-existent} \tag{ideal}
\min_{x \in \cX} g_i(x) \textup{ exists for }i=1,2,\dots,p.
\end{equation}
\reqnomode

The second property we need is
the \emph{domination property} (see \cite{Henig1986}).
\leqnomode
\begin{equation} \label{eq:domination-property} \tag{dom}
\text{For all $y\in\cY \setminus \cY_\textup{N}$, there exists a point $y^\prime \in \cY_\textup{N}$ with $y^\prime \preceq y$.}
\end{equation}
\reqnomode

The following result is well known.
\begin{lemma}
\label{lem:DomProperty-IdealPoint}  
Let a multiobjective problem \eqref{eq:MultiobjectiveProblem} be given.
If $\cX$ is finite, or if $\cX$ is compact and $g$ is continuous, then
both, \eqref{eq:ideal-point-existent} and \eqref{eq:domination-property} hold.
\end{lemma}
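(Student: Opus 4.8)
The plan is to establish the two properties separately, handling the two hypotheses (``$\cX$ finite'' and ``$\cX$ compact, $g$ continuous'') in parallel wherever possible.

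First I would dispose of the ideal point property \eqref{eq:ideal-point-existent}. If $\cX$ is finite, then each image set $g_i(\cX)$ is a finite, nonempty subset of $\R$ and hence has a minimum. If $\cX$ is compact and $g$ is continuous, then each component $g_i$ is a continuous real-valued function on a compact set and attains its minimum by Weierstrass' theorem. In either case $\min_{x\in\cX} g_i(x)$ exists for $i=1,\dots,p$, so \eqref{eq:ideal-point-existent} holds.

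For the domination property \eqref{eq:domination-property} the key device is, for a given $y\in\cY$, the lower section $S_y \coloneqq \{x\in\cX : g(x)\le y\}$ together with the equal-weight scalarization $\min_{x\in S_y}\sum_{i=1}^{p} g_i(x)$, i.e., $P(\lambda)$ with $\lambda=(1,\dots,1)$ restricted to $S_y$. This section is nonempty (it contains any $x$ with $g(x)=y$), and the scalar problem attains its optimum: if $\cX$ is finite this is immediate, and if $\cX$ is compact with $g$ continuous then $S_y = \cX \cap g^{-1}(y-\R^p_\geq)$ is a closed subset of the compact set $\cX$, hence compact, so the continuous function $\sum_i g_i$ attains its minimum on $S_y$. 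Let $x^\ast$ be such a minimizer. I claim $g(x^\ast)\in\cY_\textup{N}$: if some $x'\in\cX$ satisfied $g(x')\preceq g(x^\ast)$, then $g(x')\le g(x^\ast)\le y$ would put $x'\in S_y$, while $g(x')\preceq g(x^\ast)$ means componentwise $\le$ with at least one strict inequality, forcing $\sum_i g_i(x') < \sum_i g_i(x^\ast)$ and contradicting the choice of $x^\ast$. Finally, if $y\in\cY\setminus\cY_\textup{N}$, then $g(x^\ast)\neq y$ (otherwise $y=g(x^\ast)$ would be nondominated), so $y' \coloneqq g(x^\ast)$ satisfies $y'\preceq y$ and $y'\in\cY_\textup{N}$, which is exactly \eqref{eq:domination-property}.

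The only delicate point in the argument is the compact case of \eqref{eq:domination-property}: one must observe that it is the lower section $S_y$, and not merely $\cX$ itself, that needs to be compact for Weierstrass to apply, and that the step from a dominating point to a strictly smaller value of the weighted sum genuinely exploits the strictness encoded in $\preceq$. The finite case is entirely elementary; there one could alternatively just take a $\preceq$-minimal element of the finite set of points dominating $y$, but the scalarization argument above covers both situations at once and, as a by-product, reproves the standard fact that a minimizer of a positively weighted sum (here over $S_y$) is efficient.
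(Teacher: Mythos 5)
Your proof is correct. For \eqref{eq:ideal-point-existent} you argue exactly as the paper does (finiteness, respectively Weierstrass on the compact set $\cX$). For \eqref{eq:domination-property}, however, the paper gives no argument at all and simply refers to Henig's paper, whereas you supply a self-contained proof via the lower section $S_y=\{x\in\cX\colon g(x)\le y\}$ and the equal-weight scalarization $\min_{x\in S_y}\sum_i g_i(x)$: you correctly note that $S_y$ is nonempty, finite respectively compact, that a minimizer $x^\ast$ has nondominated image (any dominating point would lie in $S_y$ and strictly decrease the weighted sum), and that $g(x^\ast)\preceq y$ whenever $y\notin\cY_\textup{N}$. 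This is a standard but genuinely complete argument; what it buys over the paper's citation is self-containedness, a single uniform treatment of the finite and compact cases, and an explicit identification of the delicate point (it is the section $S_y$, not just $\cX$, on which Weierstrass must be invoked, together with the strictness in $\preceq$). The only cosmetic remark is that your claim ``$g(x^\ast)\in\cY_\textup{N}$'' implicitly uses that any $x'$ with $g(x')\preceq g(x^\ast)$ automatically satisfies $x'\neq x^\ast$, which is immediate since $\preceq$ forbids equality of images, so nothing is missing.
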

\begin{proof}
For \eqref{eq:ideal-point-existent} this is due to Weierstrass' Extreme Value Theorem,  for \eqref{eq:domination-property} we refer to \cite{Henig1986}.
\end{proof}


\paragraph{Domination and ideal point property for multiobjective robust optimization problems.}
We conclude this section by discussing under which assumptions 
\eqref{eq:ideal-point-existent} and \eqref{eq:domination-property} 
are satisfied for \emph{robust} multiobjective problems (see \eqref{eq:Grundproblem-p-kriteriell}), 
i.e., for the case that the objective functions of \eqref{eq:MultiobjectiveProblem} are given as $g=f^\cU$.
For a discussion of \eqref{eq:domination-property} in the context of multiobjective robust optimization, see also \cite{SchZhue2021}.

\begin{thm}\label{thm:DomProperty-IdealPoint}
Let either
\begin{enumerate}[label=(\roman*)]
\item $\cX$ and $\cU$ be compact and $f$ jointly continuous in $\cX$ and $\cU$,
\item $\cX$ be finite, $\cU$ compact and $f$ continuous in $\cU$ for every fixed $x \in \cX$,
\item $\cU$ be finite, $\cX$ compact and $f$ continuous in $\cX$ for every fixed $\xi \in \cU$, or
\item $\cX$ and $\cU$ both be finite.
\end{enumerate}
Then both, \eqref{eq:domination-property} and \eqref{eq:ideal-point-existent}
are satisfied for a
multiobjective robust optimization problem \eqref{eq:MultiobjectiveProblem}.
\end{thm}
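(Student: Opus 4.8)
The plan is to reduce all four cases to the already-established Lemma~\ref{lem:DomProperty-IdealPoint}, applied to the \emph{deterministic} multiobjective problem \eqref{eq:MultiobjectiveProblem} whose objective is $g \coloneqq f^\cU$ with components $g_i(x) = f^\cU_i(x) = \sup_{\xi \in \cU} f_i(x,\xi)$. For this reduction to make sense I first have to check, in each case, that every $f^\cU_i$ is in fact real-valued (so the supremum is attained and may be written as a maximum), and then I need to verify one of the two hypotheses of Lemma~\ref{lem:DomProperty-IdealPoint}: either $\cX$ is finite, or $\cX$ is compact and $g = f^\cU$ is continuous on $\cX$. Once this is done, Lemma~\ref{lem:DomProperty-IdealPoint} immediately yields both \eqref{eq:ideal-point-existent} and \eqref{eq:domination-property}.

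First I would dispose of the cases in which $\cX$ is finite, namely (ii) and (iv). In case (iv) the uncertainty set $\cU$ is finite, so $f^\cU_i(x) = \max_{\xi \in \cU} f_i(x,\xi)$ is a maximum over finitely many real numbers and hence well-defined; in case (ii), $\cU$ is compact and $f_i(x,\cdot)$ is continuous for each fixed $x$, so by Weierstrass' Extreme Value Theorem the supremum is again attained and $f^\cU_i(x) \in \R$. In both cases $\cX$ is finite, so Lemma~\ref{lem:DomProperty-IdealPoint} applies to $\min_{x \in \cX} f^\cU(x)$ and we are done.

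It remains to treat the cases (i) and (iii), in which $\cX$ is compact; here the task is to show that $f^\cU$ is real-valued and continuous on $\cX$. In case (iii), $\cU$ is finite, so $f^\cU_i = \max_{\xi \in \cU} f_i(\cdot,\xi)$ is a pointwise maximum of finitely many functions continuous on $\cX$, hence itself real-valued and continuous on $\cX$. In case (i), $f_i$ is jointly continuous on the compact set $\cX \times \cU$, so for each fixed $x$ the supremum over the compact set $\cU$ is attained (Weierstrass), giving $f^\cU_i(x) \in \R$; moreover $f_i$ is \emph{uniformly} continuous on $\cX \times \cU$, so given $\epsilon > 0$ there is $\delta > 0$ with $|f_i(x,\xi) - f_i(x',\xi)| < \epsilon$ whenever $\|x - x'\| < \delta$, uniformly in $\xi \in \cU$; taking the maximum over $\xi \in \cU$ on both sides shows $|f^\cU_i(x) - f^\cU_i(x')| \le \epsilon$, i.e., $f^\cU_i$ is (uniformly) continuous on $\cX$. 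In either case $g = f^\cU$ is continuous on the compact set $\cX$, so again Lemma~\ref{lem:DomProperty-IdealPoint} applies, which completes the proof.

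The only nonroutine point is the continuity of the max-value function $x \mapsto \sup_{\xi \in \cU} f_i(x,\xi)$ in case (i); I expect this to be the main obstacle, and the argument above via uniform continuity on the compact product $\cX \times \cU$ is the clean way around it (alternatively one could invoke Berge's maximum theorem, but the constant constraint correspondence $x \mapsto \cU$ makes the direct estimate simpler). Everything else is bookkeeping about when the defining supremum is attained.
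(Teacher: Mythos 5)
Your proof is correct and follows essentially the same route as the paper's: reduce to Lemma~\ref{lem:DomProperty-IdealPoint} with $g=f^{\cU}$, splitting into the cases $\cX$ finite (where one only needs the suprema to be attained) and $\cX$ compact (where one needs continuity of $f^{\cU}$). The only difference is that you spell out, via uniform continuity on the compact product $\cX\times\cU$, the continuity of $x\mapsto\max_{\xi\in\cU}f_i(x,\xi)$ in case (i), which the paper simply asserts.
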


\begin{proof}
  We set $g_i(x)\coloneqq\sup_{\xi \in \cU} f_i(x,\xi)$, $i=1,2,\dots,p$, and distinguish two cases: 
\begin{itemize}
\item[(a)]
$\cX$ is finite:
Due to Lemma~\ref{lem:DomProperty-IdealPoint}, \eqref{eq:domination-property} and \eqref{eq:ideal-point-existent} hold if $g_i(x)=\sup_{\xi \in \cU} f_i(x,\xi)$ exists for all $x \in \cX$.
This is the case since either $\cU$ is finite or $\cU$ is compact and $f_i(x,\cdot)$ continuous for every fixed $x \in \cX$.
\item[(b)]
$\cX$ is compact:
In this case, Lemma~\ref{lem:DomProperty-IdealPoint} requires that $g_i(x)$ is continuous.
This holds since
\begin{itemize}
\item either $\cU$ is finite, hence $g(x)$ is continuous as the maximum of a finite set of continuous functions $f(\cdot,\xi)$, $\xi \in \cU$,
\item or $\cU$ is compact and $f$ is jointly continuous in $(x,\xi)$ and hence again, $g_i(x)$ is continuous.
\end{itemize}
\end{itemize} 
\end{proof}

We conclude that \eqref{eq:domination-property} and \eqref{eq:ideal-point-existent} hold for \eqref{eq:GrundproblemBRO}.

\begin{corol} \label{cor:PropertiesForBRO}
\eqref{eq:GrundproblemBRO} satisfies both, \eqref{eq:domination-property} and \eqref{eq:ideal-point-existent}.
\end{corol}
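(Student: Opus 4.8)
The plan is to verify that \eqref{eq:GrundproblemBRO} falls into one of the four cases of Theorem~\ref{thm:DomProperty-IdealPoint}, so that the corollary follows immediately as a special instance. Concretely, I would recall the standing assumptions (BRO-1), (BRO-2) and (BRO-3): by (BRO-1) the feasible set is $\cX = P \cap (\Z^k \times \R^{n-k})$ with $P$ a polytope, by (BRO-2) the uncertainty set $\cU$ is a polytope or a finite set, and by (BRO-3) each $f_i$ is linear (hence continuous) in $x$ for fixed $\xi$ and quasi-convex and continuous in $\xi$ for fixed $x$. The point is that each of these assumptions restricts $\cX$ and $\cU$ enough to land in Theorem~\ref{thm:DomProperty-IdealPoint}.

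First I would dispose of the uncertainty set: a polytope is compact, and a finite set is finite, so in either case $\cU$ is compact (finiteness implies compactness). Second, for the feasible set I would argue that $\cX$ is either compact or finite. If $k=0$, then $\cX = P$ is a polytope, hence compact and $f$ is continuous on $\cX$ for every fixed $\xi$ (being linear in $x$). If $k \geq 1$, then $\cX = P \cap (\Z^k \times \R^{n-k})$; since $P$ is a polytope it is bounded, so it contains only finitely many points whose first $k$ coordinates are integral when $k=n$, giving a finite $\cX$, and when $1 \leq k < n$ the set $\cX$ is a finite union of polytopes (one for each admissible integral part of the first $k$ coordinates), hence still compact. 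In all cases $\cX$ is compact (and finite when $k=n$), and by the joint continuity statement already established in the excerpt — (BRO-3) guarantees that $f_i$ is jointly continuous in $(x,\xi)$, citing \cite{kruse1969joint} — the hypotheses of case (i) of Theorem~\ref{thm:DomProperty-IdealPoint} are met (with case (iii) or (iv) available as alternatives when $\cU$ is finite). Therefore both \eqref{eq:domination-property} and \eqref{eq:ideal-point-existent} hold for \eqref{eq:GrundproblemBRO}.

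There is essentially no hard part here; the corollary is a bookkeeping consequence of the theorem together with the structural assumptions on $\cX$ and $\cU$. The one place that deserves a sentence of care is the claim that $\cX$ is compact in the genuinely mixed-integer case $1 \leq k < n$: one must observe that boundedness of $P$ forces the integral coordinates to range over a finite set, so $\cX$ decomposes into finitely many closed, bounded polyhedral pieces and is therefore closed and bounded, hence compact. Once that observation is in place, the rest is immediate: invoke Theorem~\ref{thm:DomProperty-IdealPoint}(i) (or (iii)/(iv) if $\cU$ is finite) with $g = f^\cU$, exactly as the text already anticipates in the sentence preceding the corollary.
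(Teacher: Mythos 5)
Your proposal is correct and follows the same route as the paper: both reduce the corollary to Theorem~\ref{thm:DomProperty-IdealPoint} by noting that (BRO-1), (BRO-2) yield compactness of $\cX$ and $\cU$ and (BRO-3) yields joint continuity of $f_1,f_2$, then invoke case (i). Your extra care in verifying compactness of $\cX$ in the mixed-integer case (boundedness of $P$ forcing finitely many integral parts) is a detail the paper leaves implicit, but it is the same argument, just spelled out.
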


\begin{proof}
By the assumptions of \eqref{eq:GrundproblemBRO}, $\cX$ and $\cU$ are both compact (see (BRO-1) and (BRO-2)) and $f_i:\cX \times \cU \to \R$ is jointly continuous in $(x,\xi)$ for $i=1,2$ (BRO-3). Theorem~\ref{thm:DomProperty-IdealPoint} hence gives the result.
\end{proof}

\section{Dichotomic search for biobjective minmax optimization} \label{sec:DichotomicSearch}

In this section we view our problem \eqref{eq:GrundproblemBRO}
as a deterministic biobjective (mixed-integer) linear minmax problem.
First, in Section~\ref{sec:DichotomicSearch-intro},
we repeat dichotomic search from literature.
In Section~\ref{sec:DichotomicSearch-minmax} we generalize this method 
from biobjective mixed-integer linear optimization
to biobjective mixed-integer linear \emph{minmax} optimization,
i.e., to problems of type \eqref{eq:GrundproblemBRO}.

\subsection{Dichotomic search for biobjective mixed-integer linear optimization} \label{sec:DichotomicSearch-intro}

We consider a special case of \eqref{eq:MultiobjectiveProblem}, namely biobjective linear mixed-integer optimization problems,
\begin{equation} \label{eq:bioobjective-linear-mip}
\min_{x \in \cX}
\begin{pmatrix} g_1(x) \\ g_2(x) \end{pmatrix}
\text.
\end{equation}

The feasible set $\cX \subseteq \R^n$ is a polyhedron and as in \eqref{eq:GrundproblemBRO} it is intersected with $\Z^k \times \R^{n-k}$.
The objective functions $g_1,g_2 \colon\cX \to \R$ are linear functions.
\medskip

A well-known approach to solve such problems is \emph{dichotomic search},
formulated in Algorithm~\ref{algo:DichotomicSearch}.
The method has first been published by Aneja and Nair in 1979 (see \cite{AnejaNair79}) and Cohon (see \cite{cohon1978multiobjective}) for more specific problem classes and is now part of multi-objective folklore and sometimes also known as \emph{Aneja and Nair's bicriteria method} (e.g., \cite{UlunguSurvey94}) or \emph{CAN method} (e.g., \cite{ozpeynirci2010exact}).
Most frequently, it is used to solve biobjective \emph{linear} problems.
However, it can also be applied to
biobjective \emph{mixed-integer linear} problems where it 
determines all
extreme supported efficient nondominated points $Y^\ast$ and a representative set of extreme supported nondominated solutions $X^\ast$.
Dichotomic search takes advantage of the fact that in $\R^2$ sorting nondominated solutions with respect to their first coordinates is the same as reverse sorting by the second coordinate, i.e., for two nondominated solutions $y^l, y^r \in \cY \subset \R^2$, $y^l_1 < y^r_1$ implies $y^r_2 > y^l_2$. 
The idea is to start with the lexicographically optimal solutions and then in each step find a supported non-dominated point ``between'' two given supported non-dominated points. The method proceeds iteratively until all extreme supported nondominated points are identified.
Algorithmically, first, the lexicographic optimal solutions $x^L,x^R$ for \eqref{eq:bioobjective-linear-mip} are computed.
After that, in each iteration, a tuple $(y^l, y^r)$ of two points known to be supported nondominated is taken and $\lambda=(y^l_2-y^r_2,y^r_1-y^l_1)$, corresponding to the slope $\frac{y^l_2-y^r_2}{y^r_1-y^l_1}$ of the line segment from $y^l$ to $y^r$, is chosen. 
Solving the corresponding weighted-sum (scalarized) problem
\[
\min_{x \in \cX} \lambda^t g(x)
\]
either finds a new supported nondominated point between $y^l$ and $y^r$ or certifies that there is no such point. 
The algorithm terminates when all extreme supported nondominated points
-- each with a corresponding extreme supported efficient solution --
have been discovered.
It might find also supported nondominated points which are not extreme supported nondominated,
but these can be easily identified and removed.
  

\begin{algorithm}[hbt!]
\small
	\begin{algorithmic}[1] 
	\REQUIRE{Biobjective mixed-integer linear optimization problem \eqref{eq:bioobjective-linear-mip}.}
	\ENSURE{Feasible set $\cX$ is a polyhedron intersected with $\R^{n-k} \times \Z^k$ for some $k \in \{0,\ldots,n\}$.}
	\STATE{Initialize $\mathcal{L} \coloneqq \emptyset$.}     \COMMENT{$\mathcal{L}$ will contain list of tuple images $(y^l,y^r)$ satisfying $y^l_1 < y^r_1, y^l_2 > y^r_2$}

	\tikzmark{ds-lexicographic-start}
	
	\STATE{Compute $\epsilon_1 \coloneqq  \min_{x \in \cX} g_1(x)$.}
	\STATE{Determine $x^L \in \argmin_{x \in \cX} \{ g_2(x) \colon g_1(x) \leq \epsilon_1 \}$.}
	\STATE{Set $y^L \coloneqq g(x^L) $.}  
	\STATE{Compute $\epsilon_2 \coloneqq  \min_{x \in \cX} g_2(x)$.}
	\STATE{Determine $x^R \in \argmin_{x \in \cX} \{ g_1(x) \colon g_2(x) \leq \epsilon_2 \}$.}
	\STATE{Set $y^R \coloneqq g(x^R)$.}

	\hfill~\tikzmark{ds-lexicographic-end}

	\IF{$y^L=y^R$}
		\STATE{STOP. Only one nondominated image found.}
		\RETURN{$Y^\ast = \{y^L\}, X^\ast = \{x^L\}$.}
	\ELSE
		\STATE{$Y^\ast = \{y^L,y^R\}, X^\ast = \{x^L,x^R\}, \mathcal{L}=\{(y^L,y^R)\}$.}
	\ENDIF
	\WHILE{$L \not = \emptyset $}
		\STATE{Remove element $(y^l,y^r)$ from $\mathcal{L}$.}
		\STATE{Compute $\lambda \coloneqq (y^l_2-y^r_2,y^r_1-y^l_1)$.}
		
		\tikzmark{ds-lambda-start}
		
		\STATE{Determine $x^\ast \in \argmin_{x\in\cX} \lambda^T g(x)$.}
		\STATE{Set $y^\ast \coloneqq g(x^\ast)$.}
		
		\hfill~\tikzmark{ds-lambda-end}
		
		\IF{$\lambda^T y^\ast < \lambda^T y^l$.}
			\STATE{Add $y^\ast$ to $Y^\ast$, add $x^\ast$ to $X^\ast$.}
			\STATE{Add $(y^l,y^\ast),(y^\ast,y^r)$ to $\mathcal{L}$.}
		\ENDIF
	\ENDWHILE
	\RETURN{$Y^\ast$: contains all extreme supported nondominated points.}
	\RETURN{$X^\ast$: contains a representative set of extreme supported efficient solutions.}
\end{algorithmic}
\Textboxred{ds-lexicographic-start}{ds-lexicographic-end}{Determine lexicographic solutions}
\Textboxred{ds-lambda-start}{ds-lambda-end}{Solve weighted-sum problem for weights $\lambda$}
\caption{Dichotomic search} \label{algo:DichotomicSearch}
\end{algorithm}


Finiteness and correctness of Algorithm~\ref{algo:DichotomicSearch} follow from the considerations above which are derived from the literature (e.g., \cite{przybylski2019simple,ozpeynirci2010exact}) 
and are stated in the following lemma.
The lemma is valid if \eqref{eq:bioobjective-linear-mip} satisfies \eqref{eq:ideal-point-existent}.
This is a slight generalization to \cite{ozpeynirci2010exact} who assumed that \eqref{eq:bioobjective-linear-mip} is bounded by the origin, i.e., $g_i(x)$, $i=1,2$, are non-negative for all $x\in \cX$.

\begin{lemma}[e.g., \cite{ozpeynirci2010exact}] \label{lem:Dichotomic}
Let a biobjective problem as in \eqref{eq:bioobjective-linear-mip} be given, i.e., 
\begin{itemize}
\item with linear objectives $g_1$, $g_2$ and
\item a feasible set $\cX$ that is a polyhedron intersected with $\R^{n-k} \times \Z^k$.
\item Furthermore, let \eqref{eq:ideal-point-existent} hold for
  \eqref{eq:bioobjective-linear-mip}.
\end{itemize}  
Then Algorithm~\ref{algo:DichotomicSearch} returns a set $Y^\ast$ containing all extreme supported nondominated
points and a set $X^\ast$ containing a representative set of extreme supported efficient solutions
after $2|Y^\ast| - 3$ iterations (lines 15--22) if $|Y^\ast| > 2$ and zero iterations if $|Y^\ast|=1$.
\end{lemma}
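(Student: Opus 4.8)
The plan is to prove correctness by exhibiting an invariant for the loop in lines~15--22 and then to read off the iteration count by bookkeeping on $\mathcal L$; the statement is essentially classical, so at the technical heart I would be willing to lean on the cited references. I would first handle the lexicographic phase: by \eqref{eq:ideal-point-existent} the values $\epsilon_1,\epsilon_2$ and hence $x^L,x^R$ exist, and $y^L,y^R$ are the lexicographically smallest nondominated points for the orders $(g_1,g_2)$ and $(g_2,g_1)$. A one-line argument shows they are extreme supported nondominated: every point of $\cY_{\textup{N}}\setminus\{y^L\}$ has first component $\ge y^L_1=\min_{x\in\cX}g_1(x)$, with equality forcing a strictly larger second component, so no convex combination of such points can be $\le y^L$; symmetrically for $y^R$. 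If $y^L=y^R$, this point is both $g_1$- and $g_2$-minimal, hence the attained ideal point, $\cY_{\textup{N}}=\cY_{\textup{ESN}}=\{y^L\}$, and the early return is correct with zero iterations; otherwise one checks $y^L_1<y^R_1$ and $y^L_2>y^R_2$, so the loop is entered in the state described next.

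The invariant I would carry at the top of each iteration is: $Y^\ast$ is a finite set of supported nondominated points, sorted increasingly in the first coordinate (equivalently decreasingly in the second), with $y^L$ and $y^R$ as endpoints and each point a vertex of the upper image $\conv(Y^\ast)+\R^2_\geq$; $X^\ast$ holds an efficient $g$-preimage of every element of $Y^\ast$; and $\mathcal L$ is a set of consecutive pairs $(y^{(j)},y^{(j+1)})$ such that for every consecutive pair \emph{not} in $\mathcal L$ there is no extreme supported nondominated point with first coordinate strictly between $y^{(j)}_1$ and $y^{(j+1)}_1$. In particular every extreme supported nondominated point is then either already in $Y^\ast$ or lies (in its first coordinate) strictly inside some active pair. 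The geometric fact making the test in line~20 meaningful is: for a consecutive pair $y^l=y^{(j)}$, $y^r=y^{(j+1)}$ the weight $\lambda=(y^l_2-y^r_2,\,y^r_1-y^l_1)$ from line~17 is a strictly positive outer normal of the edge $[y^l,y^r]$ of $\conv(Y^\ast)+\R^2_\geq$, so $\lambda^t y^l=\lambda^t y^r$; and since each element of $Y^\ast$ is \emph{supported} nondominated, it lies on the lower-left boundary of $\conv(\cY)+\R^2_\geq$, which is a convex curve, so the line $\{y:\lambda^t y=\lambda^t y^l\}$ lies weakly below that boundary outside the $g_1$-interval $[y^l_1,y^r_1]$. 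Consequently $\lambda^t y<\lambda^t y^l$ can hold for $y\in\cY$ only if $y^l_1<y_1<y^r_1$.

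Preservation then splits along line~20. If $\lambda^t y^\ast\ge\lambda^t y^l$: any nondominated $y$ with $y^l_1<y_1<y^r_1$ lies on or above the segment $[y^l,y^r]$, hence $\mu y^l+(1-\mu)y^r\le y$ for the $\mu\in(0,1)$ matching the first coordinate; since $y\notin\{y^l,y^r\}$ this shows $y$ is not extreme supported, so closing $(y^l,y^r)$ keeps the invariant. If $\lambda^t y^\ast<\lambda^t y^l$: $x^\ast$ minimizes a strictly positive weighted sum, so $y^\ast$ is (genuinely) supported nondominated; by the separation fact $y^l_1<y^\ast_1<y^r_1$, and then $y^r_2<y^\ast_2<y^l_2$ because $y^\ast,y^l,y^r$ are nondominated; so $y^\ast\notin Y^\ast$, it is a vertex of the enlarged upper image inserted between $y^l$ and $y^r$, and replacing $(y^l,y^r)$ by $(y^l,y^\ast),(y^\ast,y^r)$ restores the invariant. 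The same ``on or above segment'' computation also gives completeness: if an undiscovered extreme supported nondominated $\bar y$ lies strictly between an active pair being processed, then $\bar y$ is strictly below $[y^l,y^r]$, so $\min_{x\in\cX}\lambda^t g(x)\le\lambda^t\bar y<\lambda^t y^l$ and the adding branch is taken; since each addition strictly shrinks the interval containing $\bar y$ and there are only finitely many extreme supported nondominated points (the vertices of the polyhedron $\conv(\cY)+\R^2_\geq$), $\bar y$ is discovered after finitely many steps.

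When the loop stops, $\mathcal L=\emptyset$, so by the invariant no extreme supported nondominated point lies strictly between consecutive elements of $Y^\ast$; since all nondominated points have first coordinate in $[y^L_1,y^R_1]$ and no two distinct nondominated points share a first coordinate, $\cY_{\textup{ESN}}\subseteq Y^\ast$, and $X^\ast$ contains an efficient preimage of each, i.e.\ a representative set of extreme supported efficient solutions. For the count, each iteration removes exactly one pair from $\mathcal L$, the loop starts with one pair, and every adding iteration replaces one pair by two; the loop halts precisely when all created pairs have been removed, so after $a$ adding iterations there have been $1+2a$ iterations, and since only adding iterations enlarge $Y^\ast$ from $2$ to its final size we get $a=|Y^\ast|-2$ and hence $2|Y^\ast|-3$ iterations for $|Y^\ast|\ge 2$ (in particular for $|Y^\ast|>2$), while $|Y^\ast|=1$ is the early return with no iteration. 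I expect the main obstacle to be the separation fact of the second paragraph --- that a strictly improving weighted-sum minimizer really lands strictly inside $(y^l_1,y^r_1)$ rather than on the left or right extension of the edge; making this fully rigorous requires the convexity of the lower-left boundary of $\conv(\cY)+\R^2_\geq$ together with the fact that the members of $Y^\ast$ lie on it, and it is exactly here that I would invoke the standard literature (\cite{AnejaNair79,ozpeynirci2010exact,przybylski2019simple}).
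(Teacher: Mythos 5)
Your proposal is sound in substance, but note that it is doing genuinely more than the paper does: the paper gives no proof of Lemma~\ref{lem:Dichotomic} at all -- it is stated as a known result attributed to \cite{ozpeynirci2010exact} (see also \cite{przybylski2019simple}), with only the remark that assuming \eqref{eq:ideal-point-existent} is a slight generalization of the boundedness hypothesis used there. Your route -- lexicographic points are extreme supported nondominated; active pairs have pairwise disjoint open first-coordinate intervals containing no undiscovered region except what $\mathcal{L}$ records; the chord normal $\lambda$ is strictly positive with $\lambda^t y^l=\lambda^t y^r$; a strictly improving weighted-sum optimum lands strictly between the pair, while a non-improving outcome certifies that every nondominated point strictly between is weakly dominated by a nontrivial convex combination of $y^l,y^r$ and hence not extreme supported; bookkeeping on $\mathcal{L}$ gives $1+2a=2|Y^\ast|-3$ iterations -- is exactly the classical analysis the cited papers carry out, so the comparison is: the paper buys brevity by citation, you buy a self-contained argument. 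Your ``separation fact'' is fine and needs no external reference: the supported part of the frontier is the graph of a convex nonincreasing function of the first coordinate, and $y^l,y^r$ lie on that graph, so the chord lies weakly below it outside $[y^l_1,y^r_1]$.

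Two spots should be tightened. First, finiteness of $\cY_{\textup{ESN}}$ (your parenthetical ``vertices of the polyhedron $\conv(\cY)+\R^2_\geq$'') is not automatic for mixed-integer feasible sets; it needs a Meyer-type polyhedrality statement (unproblematic under (BRO-1), where $P$ is a polytope, but it should be said). Second, and more substantively, your termination argument only shows that each fixed extreme supported nondominated point is eventually found; it does not bound the total number of additions, because your invariant only makes each added $y^\ast$ a vertex of $\conv(Y^\ast)+\R^2_\geq$, not of the true upper image. If the weighted-sum solver returns an optimum in the relative interior of a frontier edge, a supported but non-extreme point is added -- a possibility the paper itself acknowledges right after Algorithm~\ref{algo:DichotomicSearch}. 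The count $2|Y^\ast|-3$ is unaffected, but finiteness needs either the assumption that the solver returns extreme optima (as in the paper's implementation, which requests basic solutions), or the additional observation that, by the disjoint-interval invariant, no previously found point ever lies strictly inside an active pair, so each vertex and each edge of the true upper image can contribute at most one added point, bounding the number of additions by the (finite) number of vertices plus edges.
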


It is known that
in the case of biobjective linear optimization problems, 
the set of all extreme supported nondominated points and
a representative set of extreme supported efficient solutions
can be used to construct all nondominated points
and a representative set of efficient solutions,
respectively.
We will show a related result for \eqref{eq:GrundproblemBRO} in Lemma~\ref{lem:Construct_Paretofront_from_ESN-Points} in Section~\ref{sec:Algos}.

\subsection{Dichotomic search for biobjective mixed-integer linear \emph{minmax} optimization} \label{sec:DichotomicSearch-minmax}

Our goal is to apply dichotomic search to \eqref{eq:GrundproblemBRO}, i.e., to a biobjective
mixed-integer linear robust optimization problem which is given as the minmax problem introduced
in Section~\ref{sec:preliminaries}

\begin{align*} \tag{\ref{eq:GrundproblemBRO} revisited}
\textup{BRO}(\cU)
&&  
\min_{x \in \cX}
  \begin{pmatrix}
   \sup_{\xi \in \cU} f_1(x,\xi)\\
   \sup_{\xi \in \cU} f_2(x,\xi)
 \end{pmatrix}
&&
\end{align*}
Recall that the functions $f_1$ and $f_2$ are linear in $x$ for every fixed $\xi \in \cU$
and $\cX=P \cap (\Z^{k} \times \R^{n-k})$ for a polyhedron $P$ and $0 \leq k \leq n$, i.e., without the supremum \eqref{eq:GrundproblemBRO} would satisfy the requirements of Lemma~\ref{lem:Dichotomic}.
However, since the functions $f_i^\cU \colon \cX \to \R$, $x \mapsto \sup_{\xi \in \cU} f_i(x,\xi)$, $i=1,2$, are not linear, 
we aim to transform \eqref{eq:GrundproblemBRO} 
to a biobjective mixed-integer linear optimization problem,
i.e., to a problem of type \eqref{eq:bioobjective-linear-mip} for which we can
  apply dichotomic search.

We proceed in two steps. The first step is to transform \eqref{eq:GrundproblemBRO} to its bottleneck
version, i.e., to 
\begin{align*}
\textup{BRO}_{\textup{BN}}(\cU) 
&& 
\min \begin{pmatrix} y_1 \\ y_2\end{pmatrix}
&& \\
&&\text{s.t. }
y_1 &\geq f_1(x,\xi) 
& \; \forall \xi \in \cU \\
&&
y_2 &\geq f_2(x,\xi) 
& \; \forall \xi \in \cU \\
  && x &\in \cX & \\
  &&  y &\in \R^2 & 
\end{align*}
This is justified by the following lemma which regards the relationship of $\textup{BRO}(\cU)$ and $\textup{BRO}_{\textup{BN}}(\cU)$.

\begin{lemma} \label{lem:BRO-BROBN}
Let a problem of type \eqref{eq:GrundproblemBRO} be given.
In particular, let  $\cU$ be compact and $f_i(x,\cdot) \colon \conv(\cU) \to \R$, $x \in \cX$, $i=1,2$, be continuous.
Then
\begin{enumerate}[label=(\roman*)]
\item $\{ (x,y): x \in \cX, y \geq f^{\cU}(x) \} \not= \emptyset$
  is the set of feasible solutions for $\textup{BRO}_\textup{BN}(\cU)$.
\item $X \subseteq \cX$ is the set of efficient solutions to $\textup{BRO}(\cU)$
  if and only if $\{ (x,y): x \in X, y = f^{\cU}(x) \}$ is the set of efficient solutions to
  $\textup{BRO}_\textup{BN}(\cU)$.
  In particular, the set of nondominated points for $\textup{BRO}(\cU)$ and
  $\textup{BRO}_\textup{BN}(\cU)$
  coincide.
\item 
  The set of extreme supported nondominated points for $\textup{BRO}(\cU)$ and
  $\textup{BRO}_\textup{BN}(\cU)$ coincide. 
\item $X \subseteq \cX$ is a representative set of extreme supported efficient solutions to $\textup{BRO}(\cU)$ if and only if $\{ (x,y) \colon x \in X, y = f^{\cU}(x) \}$   is a representative set of extreme supported  efficient solutions to
  $\textup{BRO}_\textup{BN}(\cU)$.
\end{enumerate}
\end{lemma}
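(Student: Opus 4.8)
The plan is to prove the four items in the order (i)--(iv), with item (ii) carrying the essential content and items (iii)--(iv) following from it almost formally. For item (i) I would first use assumptions (BRO-2)/(BRO-3) (here: $\cU$ compact, $f_i(x,\cdot)$ continuous) to note that $f_i^\cU(x)=\sup_{\xi\in\cU}f_i(x,\xi)$ is attained and finite for every $x\in\cX$ and $i=1,2$. Consequently, for fixed $x\in\cX$ and $y\in\R^2$ the infinite system $y_i\geq f_i(x,\xi)$ for all $\xi\in\cU$, $i=1,2$, is equivalent to the two inequalities $y\geq f^\cU(x)$; this identifies the feasible set of $\textup{BRO}_\textup{BN}(\cU)$ with $\{(x,y)\colon x\in\cX,\ y\geq f^\cU(x)\}$. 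Non-emptiness is immediate: $\cX\neq\emptyset$, and for any $x\in\cX$ the pair $(x,f^\cU(x))$ is feasible.

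For item (ii) I would introduce the map $\phi\colon x\mapsto(x,f^\cU(x))$ and its inverse candidate $\psi\colon(x,y)\mapsto x$, and show that $\phi$ restricts to a bijection between the efficient solutions of $\textup{BRO}(\cU)$ and those of $\textup{BRO}_\textup{BN}(\cU)$ that preserves the objective value ($f^\cU(x)\leftrightarrow y$). Two directions are needed. If $x$ is efficient for $\textup{BRO}(\cU)$ but $(x,f^\cU(x))$ were dominated by some feasible $(x',y')$, then feasibility gives $f^\cU(x')\leq y'$ and domination gives $y'\preceq f^\cU(x)$; chaining yields $f^\cU(x')\leq f^\cU(x)$, and the case $f^\cU(x')=f^\cU(x)$ is ruled out (it would force $y'=f^\cU(x)$, contradicting $y'\neq f^\cU(x)$), so $f^\cU(x')\preceq f^\cU(x)$ contradicts efficiency of $x$. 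Conversely, if $(x,y)$ is efficient for $\textup{BRO}_\textup{BN}(\cU)$, then $y=f^\cU(x)$ (otherwise $f^\cU(x)\preceq y$ and $(x,f^\cU(x))$, which is feasible by (i), would dominate $(x,y)$), and then any $x'$ dominating $x$ in $\textup{BRO}(\cU)$ would make $(x',f^\cU(x'))$ dominate $(x,f^\cU(x))=(x,y)$, a contradiction. Since $\phi,\psi$ are mutually inverse and objective-value-preserving, the images of the efficient sets — i.e. the nondominated sets — coincide.

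For item (iii) I would invoke that the notion of an extreme supported nondominated point is characterized, among nondominated points, by a condition that refers only to the set $\cY_\textup{N}$ (no convex combination of points of $\cY_\textup{N}\setminus\{y\}$ is $\leq y$); together with the domination property (valid for $\textup{BRO}(\cU)$ by Corollary~\ref{cor:PropertiesForBRO}, and for $\textup{BRO}_\textup{BN}(\cU)$ since its objective-space image is $f^\cU(\cX)+\R^2_\succeq$) this forces $\cY_\textup{ESN}\subseteq\cY_\textup{N}$ in both problems, and since the two $\cY_\textup{N}$ coincide by (ii), so do the two $\cY_\textup{ESN}$. For item (iv) I would note that in $\textup{BRO}_\textup{BN}(\cU)$ the objective value of a feasible $(x,y)$ is simply $y$, so $(x,y)$ is extreme supported efficient iff it is efficient and $y\in\cY_\textup{ESN}$; by (ii)/(iii) these are exactly the pairs $(x,f^\cU(x))$ with $f^\cU(x)\in\cY_\textup{ESN}$, i.e. with $x$ extreme supported efficient for $\textup{BRO}(\cU)$. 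Hence $f^\cU(X)=\cY_\textup{ESN}$ if and only if the $y$-image of $\{(x,f^\cU(x))\colon x\in X\}$ equals $\cY_\textup{ESN}$, which is the asserted correspondence of representative sets.

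The main obstacle I anticipate is purely the careful bookkeeping in item (ii) around the domination relation $\preceq$ (meaning $\leq$ and $\neq$): in both directions one composes two order relations — one coming from feasibility ($f^\cU(x')\leq y'$) and one from the assumed domination — and must argue that the composite is again a strict domination rather than a mere $\leq$, which is exactly where the subcase $f^\cU(x')=f^\cU(x)$ has to be excluded explicitly. Everything else (the feasible-set identification in (i), and the reductions of (iii)--(iv) to (ii)) is then routine, modulo the small remark that the bottleneck problem inherits the domination property from the structure of its image set.
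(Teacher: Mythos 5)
Your proposal is correct and follows essentially the same route as the paper: identify the feasible set of $\textup{BRO}_\textup{BN}(\cU)$ by definition, show that efficient solutions of the bottleneck problem lie on the graph $y=f^{\cU}(x)$ and that the correspondence $x\leftrightarrow(x,f^{\cU}(x))$ preserves objective values, then deduce (iii) from the fact that extreme supported nondominance depends only on the nondominated set and (iv) from equality of objective-space images. Your extra care in (ii) with the $\preceq$ bookkeeping and in (iii) with the domination property (to ensure $\cY_\textup{ESN}\subseteq\cY_\textup{N}$ in both problems) only makes explicit what the paper leaves implicit.
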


\begin{proof} \
\begin{enumerate}[label=(\roman*)]
\item Directly by definition of $\textup{BRO}_\textup{BN}(\cU)$.
The feasible set of  $\textup{BRO}_\textup{BN}(\cU)$ is not empty due to compactness of $\cU$.

\item Let $(x,y)$ be efficient for $\textup{BRO}_\textup{BN}(\cU)$.
We show that this yields $y=f^{\cU}(x)$: 
Clearly, $y \geq f^\cU(x)$ otherwise $(x,y)$ is not feasible for $\textup{BRO}_\textup{BN}(\cU)$, (see (i)).
Now assume that $y_i > \max_{\xi \in \cU} f_i(x,\xi)$ for $i \in \{1,2\}$.
Then $(x,y)$ is dominated by the feasible solution $(x,f^\cU(x))$ and hence not efficient.
The set of efficient solutions to  $\textup{BRO}_\textup{BN}(\cU)$ hence is contained in $\{(x,f^{\cU}(x)): x \in \cX\}$. 

Note that $f^{\cU}(x)$ is the objective function value of $x$ in $\textup{BRO}(\cU)$ and also of $(x,f^{\cU}(x))$ in $\textup{BRO}_\textup{BN}(\cU)$.
This yields that $x$ is efficient to $\textup{BRO}(\cU)$ if and only if $(x,f^{\cU}(x))$ is efficient to $\textup{BRO}_\textup{BN}(\cU)$.
Hence, $X$ is the set of efficient solutions to $\textup{BRO}(\cU)$ if and only if $\{(x,f^{\cU}(x)): x \in X\}$ is  the set of efficient solutions to $\textup{BRO}_\textup{BN}(\cU)$ and the sets of nondominated points of both problems coincide.

\item The definition of extreme supported nondominated solutions only uses the set of nondominated points in objective space.
Due to (ii) the set of nondominated points for $\textup{BRO}(\cU)$ and $\textup{BRO}_\textup{BN}(\cU)$ coincide, hence also their extreme supported nondominated points. 

\item Let $X \subseteq \cX$ be a representative set of extreme supported efficient solutions to $\textup{BRO}(\cU)$.
Then $f^{\cU}(X)$ is the set of extreme supported nondominated points for $\textup{BRO}(\cU)$.
According to (iii), $f^{\cU}(X)$ is also the set of extreme supported nondominated points to $\textup{BRO}_\textup{BN}(\cU)$.
Since $f^{\cU}(X)$ is the image of $\{(x,f^{\cU}(x)): x \in X\}$ for $\textup{BRO}_\textup{BN}(\cU)$, the latter set is a representative set of extreme supported efficient solutions to $\textup{BRO}_\textup{BN}(\cU)$.
   
Let there be a representative set of extreme supported efficient solutions to $\textup{BRO}_\textup{BN}(\cU)$.
By (ii), it takes the form $\{ (x,y) \colon x \in X, y = f^{\cU}(x) \}$ for some $X \subseteq \cX$.

Its image $f^{\cU}(X)$ then is the set of extreme supported nondominated solutions to $\textup{BRO}_\textup{BN}(\cU)$, and according to (iii), also to $\textup{BRO}(\cU)$. Consequently, $X$ is a representative set of extreme supported efficient solutions to $\textup{BRO}(\cU)$.
\end{enumerate}
\end{proof}

$\textup{BRO}_\textup{BN}(\cU)$ has linear objective functions. 
However, to ensure its feasible set meets the requirements of Lemma~\ref{lem:Dichotomic},
we additionally need that the feasible set of $\textup{BRO}_\textup{BN}(\cU)$ is a polyhedron intersected with $\Z^k \times \R^{n-k}$ for $0 \leq k \leq n$. 
Then Algorithm~\ref{algo:DichotomicSearch} can be applied to $\textup{BRO}_\textup{BN}(\cU)$ and determines all its extreme supported nondominated points and a representative set of extreme supported efficient solutions.
In the following lemma we show more, namely that we do not need the bottleneck version but can apply Algorithm~\ref{algo:DichotomicSearch} directly to $\textup{BRO}(\cU)$ to receive the extreme supported nondominated points and a representative set of extreme supported efficient solutions of $\textup{BRO}(\cU)$, if the set $\cU$ of scenarios is finite.

\begin{lemma} \label{lem:DichotomicSearch-BRO-finiteU}
Let a problem of type \eqref{eq:GrundproblemBRO} be given and let (BRO-1) and (BRO-3) hold.
We assume that $\cU$ is non-empty and finite.

Then Algorithm~\ref{algo:DichotomicSearch} applied to \eqref{eq:GrundproblemBRO}
returns a set $Y^\ast$ containing all extreme supported nondominated
points and a set $X^\ast$ containing a representative set of extreme supported efficient solutions
after $2|Y^\ast| - 3$ iterations (lines 14-23) if $|Y^\ast| > 2$ and zero iterations if $|Y^\ast|=1$.
\end{lemma}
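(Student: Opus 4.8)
The plan is to reduce the claim to Lemma~\ref{lem:Dichotomic}, applied to the bottleneck reformulation $\textup{BRO}_\textup{BN}(\cU)$, and then to transfer the conclusion back to $\textup{BRO}(\cU)$ via Lemma~\ref{lem:BRO-BROBN}. First I would check that $\textup{BRO}_\textup{BN}(\cU)$ satisfies the hypotheses of Lemma~\ref{lem:Dichotomic}. Its objective functions $(x,y)\mapsto y_1$ and $(x,y)\mapsto y_2$ are linear; and since $\cU$ is non-empty and finite and each $f_i(\cdot,\xi)$ is linear in $x$ by (BRO-3), the constraints $y_i\ge f_i(x,\xi)$, $\xi\in\cU$, $i=1,2$, are finitely many linear inequalities in $(x,y)$. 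Together with $\cX=P\cap(\Z^k\times\R^{n-k})$ from (BRO-1), the feasible set of $\textup{BRO}_\textup{BN}(\cU)$ is therefore a polyhedron in $\R^{n+2}$ intersected with $\Z^k\times\R^{n+2-k}$. For \eqref{eq:ideal-point-existent} note that $\cX$ is compact, being the intersection of the polytope $P$ with the closed set $\Z^k\times\R^{n-k}$, and that $f^\cU_i=\max_{\xi\in\cU}f_i(\cdot,\xi)$ is continuous as a maximum of finitely many continuous functions, so $\min_{x\in\cX}f^\cU_i(x)$ is attained; moreover this value equals $\min\{y_i:(x,y)\text{ feasible for }\textup{BRO}_\textup{BN}(\cU)\}$, since a feasible pair satisfies $y_i\ge f^\cU_i(x)$ and $(x^\ast,f^\cU(x^\ast))$ is feasible for any minimiser $x^\ast$ of $f^\cU_i$. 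Hence Lemma~\ref{lem:Dichotomic} applies: Algorithm~\ref{algo:DichotomicSearch} run on $\textup{BRO}_\textup{BN}(\cU)$ returns all its extreme supported nondominated points and a representative set of its extreme supported efficient solutions in $2|Y^\ast|-3$ iterations (resp.\ zero).

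The core step is to observe that solving the problem-dependent subproblems of Algorithm~\ref{algo:DichotomicSearch} directly on $\textup{BRO}(\cU)$ amounts to solving the corresponding subproblems on $\textup{BRO}_\textup{BN}(\cU)$ and discarding the auxiliary variables $y$. Every such subproblem is either the determination of the lexicographically optimal solutions or a weighted-sum minimisation $\min_{x\in\cX}\lambda^t f^\cU(x)$ with $\lambda=(y^l_2-y^r_2,\,y^r_1-y^l_1)$, which lies in $\R^2_>$ because the elements of $\mathcal{L}$ satisfy $y^l_1<y^r_1$ and $y^l_2>y^r_2$. For any $\lambda\in\R^2_\geq$ one has
\[
\min_{x\in\cX}\bigl(\lambda_1 f_1^\cU(x)+\lambda_2 f_2^\cU(x)\bigr)
=\min\bigl\{\lambda_1 y_1+\lambda_2 y_2:(x,y)\text{ feasible for }\textup{BRO}_\textup{BN}(\cU)\bigr\},
\]
and at any optimum of the right-hand side $y=f^\cU(x)$ (for $\lambda\in\R^2_>$, since a coordinate $y_i>f^\cU_i(x)$ could be decreased; and likewise for the nested lexicographic minimisation of $y_1$ and then of $y_2$ subject to $y_1\le\epsilon_1$, which forces $y=f^\cU(x)$ because $\epsilon_1=\min_x f^\cU_1(x)$). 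Consequently the run of Algorithm~\ref{algo:DichotomicSearch} on $\textup{BRO}(\cU)$ traces exactly the run on $\textup{BRO}_\textup{BN}(\cU)$ under the identification of $g(x)=f^\cU(x)$ with $y$ and of $x$ with $(x,f^\cU(x))$: it produces the same sequence of objective-space points $y^L,y^R,y^\ast,\dots$, every test $\lambda^T y^\ast<\lambda^T y^l$ has the same outcome, it terminates after the same number of iterations, and it returns the set $Y^\ast$ of extreme supported nondominated points of $\textup{BRO}_\textup{BN}(\cU)$ together with a set $X^\ast$ with $f^\cU(X^\ast)=Y^\ast$.

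Finally I would translate back. By Lemma~\ref{lem:BRO-BROBN}(iii) the extreme supported nondominated points of $\textup{BRO}(\cU)$ and $\textup{BRO}_\textup{BN}(\cU)$ coincide, so the returned $Y^\ast$ is precisely the set of extreme supported nondominated points of $\textup{BRO}(\cU)$; and since $f^\cU(X^\ast)=Y^\ast$, the set $X^\ast$ is a representative set of extreme supported efficient solutions of $\textup{BRO}(\cU)$ (cf.\ Lemma~\ref{lem:BRO-BROBN}(iv)). The iteration count $2|Y^\ast|-3$ for $|Y^\ast|>2$, resp.\ zero for $|Y^\ast|=1$, is inherited verbatim from the $\textup{BRO}_\textup{BN}(\cU)$ run. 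The main obstacle is precisely this core step: making rigorous that the subproblems with a maximum in the objective, solved directly on $\textup{BRO}(\cU)$, behave exactly like the linear subproblems on $\textup{BRO}_\textup{BN}(\cU)$, so that Lemma~\ref{lem:Dichotomic} — stated only for genuinely linear biobjective problems — may legitimately be invoked.
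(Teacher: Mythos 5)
Your proposal is correct and follows essentially the same route as the paper: verify that the bottleneck reformulation $\textup{BRO}_\textup{BN}(\cU)$ meets the hypotheses of Lemma~\ref{lem:Dichotomic} (linear objectives, finitely many added linear constraints, ideal point property), transfer the output back via Lemma~\ref{lem:BRO-BROBN}, and argue that running Algorithm~\ref{algo:DichotomicSearch} directly on $\textup{BRO}(\cU)$ solves the same subproblems with the same objective-space outcomes. Your treatment of the weighted-sum and lexicographic subproblems is in fact somewhat more explicit than the paper's brief remark that the outcomes in $\R^2$ coincide, but it is the same argument.
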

\begin{proof}
The proof is in two parts: 
First, we show that dichotomic search \emph{applied to the bottleneck version}  $\textup{BRO}_\textup{BN}(\cU)$ of $\textup{BRO}(\cU)$ returns a representative set of extreme supported efficient solutions and the set of all extreme supported nondominated points for the (non-bottleneck) problem $\textup{BRO}(\cU)$. 
Second, we show that applying dichotomic search directly to $\textup{BRO}(\cU)$ yields the exact same solutions as applying it to the bottleneck version $\textup{BRO}_\textup{BN}(\cU)$.

For the first part we use that the bottleneck version of the problem,
i.e., $\textup{BRO}_\textup{BN}(\cU)$, meets the requirements of Lemma~\ref{lem:Dichotomic}:
We use the assumptions made for \eqref{eq:GrundproblemBRO} and see that
$\textup{BRO}_\textup{BN}(\cU)$ is
a biobjective problem with two linear objectives $y_1$ and $y_2$.
For the feasible set note that the original feasible set $\cX$ of $\textup{BRO}(\cU)$ 
is given as $\cX=P \cap (\R^{n-k} \times \Z^k)$. Since we add two variables and two
linear constraints for each scenario from the finite set $\cU$ (see part (i) of
Lemma~\ref{lem:BRO-BROBN}) also the resulting feasible set for $\textup{BRO}_\textup{BN}(\cU)$ can be written as $P^\prime \cap (\R^{n^\prime-k} \times \Z^k)$ with a new polyhedron $P^\prime$ and dimension $n^\prime=n+2$.
Furthermore, \eqref{eq:ideal-point-existent} holds due to Corollary~\ref{cor:PropertiesForBRO}.

Thus, due to Lemma~\ref{lem:Dichotomic},
dichotomic search (Algorithm~\ref{algo:DichotomicSearch}) can be applied
and a set $Y^\ast_{BN}$ containing all extreme supported nondominated points 
and a representative set of extreme supported efficient solutions $X^\ast_{BN}$ for $\textup{BRO}_\textup{BN}(\cU)$ are determined after $2|Y^\ast| - 3$ iterations (lines 14-23) if $|Y^\ast| > 2$ and zero iterations if $|Y^\ast|=1$.

Lemma~\ref{lem:BRO-BROBN}~(iv) shows that 
$X^*_{BN}=\{(x,f^{\cU}(x)) \colon x \in X \}$ for some set $X \subseteq \cX$ which is a representative set of extreme supported efficient solutions of $\textup{BRO}(\cU)$.

For the second part, note that the difference between using
$\textup{BRO}(\cU)$ or $\textup{BRO}_{BN}(\cU)$ concerns
lines 2, 3, 5, 6, and each iteration of line 17 of Algorithm~\ref{algo:DichotomicSearch}.
However, there is no difference between applying these steps to 
$\textup{BRO}(\cU)$ and $\textup{BRO}_\textup{BN}(\cU)$: the feasible set of the latter problem is of higher dimension than the feasible set of the former but their outcomes in the objective space $\R^2$ coincide (see Lemma~\ref{lem:BRO-BROBN}) and only those are needed for subsequent computations.
\end{proof}

The lemma above justifies the application of dichotomic search to our problem of interest \eqref{eq:GrundproblemBRO} if $\cU$ is finite.
However, in  \eqref{eq:GrundproblemBRO} $\cU$ may be a polytope. On the other hand,
in (BRO-3) we
made the additional -- and thus far unnecessary -- assumption that
$f_i(x, \cdot) \colon \cU \to \R$, $i=1,2$ are quasi-convex.
Utilizing this additional requirement, we now show that
Lemma~\ref{lem:DichotomicSearch-BRO-finiteU} is still valid if $\cU$ is a polytope
instead of a finite set.

\begin{lemma} \label{lem:DichotomicSearch-BRO-polytopeU}
Let a problem of type \eqref{eq:GrundproblemBRO} be given and let
(BRO-1) and (BRO-3) hold.
We assume that $\cU$ is a polytope.

Then Algorithm~\ref{algo:DichotomicSearch} applied to \eqref{eq:GrundproblemBRO}
returns a set $Y^\ast$ containing all extreme supported nondominated
points and a set $X^\ast$ containing a representative set of extreme supported efficient solutions
after $2|Y^\ast| - 3$ iterations (lines 14-23) if $|Y^\ast| > 2$ and zero iterations if $|Y^\ast|=1$.
\end{lemma}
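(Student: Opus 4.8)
The plan is to reduce the polytope case to the finite case already settled in Lemma~\ref{lem:DichotomicSearch-BRO-finiteU}. Since $\cU$ is a (non-empty) polytope, it is the convex hull of its finite, non-empty set of vertices $V \coloneqq \ext(\cU)$. The crucial point is that passing from the scenario set $\cU$ to $V$ does not change the objective: for every fixed $x \in \cX$ and $i\in\{1,2\}$ one has $f_i^\cU(x) = f_i^V(x)$, so $\textup{BRO}(\cU)$ and $\textup{BRO}(V)$ are the same biobjective minmax problem, with identical feasible set and identical objective vector $g = f^\cU = f^V$ on $\cX$.

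First I would prove this identity of objectives. By (BRO-3), $f_i(x,\cdot)$ is quasi-convex on $\conv(\cU)=\cU$, i.e., its sublevel sets are convex. A standard induction on $|V|$ then shows $f_i(x,\xi) \le \max_{v\in V} f_i(x,v)$ for every $\xi = \sum_j \lambda_j v_j \in \conv(V)$: writing $\xi = \lambda_1 v_1 + (1-\lambda_1)\eta$ with $\eta \in \conv(V\setminus\{v_1\})$ (the case $\lambda_1=1$ being trivial) and applying quasi-convexity, $f_i(x,\xi) \le \max\{f_i(x,v_1),\, f_i(x,\eta)\} \le \max_{v\in V} f_i(x,v)$ by the inductive hypothesis. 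Since $V \subseteq \cU$, the reverse inequality $\sup_{\xi\in\cU}f_i(x,\xi) \ge \max_{v\in V}f_i(x,v)$ is immediate, so $f_i^\cU(x) = \max_{v\in V}f_i(x,v) = f_i^V(x)$. (Continuity of $f_i(x,\cdot)$, also assumed in (BRO-3), is not needed for this equality; together with compactness of $\cU$ it only re-confirms that the supremum is attained, which we already know.)

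Next I would observe that $\textup{BRO}(V)$ is again a problem of type \eqref{eq:GrundproblemBRO} meeting the hypotheses of Lemma~\ref{lem:DichotomicSearch-BRO-finiteU}: (BRO-1) is untouched; $f_1,f_2$ restricted to $\cX\times V$ remain linear in $x$ and, regarded on $\cX\times\conv(V)=\cX\times\cU$, stay quasi-convex and continuous in $\xi$, so (BRO-3) holds; and $V$ is non-empty and finite. Hence Lemma~\ref{lem:DichotomicSearch-BRO-finiteU} applies to $\textup{BRO}(V)$: Algorithm~\ref{algo:DichotomicSearch} run on $\textup{BRO}(V)$ returns a set $Y^\ast$ of all extreme supported nondominated points and a set $X^\ast$ containing a representative set of extreme supported efficient solutions, after $2|Y^\ast|-3$ iterations of lines 14--23 if $|Y^\ast|>2$ and zero iterations if $|Y^\ast|=1$.

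Finally I would note that running Algorithm~\ref{algo:DichotomicSearch} on $\textup{BRO}(\cU)$ is, step for step, the same computation as running it on $\textup{BRO}(V)$: the algorithm only ever evaluates the objective vector $g$ --- in the lexicographic subproblems (lines 2, 3, 5, 6) and in the weighted-sum subproblems $\min_{x\in\cX}\lambda^T g(x)$ (line 17) --- and $g = f^\cU = f^V$ as functions on $\cX$ by the first step. Moreover, since $\textup{BRO}(\cU)$ and $\textup{BRO}(V)$ share the same objective function, they share the same nondominated points, the same extreme supported nondominated points, and the same representative sets, so the output $Y^\ast, X^\ast$ is correct for $\textup{BRO}(\cU)$ as well. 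The one non-routine ingredient is the quasi-convex-maximum-at-a-vertex step of the first paragraph; the remaining verifications parallel the proof of Lemma~\ref{lem:DichotomicSearch-BRO-finiteU}.
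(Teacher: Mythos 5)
Your proposal is correct and follows essentially the same route as the paper: reduce to the finite scenario set $\ext(\cU)$ using that quasi-convexity makes $f^{\cU}=f^{\ext(\cU)}$, invoke Lemma~\ref{lem:DichotomicSearch-BRO-finiteU}, and note that the algorithm's steps on $\textup{BRO}(\cU)$ coincide with those on the finite-scenario problem since only the objective values are used. The only difference is cosmetic: the paper cites \cite[Theorem~5.9]{EIS13} for the maximum-at-a-vertex equality, whereas you prove it directly by induction on the vertices.
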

\begin{proof}
If $\cU$ is a polytope it has a finite number of (not necessarily known) extreme points $\xi_1,\ldots,\xi_l$.
Since the functions $f_1(x,\cdot), f_2(x,\cdot) \colon \conv(\cU) \to \R$, $x \in \cX$ are quasi-convex, according to \cite[Theorem~5.9]{EIS13}, $\textup{BRO}(\cU)$ and $\textup{BRO}(\{\xi_1,\ldots,\xi_l\})$ are equivalent since their objective functions $f^\cU$ and $f^{\{ \xi_1, \ldots, \xi_l \}}$ are the same.

Lemma~\ref{lem:DichotomicSearch-BRO-finiteU} justifies that we can apply Algorithm~\ref{algo:DichotomicSearch} to $\textup{BRO}(\{\xi_1,\ldots,\xi_l\})$ and get all extreme supported nondominated points and a representative set of extreme supported efficient solutions of $\textup{BRO}(\{\xi_1,\ldots,\xi_l\})$ and hence also of $\textup{BRO}(\cU)$ in 
$2|Y^\ast| - 3$ iterations if $|Y^\ast| > 2$ and zero iterations if $|Y^\ast|=1$. 
This, however, requires that $\xi_1,\ldots,\xi_l$ are known.
Since finding the vertices of a given polytope, known as \emph{vertex enumeration}, is a hard problem (see \cite{khachiyan2009generating}),
we apply Algorithm~\ref{algo:DichotomicSearch} directly to $\textup{BRO}(\cU)$ without using the extreme points of $\cU$.
Luckily, this can be done by using the equivalence of $\textup{BRO}(\{\xi_1,\ldots,\xi_l\})$ and $\textup{BRO}(\cU)$ once more:

Namely, we replace $\textup{BRO}(\{\xi_1,\ldots,\xi_l\})$ by $\textup{BRO}(\cU)$ whenever it occurs in Algorithm~\ref{algo:DichotomicSearch}, i.e, in Steps~2,3,6,7 and in Step~17 and note that it does not change any result.
Summarizing, we can also apply Algorithm~\ref{algo:DichotomicSearch} directly to $\textup{BRO}(\cU)$.
\end{proof}

\section{Optimization-pessimization for biobjective optimization} \label{sec:OptPess}
In the previous section, we conceived the problem \eqref{eq:GrundproblemBRO} primarily as a biobjective problem -- with the more complicated objective function $f^\cU$ -- and suggested biobjective optimization methods. In this section, we take the perspective of a robust optimizer and apply a method known from robust optimization.
More precisely, we use a cutting plane approach, called \emph{optimization-pessimization}, which is designed to find minmax robust solutions of uncertain (but single-objective)
optimization problems. 
The approach is reviewed in Section \ref{sec:OptPess-intro} and extended
to multi-objective optimization problems in Section~\ref{sec:OptPess-multiobjective}.

\subsection{Optimization-pessimization for single-objective robust optimization} \label{sec:OptPess-intro}

This section deals with uncertain (single-objective) optimization problems, 
\begin{equation*} \tag{\ref{eq:UncertainProblem} revisited}
\left\lbrace
\min_{x\in\cX} h(x,\xi) \colon \xi \in \cU
\right\rbrace
\text.
\end{equation*}
More specifically, we want to determine minmax robust solutions for such problems and, to that end, solve the robust counterpart,
\begin{align*} \tag{\ref{eq:MinSupProblem} revisited}
\PsingleU{\cU}
&&
\min_{x \in \cX} \sup_{\xi \in \cU} h(x,\xi)
\text.
&&
\end{align*}
We assume that for every fixed $x \in \cX$ the function $h(x,\cdot) \colon \conv(\cU) \to \R$ is continuous and quasi-convex and that $\cU$ is compact.
Hence, $\sup_{\xi \in \cU} h(x,\xi)$ is attained for all $x \in \cX$ and from now on we can write $\max_{\xi \in \cU} h(x,\xi)$ instead. Let us denote 
$z(\cU) \coloneqq \min_{x \in \cX} \max_{\xi \in \cU} h(x,\xi)$
as optimal objective function value of \eqref{eq:MinSupProblem} for a given uncertainty set $\cU$.
\medskip

There exist many approaches for solving problem \eqref{eq:MinSupProblem}, which are grouped in \cite{HertogEtAl2015} into two classes:
The first class of algorithms is based on reformulations to avoid the maximum over an (often infinite) set. 
We follow this approach in Section~\ref{sec:dualize}.
The algorithms of the second class proceed iteratively.
They start with a small set of scenarios and add scenarios step by step.
These approaches are known under various names such as 
\emph{cutting set method} (\cite{MutapicBoyd09}),
\emph{cutting plane method} (\cite{bertsimas2016reformulation}), 
\emph{scenario relaxation procedure} (\cite{assavapokee2008scenario}, \cite{AisBazVanSurvey09}),
\emph{outer approximation method} (\cite{reemtsen1994some} \cite{burger2013polyhedral} \cite{GoeSchoe13-AE}), 
\emph{(modified) Benders decomposition approach} (\cite{montemanni2006benders}, \cite{siddiqui2011modified}), or
\emph{implementor-adversarial framework} (\cite{bienstock2007histogram}).

We refer to it as \emph{optimization-pessimization}.
The idea is to utilize that robust optimization problems are easier to solve for (very) small uncertainty sets:
The routine starts with a reduced set of scenarios $\cU^\prime$ for which a robust solution is determined. For this solution, the routine determines a worst-case scenario out of the full uncertainty set $\cU$ which is added to $\cU^\prime$. 
For the new scenario set, a new robust solution is found.
This procedure is repeated until the quality of the solution found is good enough,  see Figure~\ref{fig:Opt-Pess-Scheme} for an illustration.

\begin{figure}[hbtp]
\centering
\usetikzlibrary{shapes.geometric,arrows}
\tikzstyle{block} = [rectangle, draw, text width=16em, text badly centered, rounded corners, minimum height=4em] 
\tikzstyle{emptyblock} = []  
\small
\begin{tikzpicture}[node distance=3.5cm, auto, arrow/.style={-latex, shorten >=1ex, shorten <=1ex, bend angle=45}]
\node [emptyblock]				(ctr) {};
\node [block, left of=ctr]		(opt)  {
	Optimization:\\ 
	Determine robust solution $x^\ast \in \cX$ of $\PsingleU{\cU^\prime}$ finding\\
	$x^\ast \in \argmin_{x \in \cX} \max_{\xi \in \cU^\prime} h(x,\xi)$
}; 
\node [block, right of=ctr]		(pess) {
	Pessimization:\\
	Determine worst-case scenario $\xi^\ast \in \cU$ for given $x^\ast$ finding\\
	$\xi^\ast \in \argmax_{\xi \in \cU} h(x^\ast,\xi)$};  
\node [block, below of=ctr, distance=1cm]		(add) {Add scenario: $\cU^\prime \coloneqq \cU^\prime \cup \{ \xi \}$};  
\draw [arrow, bend angle=45, bend left]  (opt) to (pess);
\draw [arrow, bend angle=30, bend left]  (pess) to (add);
\draw [arrow, bend angle=30, bend left]  (add) to (opt);
\end{tikzpicture}
\caption{Optimization-pessimization for robust single-objective optimization problems} \label{fig:Opt-Pess-Scheme}
\end{figure}
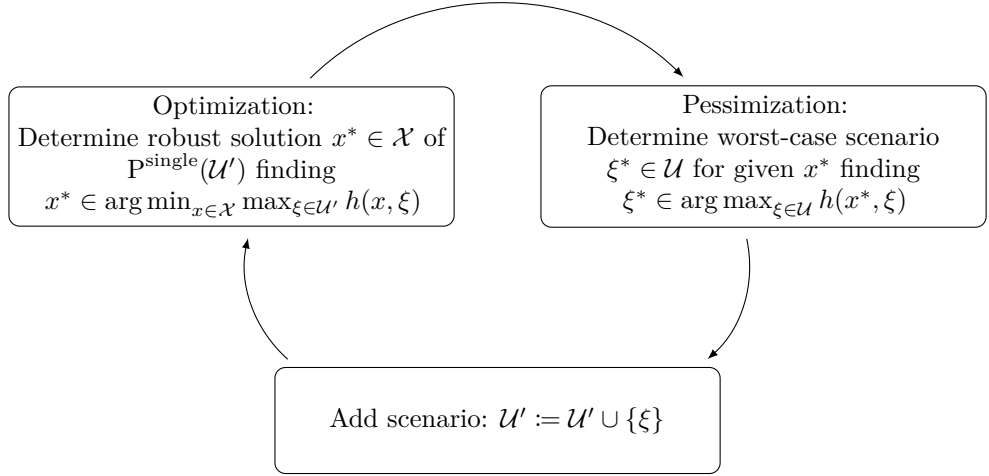

Formally, the optimization and pessimization problems are defined as follows:
For any $\cU^\prime \subseteq \cU$ the \emph{optimization problem}
is defined as
\begin{align*}
\PsingleU{\cU^\prime}
&&
z(\cU^\prime) \coloneqq \min_{x \in \cX} \max_{\xi \in \cU^\prime} h(x,\xi).
&&
\end{align*}
It is a relaxation of $\PsingleU{\cU}$ and, thus, yields a lower bound for $\PsingleU{\cU}$, i.e.,
\begin{equation}  \label{eq:lower}
  z(\cU^\prime) \leq z(\cU)
  \text.
\end{equation}

For a given $x \in \cX$, the \emph{pessimization problem} 
\begin{align*}
\text{Pess}(x)
&&
h^\cU(x) \coloneqq \max_{\xi \in \cU} h(x,\xi)
&&
\end{align*}
evaluates $x$ over the complete set of scenarios $\cU$ and, thus, provides an upper bound for $z(\cU)$, i.e.,
\begin{equation} \label{eq:upper}
  h^\cU(x) \geq z(\cU)
  \textup.
\end{equation}
Algorithm~\ref{algo:OptPess-single-objective}
describes how this method can be put to use algorithmically if $\cU$ is a polytope or finite.

\begin{algorithm}
    
\begin{algorithmic}[hbt!]
\small
	\REQUIRE{Robust optimization problem $P(\cU)$ as in \eqref{eq:MinSupProblem}.}
    \REQUIRE{Finite initial set $\cU^{(0)} \subseteq \cU$.}
	\ENSURE{Either $\cU$ finite or $\cU$ a polytope and $h(x,\cdot)$ continuous and quasi-convex.}
    \STATE{Set $k \coloneqq 0$.}
	\REPEAT%
		\STATE{Set $\cU^{(k+1)} \coloneqq \cU^{(k)}$.}
		
		\tikzmark{startopt41}
		\STATE{Determine $x^k \in \argmin_{x \in \cX} \{ \max_{\xi \in \cU^k} h(x,\xi) \}$. Set $z(\cU^{(k)}) \coloneqq \max_{\xi \in \cU^{(k)}} h(x^k,\xi)$.}
    	
    	\hfill~\tikzmark{endopt41}
    	
    	\tikzmark{startpess41}    	
		\STATE{For given $x^k$ determine solution $\xi^k \in \argmax_\cU h(x^k,\xi^k)$. Set $h^\cU(x^k) \coloneqq h(x^k,\xi^k)$.}  	
		\STATE{Add $\xi^k$ to $\cU^{(k+1)}$.}
		
	   	\hfill~\tikzmark{endpess41}
		
		\STATE{Set $k \coloneqq k+1$.}
                \UNTIL{$h^\cU(x^{k-1}) = z(\cU^{(k-1)})$.}
	\RETURN{robust solution $x^\ast$.}
	\RETURN{set of worst-case scenarios $\cU^\textup{FINAL}\coloneqq \cU^k$.}
\end{algorithmic}
\Textboxblue{startopt41}{endopt41}{Optimization}
\Textboxblue{startpess41}{endpess41}{Pessimization}
\caption{Optimization-pessimization for single-objective robust optimization} \label{algo:OptPess-single-objective}
\end{algorithm}

The routine produces a sequence of sets
\begin{equation}
\cU^{(0)} \subseteq \cU^{(1)} \subseteq \cU^{(2)} \subseteq \dots \subseteq \cU.
\end{equation}
According to \eqref{eq:lower} we receive a sequence of lower bounds 
\begin{equation} \label{eq:OptPess-LBSequence}
z(\cU^{(0)}) \leq  z(\cU^{(1)}) \leq z(\cU^{(2)}) \leq \dots \leq  z(\cU)
\end{equation}
and, a feasible solution $x^k$ in each iteration from which we can
  derive an upper bound according to \eqref{eq:upper}, i.e.,
\begin{equation} \label{eq:bounds-kth-iteration} 
z(\cU^{(k)}) \leq z(\cU) \leq h^\cU(x^k)
\textup.
\end{equation}
We stop when lower and upper bound coincide.
Then an optimal solution to \eqref{eq:MinSupProblem} and thus a (minmax) robust optimal solution to \eqref{eq:UncertainProblem} has been found.
For more detailed discussions of the method we refer to \cite{bertsimas2016reformulation,AisBazVanSurvey09,PaeSchoe20}.
The finiteness of Algorithm~\ref{algo:OptPess-single-objective}  for uncertainty sets $\cU$ that are polytopes is shown in the following lemma in part (ii).

\begin{lemma} \label{lem:OptPessSingleObjective}
Assume that $\PsingleU{\cU}$ has an optimal solution and $\PsingleU{\cU^\prime}$ has an optimal solution for all finite $\cU^\prime \subseteq \cU$.
\begin{enumerate}[label=(\roman*)]
\item 
Let $\cU$ be finite. 
Then Algorithm~\ref{algo:OptPess-single-objective} returns a solution to
$\PsingleU{\cU}$
in at most $|\cU|$ iterations.
\item  
Let $\cU$ be a polytope or finite and let $\ext(\cU)$ be its set of extreme points.
Let $h(x,\cdot) \colon \conv(\cU) \to \R$, $x \in \cX$, be continuous and quasi-convex.
Then Algorithm~\ref{algo:OptPess-single-objective} returns a solution to $\PsingleU{\cU}$ in at most $|ext(\cU)|$ iterations if we choose an algorithm for the pessimization problem $\textup{Pess}(x)$ which always finds an extreme point of $\cU$.
\end{enumerate}
\end{lemma}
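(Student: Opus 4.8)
The plan is to establish, in order: (a) if the loop of Algorithm~\ref{algo:OptPess-single-objective} ever stops, the returned point is minmax robust optimal; (b) every non-terminal iteration strictly enlarges the working scenario set $\cU^{(k)}$; and (c), for part~(ii), every scenario added by the pessimization step is a vertex of $\cU$. Finiteness and the stated iteration bounds then follow by counting. Fact (a) is quick: if the loop stops, the stopping test reads $h^\cU(x^{k-1}) = z(\cU^{(k-1)})$, where $k-1$ is the index of the last executed iteration, so the sandwich \eqref{eq:bounds-kth-iteration}, namely $z(\cU^{(k-1)}) \le z(\cU) \le h^\cU(x^{k-1})$, collapses to equalities; since $h^\cU(x^{k-1}) = \max_{\xi\in\cU}h(x^{k-1},\xi)$ and $z(\cU) = \min_{x\in\cX}\max_{\xi\in\cU}h(x,\xi)$, the point $x^{k-1}$ attains $z(\cU)$ and hence solves $\PsingleU{\cU}$. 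All subproblems along the way are solvable: the optimization ones by the standing hypothesis, because every $\cU^{(k)}$ is finite, and the pessimization ones because $\cU$ is compact and $h(x,\cdot)$ continuous.

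For (b), I would show that in a non-terminal iteration the scenario $\xi^k$ returned by $\textup{Pess}(x^k)$ does not lie in $\cU^{(k)}$: otherwise $h^\cU(x^k) = h(x^k,\xi^k) \le \max_{\xi\in\cU^{(k)}}h(x^k,\xi) = z(\cU^{(k)})$, which together with \eqref{eq:bounds-kth-iteration} forces $h^\cU(x^k)=z(\cU^{(k)})$, i.e.\ termination. Hence $|\cU^{(k+1)}| = |\cU^{(k)}|+1$ as long as the loop continues. This already proves part~(i): the sets $\cU^{(0)}\subsetneq\cU^{(1)}\subsetneq\cdots$ all lie in the finite set $\cU$, and once $\cU^{(k)}=\cU$ the point $x^k$ solves $\PsingleU{\cU}$, so $z(\cU^{(k)})=z(\cU)=h^\cU(x^k)$ and the iteration is terminal; since $\cU^{(0)}\neq\emptyset$, this happens within $|\cU|$ iterations.

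For part~(ii) I would add a quasi-convexity argument. Because $h(x,\cdot)$ is continuous and quasi-convex on $\conv(\cU)$ and $\cU=\conv(\ext(\cU))$, expressing any $\xi\in\cU$ as a convex combination of vertices yields $\max_{\xi\in\cU}h(x,\xi) = \max_{\xi\in\ext(\cU)}h(x,\xi)$ for every $x$ --- this is the equivalence of $\PsingleU{\cU}$ and $\PsingleU{\ext(\cU)}$ already invoked via \cite[Theorem~5.9]{EIS13}, and it is exactly what makes available a pessimization oracle returning a vertex of $\cU$. Consequently, whenever $\ext(\cU)\subseteq\cU^{(k)}\subseteq\cU$ one has $\max_{\xi\in\cU^{(k)}}h(x,\xi)=\max_{\xi\in\cU}h(x,\xi)$ for all $x$, hence $z(\cU^{(k)})=z(\cU)$ and the iteration is terminal. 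By hypothesis every $\xi^k$ is a vertex of $\cU$, and by (b) a non-terminal iteration adds a vertex not already present; these added vertices being pairwise distinct, after at most $|\ext(\cU)|$ non-terminal iterations $\cU^{(k)}$ contains all of $\ext(\cU)$ and the loop stops, giving the bound. The one genuinely delicate point is this last case: for a polytope $\cU$ the working set could a priori grow without bound, so finiteness rests on \emph{both} that visiting only vertices suffices (quasi-convexity) and that the algorithm is confined to visiting vertices (the oracle hypothesis) --- neither alone is enough. The remaining bookkeeping (the status of $\cU^{(0)}$, the precise off-by-one in the count) is routine.
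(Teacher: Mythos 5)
Your proof is correct and follows essentially the same route as the paper's: optimality at termination via the sandwich $z(\cU^{(k)})\leq z(\cU)\leq h^\cU(x^k)$, strict growth of $\cU^{(k)}$ in every non-terminal iteration (the paper's equation \eqref{helper1}), and, for (ii), quasi-convexity ensuring the pessimization maximum is attained at a vertex so that only finitely many distinct scenarios can ever be added. Your explicit remark that termination needs both the vertex oracle and the quasi-convexity, and your handling of the counting, are consistent with (and slightly more careful than) the paper's own argument.
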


\begin{proof}
Algorithm~\ref{algo:OptPess-single-objective} stops if the lower and upper bound for $z(\cU)$ coincide (see line 8 of Algorithm~\ref{algo:OptPess-single-objective}), i.e., if $h^\cU(x^k)=z(\cU^{(k)})$.
We hence have that $x^k$ is an optimal solution.
Note that
\begin{equation} \label{helper1}
\max_{\xi \in \cU}h(x^k,\xi)
=h^\cU(x^k)
=z(\cU^{(k)})
=\max_{\xi \in \cU^{(k)}}h(x^k,\xi)
\textup,
\end{equation}
if at least one worst-case scenario of $\cU$ for $x^k$ is already contained in $\cU^{(k)}$.
For a finite uncertainty set, in every iteration either a new worst-case scenario is added or \eqref{helper1} holds and the procedure stops.
The latter happens after at most $|\cU|$ iterations which shows (i).


For (ii), consider the pessimization problem $\textup{Pess}(x^k)$: 
here we maximize a continuous function over a compact set $\cU$, i.e., a maximum always exists.
Since $h(x,\cdot)$ is quasi-convex, a maximum is always attained at an extreme point of $\cU$.
If we choose an algorithm that returns an extreme point for such optimization problems, we add a new extreme point in each iteration.
Since the number of extreme points of $\cU$ is finite the procedure stops when \eqref{helper1} holds. As in part (i) this happens after at most $|\ext(\cU)|$ iterations.
 \end{proof}

We remark that Algorithm~\ref{algo:OptPess-single-objective} also converges for bounded non-polyhedral sets $\cU$ under uniform Lipschitz-continuity in $x$ for all fixed values of $\xi$ (see
\cite{MutapicBoyd09}).

\subsection{Optimization-pessimization for multi-objective robust optimization} \label{sec:OptPess-multiobjective}

\paragraph{Optimization and pessimization problem in the multiobjective case.}
In order to apply optimization-pessimization to \eqref{eq:GrundproblemBRO}, 
we need to generalize it to biobjective problems. In this section we go a step further
  and consider minmax problems with $p$ objective functions, i.e.,
\begin{align*} \tag{\ref{eq:Grundproblem-p-kriteriell} revisited} 
P(\cU)
&&
\min_{x\in\cX}
\begin{pmatrix}
\sup_{\xi \in \cU} f_1(x,\xi)\\
\sup_{\xi \in \cU} f_2(x,\xi)\\
\vdots \\
\sup_{\xi \in \cU} f_p(x,\xi)\\
\end{pmatrix}
\text.
&&
\end{align*}
for which we aspire to determine a representative set of extreme supported efficient solutions.
With this purpose in mind, we develop a generalized version of optimization-pessimization of Section~\ref{sec:OptPess-intro}.

The optimization problem $P(\cU^\prime)$ for $\cU^\prime \subseteq \cU$ is
the multiobjective optimization problem
\begin{align}
P(\cU^\prime)
&&
z(\cU^\prime)
\coloneqq
\min_{x \in \cX}
\begin{pmatrix}
\sup_{\xi \in \cU^\prime} f_1(x,\xi) \\
\sup_{\xi \in \cU^\prime} f_2(x,\xi) \\
\vdots \\
\sup_{\xi \in \cU^\prime} f_p(x,\xi)
\end{pmatrix}
\text. \label{eq:P_U'}
&&
\end{align}
The pessimization problem
\begin{align}
\textup{Pess}(x)
&&
f^\cU(x)
\coloneqq
\begin{pmatrix}
\sup_{\xi \in \cU} f_1(x,\xi) \\
\sup_{\xi \in \cU} f_2(x,\xi) \\
\vdots \\
\sup_{\xi \in \cU} f_p(x,\xi)
\end{pmatrix} \label{eq:Pess(x)}
&&
\end{align}
for given $x \in \cX$ consists of $p$ indepedent pessimization problems.

\paragraph{Lower and upper bounds provided by the optimization and the pessimization problem.}
We first discuss the optimization and pessimization problems in relation to \eqref{eq:Grundproblem-p-kriteriell} which we are interested to solve. 

For single-objective problems \eqref{eq:Grundproblem-p-kriteriell}, the solutions to $\PsingleU{\cU^\prime}$ and $\textup{Pess}(x)$ provide lower and upper bounds to \eqref{eq:Grundproblem-p-kriteriell}.
In the multi-objective setting we do not evaluate single solutions,
but we need to evaluate (Pareto) sets.
Sets can be compared by set order relations, one
of the most common ones is the \emph{upper setless order}: For two sets $Y_1,Y_2 \subset \R^p$
it is defined as follows:
\begin{equation*}
Y_1 \preceq^{upp} Y_2 \mbox{ if for all } y \in Y_2 \mbox{ there exists } \tilde{y}
  \in Y_1 \mbox{ with } \tilde{y} \leq y.
\end{equation*}
In this sense, we can say that $Y_1$ is an (upper setless) lower bound on $Y_2$.
We now use the upper setless order to generalize \eqref{eq:bounds-kth-iteration}
showing that for multi-objective
optimization we also get lower and upper bounds on \eqref{eq:Grundproblem-p-kriteriell}
when solving \eqref{eq:P_U'} and \eqref{eq:Pess(x)} for a subset $\cU^\prime$ of $\cU$.
More precisely, let $X^\ast(\cU)$ be the set of efficient solutions to \eqref{eq:Grundproblem-p-kriteriell}. 
Then $\{ f^{\cU}(x) \colon x \in X^\ast(\cU) \}$ describes the
Pareto frontier of \eqref{eq:Grundproblem-p-kriteriell}.
It can be bounded based on the solutions of the relaxation $P(\cU^\prime)$ as follows.

\begin{lemma} \label{lem:upper-setless-bounds}
  Let $\cU^\prime \subseteq \cU$ and denote $X^\ast(\cU^\prime)$ and
  $X^\ast(\cU)$ the set of efficient solutions of
  $P(\cU^\prime)$, and $P(\cU)$, respectively.
  Assume that $P(\cU^\prime)$, and $P(\cU)$ both satisfy the domination
  property \eqref{eq:domination-property}.
  Then the following holds for the upper setless order $\preceq^{upp}$:
  \begin{equation}
    \label{helper2}
    \{f^{\cU^\prime}(x)\colon x \in X^\ast(\cU^\prime)\} \preceq^{upp}
    \{f^{\cU}(x) \colon x \in X^\ast(\cU)\} \preceq^{upp}
    \{f^{\cU}(x)\colon x \in X^\ast(\cU^\prime)\} 
  \end{equation}
\end{lemma}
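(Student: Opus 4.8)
The plan is to prove the two inequalities in \eqref{helper2} separately, in each case unraveling the definition of the upper setless order $\preceq^{upp}$ and exploiting the pointwise monotonicity \eqref{eq:U'relaxation}, i.e.\ $f^{\cU^\prime}(x) \leq f^{\cU}(x)$ for $\cU^\prime \subseteq \cU$, together with the domination property to ``repair'' non-efficiency.

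\medskip
\noindent\textbf{First inequality.}
I would take an arbitrary $x \in X^\ast(\cU)$ and need to exhibit some $\tilde x \in X^\ast(\cU^\prime)$ with $f^{\cU^\prime}(\tilde x) \leq f^{\cU}(x)$. Since $x \in \cX$ is in particular feasible for $P(\cU^\prime)$, its image $f^{\cU^\prime}(x)$ lies in the objective space of $P(\cU^\prime)$. If $f^{\cU^\prime}(x)$ is already nondominated for $P(\cU^\prime)$, then because $P(\cU^\prime)$ satisfies \eqref{eq:domination-property} there is an efficient solution $\tilde x \in X^\ast(\cU^\prime)$ with $f^{\cU^\prime}(\tilde x) = f^{\cU^\prime}(x)$ (or the point itself is the image of some efficient solution); if it is dominated, the domination property \eqref{eq:domination-property} yields $\tilde x \in X^\ast(\cU^\prime)$ with $f^{\cU^\prime}(\tilde x) \preceq f^{\cU^\prime}(x)$. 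Either way $f^{\cU^\prime}(\tilde x) \leq f^{\cU^\prime}(x) \leq f^{\cU}(x)$, where the last step is \eqref{eq:U'relaxation}. This establishes $\{f^{\cU^\prime}(x)\colon x \in X^\ast(\cU^\prime)\} \preceq^{upp} \{f^{\cU}(x) \colon x \in X^\ast(\cU)\}$.

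\medskip
\noindent\textbf{Second inequality.}
Now I would take an arbitrary $x \in X^\ast(\cU^\prime)$ and must find $\tilde x \in X^\ast(\cU)$ with $f^{\cU}(\tilde x) \leq f^{\cU}(x)$. Here $x$ is feasible for $P(\cU)$, so $f^{\cU}(x)$ is a point in the objective space of $P(\cU)$; applying \eqref{eq:domination-property} for $P(\cU)$ (again with the trivial case that $f^{\cU}(x)$ is itself nondominated) gives an efficient solution $\tilde x \in X^\ast(\cU)$ with $f^{\cU}(\tilde x) \leq f^{\cU}(x)$, which is exactly what is needed for $\{f^{\cU}(x) \colon x \in X^\ast(\cU)\} \preceq^{upp} \{f^{\cU}(x)\colon x \in X^\ast(\cU^\prime)\}$. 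Note this half does not even use the relaxation inequality, only the domination property of the full problem.

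\medskip
\noindent\textbf{Main obstacle.}
The routine monotonicity bookkeeping is easy; the only delicate point is handling the boundary case where the candidate image is \emph{nondominated} but possibly not in the given efficient set because the definition of $\preceq^{upp}$ asks for a point in $\{f^\cU(x)\colon x\in X^\ast(\cdot)\}$, i.e.\ an image of an efficient solution. One must be careful that $X^\ast(\cdot)$ is a \emph{representative} set of efficient solutions (so every nondominated point is attained) — under the standing assumptions of \eqref{eq:GrundproblemBRO}, $P(\cU^\prime)$ and $P(\cU)$ satisfy \eqref{eq:domination-property} by Corollary~\ref{cor:PropertiesForBRO} (and Theorem~\ref{thm:DomProperty-IdealPoint}), so for any feasible point there is always an efficient solution whose image dominates or equals it, which closes the gap cleanly.
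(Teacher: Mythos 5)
Your proposal is correct and follows essentially the same route as the paper: for each half of \eqref{helper2} you fix a point in the right-hand set, use the monotonicity \eqref{eq:U'relaxation} and the domination property of the relevant problem to produce the dominating efficient solution, exactly as in the paper's proof (your case split on whether the image is nondominated is the same as the paper's split on whether $x$ already lies in $X^\ast(\cdot)$). The worry in your final paragraph is unnecessary: since $X^\ast(\cU^\prime)$ and $X^\ast(\cU)$ are the \emph{full} sets of efficient solutions, every nondominated point is by definition the image of some element of these sets, so no extra representativeness assumption is needed.
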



\begin{proof}
  We first show the left hand side of \eqref{helper2}. To this end, take
  $x \in X^\ast(\cU)$. We want to show that there exists $\tilde{x} \in
  X^\ast(\cU^\prime)$ such that
  \begin{equation}
    \label{helper3}
    f^{\cU^\prime}(\tilde{x}) \leq f^{\cU}(x).
    \end{equation}
  From $\cU^\prime \subseteq \cU$ we get that $f^{\cU^\prime}(x) \leq f^{\cU}(x)$,
  see \eqref{eq:U'relaxation}. Hence, if $x \in X^\ast(\cU^\prime)$ we set $\tilde{x} \coloneqq x$
  and are done. Otherwise, $x \not\in X^\ast(\cU^\prime)$, i.e., $x$ is not an efficient
  solution to $P(\cU^\prime)$. Then, due to the domination property, there
  exists $\tilde{x} \in X^\ast(\cU^\prime)$ with
  $f^{\cU^\prime}(\tilde{x}) \leq f^{\cU^\prime}(x) \leq f^{\cU}(x)$ and \eqref{helper3}
  holds.

  For the right hand side, we take $x \in X^\ast(\cU^\prime)$. The goal is to find
  $\tilde{x} \in X^\ast(\cU)$
  such that
  \begin{equation*}
  f^{\cU}(\tilde{x}) \leq f^{\cU}(x)
  \textup.
  \end{equation*}
  Similar as above, if $x \in X^\ast(\cU)$ we set $\tilde{x} \coloneqq x$ and are done. Otherwise,
  $x$ is not efficient for $P(\cU)$ and due to the domination property we find
  $\tilde{x} \in X^\ast(\cU)$ with $f^\cU(\tilde{x}) \leq f^\cU(x)$ which finishes
  the proof.
\end{proof}

The statement in~\eqref{helper2} is the multiobjective analog of~\eqref{eq:bounds-kth-iteration}.

\paragraph{Reduction of the scenario set.} 
In this paragraph, we examine the conditions under which a reduced uncertainty set $\cU^\prime \subset \cU$ already contains all relevant scenarios, such that the efficient solutions $P(\cU^\prime)$ are the same as those of $P(\cU)$.
In the single-objective setting this is the case if for an efficient solution to $P(\cU^\prime)$ a worst-case scenario is already included in $\cU^\prime$ (see \eqref{helper1}).
We call this the worst-case property \eqref{eq:WCincluded}.
The following theorem formalizes the above considerations and shows when the efficient solutions of $P(\cU)$ and $P(\cU^\prime)$ coincide.

  
\begin{thm} \label{thm:Konvergenzaussage}
Let $\cU^\prime \subset \cU$. 
Consider $x \in \cX$. 
If we have 
\leqnomode 
\begin{align} \tag{wc} \label{eq:WCincluded}
\sup_{\xi \in \cU^\prime} f_i(x,\xi) = \sup_{\xi \in \cU} f_i(x,\xi) \text{ for all } i=1,2,\dots,p \text{,}
\end{align}
\reqnomode
then the following holds: 
\begin{equation*}
x \text{ is efficient for } P(\cU^\prime) \Rightarrow x \text{ is efficient for } P(\cU) \text.
\end{equation*}

Additionally, if the domination property \eqref{eq:domination-property} holds for $P(\cU^\prime)$ and \emph{all} solutions $x \in \cX$ that are efficient for $P(\cU^\prime)$ satisfy \eqref{eq:WCincluded}, then the following holds:
\begin{align*}
x \text{ is  efficient for } P(\cU^\prime) \Leftrightarrow x \text{ is efficient for } P(\cU) \text.
\end{align*}
\end{thm}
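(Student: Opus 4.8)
The plan is to handle the two claims in turn, obtaining the equivalence from the implication plus a single appeal to the domination property. For the implication I would argue by contraposition. Assume $x$ satisfies the worst-case property \eqref{eq:WCincluded}, so that $f^{\cU^\prime}(x)=f^\cU(x)$, and suppose $x$ is \emph{not} efficient for $P(\cU)$. Then there is some $x^\prime\in\cX\setminus\{x\}$ with $f^\cU(x^\prime)\preceq f^\cU(x)$, i.e., $f^\cU(x^\prime)\le f^\cU(x)$ with strict inequality in at least one coordinate $i$. Combining this coordinate-wise with the relaxation inequality \eqref{eq:U'relaxation} gives $f^{\cU^\prime}_i(x^\prime)\le f^\cU_i(x^\prime)<f^\cU_i(x)=f^{\cU^\prime}_i(x)$, while $f^{\cU^\prime}_j(x^\prime)\le f^\cU_j(x^\prime)\le f^\cU_j(x)=f^{\cU^\prime}_j(x)$ for every $j$. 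Hence $f^{\cU^\prime}(x^\prime)\preceq f^{\cU^\prime}(x)$ with $x^\prime\neq x$, so $x$ is not efficient for $P(\cU^\prime)$ — the contrapositive of the asserted implication.

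For the equivalence, the direction ``$x$ efficient for $P(\cU^\prime)$ $\Rightarrow$ $x$ efficient for $P(\cU)$'' is immediate from the first claim, since by hypothesis every efficient solution of $P(\cU^\prime)$ satisfies \eqref{eq:WCincluded}. For the converse, suppose $x$ is efficient for $P(\cU)$ but, towards a contradiction, not efficient for $P(\cU^\prime)$. Failing to be efficient means $f^{\cU^\prime}(x)$ is a dominated point of $P(\cU^\prime)$, so the domination property \eqref{eq:domination-property} for $P(\cU^\prime)$ yields an efficient solution $x^\prime$ of $P(\cU^\prime)$ with $f^{\cU^\prime}(x^\prime)\preceq f^{\cU^\prime}(x)$. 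By hypothesis $x^\prime$ satisfies \eqref{eq:WCincluded}, hence $f^\cU(x^\prime)=f^{\cU^\prime}(x^\prime)\preceq f^{\cU^\prime}(x)\le f^\cU(x)$, the last step being \eqref{eq:U'relaxation}. Chasing the strictly smaller coordinate exactly as in the first claim shows $f^\cU(x^\prime)\preceq f^\cU(x)$ with $x^\prime\neq x$, contradicting the efficiency of $x$ for $P(\cU)$.

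I expect the coordinate-wise bookkeeping — turning a chain of ``$\le$'' inequalities, one of them strict in a single coordinate, back into a genuine dominance ``$\preceq$'' — to be the only fiddly part, and it is entirely routine. The one step worth stating carefully is that non-efficiency of $x$ for $P(\cU^\prime)$ puts $f^{\cU^\prime}(x)$ outside the set of nondominated points, which is precisely what licenses invoking the domination property for $P(\cU^\prime)$. Note that only $P(\cU^\prime)$ need satisfy \eqref{eq:domination-property}, and no continuity or compactness beyond what makes the suprema in \eqref{eq:Grundproblem-p-kriteriell} well defined is used anywhere.
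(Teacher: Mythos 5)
Your proof is correct and follows essentially the same route as the paper: contradiction via the chain $f^{\cU^\prime}(x^\prime)\leq f^{\cU}(x^\prime)\preceq f^{\cU}(x)=f^{\cU^\prime}(x)$ for the implication, and the domination property for $P(\cU^\prime)$ plus \eqref{eq:WCincluded} for the dominating efficient solution $x^\prime$ in the converse, yielding $f^{\cU}(x^\prime)=f^{\cU^\prime}(x^\prime)\preceq f^{\cU^\prime}(x)\leq f^{\cU}(x)$. Your closing remarks (that only $P(\cU^\prime)$ needs \eqref{eq:domination-property} and that the coordinate bookkeeping is the only delicate point) match the paper's argument exactly.
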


\begin{proof}
\underline{$\Rightarrow$:}
Let $x$ be efficient for $P(\cU^\prime)$ and satisfy \eqref{eq:WCincluded}, i.e., $f^{\cU^\prime}(x)=f^\cU(x)$.
Assume to the contrary that $x$ is not efficient for $P(\cU)$, i.e., there exists $x^\prime \in \cX$, such that
\begin{equation}
	f^\cU(x^\prime)
	\preceq
	f^\cU(x)
	\text.
	\label{eq:pb-proof01}
\end{equation}
$\cU^\prime \subseteq \cU$, hence $f^{\cU^\prime}(x^\prime) \leq f^{\cU}(x^\prime)$, see \eqref{eq:U'relaxation}. This leads to
\[
	f^{\cU^\prime}(x^\prime)
       	\stackrel{\eqref{eq:U'relaxation}}{\leq}
	f^{\cU}(x^\prime)
	\stackrel{\eqref{eq:pb-proof01}}{\preceq}
	f^{\cU}(x)
	\stackrel{\eqref{eq:WCincluded}}{=}
	f^{\cU^\prime}(x)
	,
\]
which contradicts efficiency of $x$ for $P(\cU^\prime)$.

\underline{$\Leftarrow$:}
Let \eqref{eq:WCincluded} hold for all solutions which are efficient for $P(\cU^\prime)$ and let $x$ be efficient for $P(\cU)$.
Assume to the contrary that $x \in \cX$ is not efficient for $P(\cU^\prime)$.
Then, since the domination property holds, 
there is a solution $x^\prime \in \cX$ that is efficient for P($\cU^\prime$) such that
\begin{equation}
	f^{\cU^\prime}(x^\prime)
	\preceq
	f^{\cU^\prime}(x)
	\text.
	\label{eq:pb-proof02}
\end{equation}
Note that since $x^\prime$ is efficient for $P(\cU^\prime)$, it satisfies \eqref{eq:WCincluded}. Together with $\cU^\prime \subseteq \cU$ we receive
\begin{equation*}
	f^{\cU}(x^\prime)
	\stackrel{\eqref{eq:WCincluded}}{=}
	f^{\cU^\prime}(x^\prime)
	\stackrel{\eqref{eq:pb-proof02} }{\preceq}
	f^{\cU^\prime}(x)
  	\stackrel{\eqref{eq:U'relaxation}}{\leq}
	f^{\cU}(x)
	\text.
\end{equation*}
This contradicts the assumption of $x$ being efficient for $P(\cU)$.
\end{proof}

Checking \emph{all} efficient solutions of a multiobjective problem is computationally hard (or even impossible).
Thus, in the next result we strengthen the above theorem in a fashion that \eqref{eq:WCincluded} must only be satisfied for all solutions from a representative set.

\begin{thm} \label{thm:Konvergenzaussage_verstaerkt}
Let the domination property \eqref{eq:domination-property} be satisfied for $P(\cU)$ and $P(\cU^\prime)$.
If there is a representative set $\Rep^\prime$ of efficient solutions for $P(\cU^\prime)$ whose elements satisfy \eqref{eq:WCincluded}, then we have:
\begin{enumerate}[label=(\roman*)]
\item
$x \in \Rep^\prime$ $\Rightarrow$ $x$ is efficient for  $P(\cU)$,
\item
$x$ is efficient for $P(\cU)$ $\Rightarrow$ $x$ is efficient for  $P(\cU^\prime)$, and
\item
$\Rep^\prime$ is a representative set of efficient solutions to $P(\cU)$.
\end{enumerate}
\end{thm}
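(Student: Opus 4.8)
The plan is to derive the three claims from Theorem~\ref{thm:Konvergenzaussage} wherever a direct appeal is possible, and to patch in a short replacement argument wherever the weaker hypothesis here --- \eqref{eq:WCincluded} only for the representative set $\Rep^\prime$, not for all efficient solutions of $P(\cU^\prime)$ --- blocks the direct route. Part~(i) is the easy one: every $x\in\Rep^\prime$ is efficient for $P(\cU^\prime)$ (a representative set consists of efficient solutions, since its image lies in $\cY_\textup{N}$) and satisfies \eqref{eq:WCincluded} by assumption, so the ``$\Rightarrow$'' half of Theorem~\ref{thm:Konvergenzaussage} applies verbatim and yields that $x$ is efficient for $P(\cU)$.

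For part~(ii) I would argue by contradiction, imitating the ``$\Leftarrow$'' part of Theorem~\ref{thm:Konvergenzaussage} but inserting a pass-to-$\Rep^\prime$ step. Suppose $x$ is efficient for $P(\cU)$ but not for $P(\cU^\prime)$. Then $f^{\cU^\prime}(x)$ is a dominated point of $P(\cU^\prime)$, so the domination property \eqref{eq:domination-property} for $P(\cU^\prime)$ provides an efficient solution of $P(\cU^\prime)$ whose image dominates $f^{\cU^\prime}(x)$; using that $\Rep^\prime$ is a representative set for $P(\cU^\prime)$, I may pick $x''\in\Rep^\prime$ with that same image, so $f^{\cU^\prime}(x'')\preceq f^{\cU^\prime}(x)$. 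Since $x''\in\Rep^\prime$ satisfies \eqref{eq:WCincluded}, we have $f^{\cU}(x'')=f^{\cU^\prime}(x'')$, and combining with $f^{\cU^\prime}(x)\leq f^{\cU}(x)$ from \eqref{eq:U'relaxation} gives $f^{\cU}(x'')=f^{\cU^\prime}(x'')\preceq f^{\cU^\prime}(x)\leq f^{\cU}(x)$, hence $f^{\cU}(x'')\preceq f^{\cU}(x)$ with $x''\neq x$, contradicting efficiency of $x$ for $P(\cU)$.

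Part~(iii) would then follow by combining (i) and (ii) with the representative-set property of $\Rep^\prime$ for $P(\cU^\prime)$. The inclusion $f^{\cU}(\Rep^\prime)\subseteq\cY_\textup{N}$ of $P(\cU)$ is exactly (i). For the reverse inclusion, take a nondominated point $y=f^{\cU}(x)$ of $P(\cU)$ with $x$ efficient for $P(\cU)$; by (ii), $x$ is also efficient for $P(\cU^\prime)$, so there is $x'\in\Rep^\prime$ with $f^{\cU^\prime}(x')=f^{\cU^\prime}(x)$. Then $f^{\cU}(x')=f^{\cU^\prime}(x')=f^{\cU^\prime}(x)\leq f^{\cU}(x)=y$, using \eqref{eq:WCincluded} for $x'\in\Rep^\prime$ and \eqref{eq:U'relaxation} for $x$; since $y$ is nondominated and $x'\in\cX$ is feasible, this forces $f^{\cU}(x')=y$, so $y\in f^{\cU}(\Rep^\prime)$.

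The main obstacle --- and the only place the argument is not completely routine --- is the replacement step used in (ii) and (iii): one cannot simply invoke the ``$\Leftrightarrow$'' statement of Theorem~\ref{thm:Konvergenzaussage}, because \eqref{eq:WCincluded} is not assumed for every efficient solution of $P(\cU^\prime)$, only for the elements of $\Rep^\prime$. The remedy is to always move to a solution in $\Rep^\prime$ with the same $P(\cU^\prime)$-image; this is harmless precisely because \eqref{eq:WCincluded} and \eqref{eq:U'relaxation} are statements about objective vectors rather than about the solutions themselves, so every domination relation needed survives the replacement. I expect no hidden difficulty beyond being careful that each $\preceq$/$\leq$ composition in the chains really does yield a strict domination where one is claimed.
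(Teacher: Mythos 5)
Your proposal is correct and follows essentially the same route as the paper's proof: part (i) by invoking Theorem~\ref{thm:Konvergenzaussage}, part (ii) by using \eqref{eq:domination-property} and then passing to an element of $\Rep^\prime$ with the same $P(\cU^\prime)$-image so that \eqref{eq:WCincluded} and \eqref{eq:U'relaxation} give the dominating chain, and part (iii) by the two image inclusions with nondominance forcing equality. No substantive differences from the paper's argument.
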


\begin{proof} \
\begin{enumerate}[label=(\roman*)]
\item Let $x \in \Rep^\prime$. 
In particular, $x$ is efficient for $P(\cU^\prime)$ and by assumption it satisfies \eqref{eq:WCincluded}. 
We can hence apply Theorem~\ref{thm:Konvergenzaussage} and conclude that $x$ is efficient for $P(\cU)$.    
    
\item
Let $x$ be efficient for $P(\cU)$ and assume $x$ is not efficient for $P(\cU^\prime)$. 
Due to the domination property, there is
is a solution $x^\prime$ that satisfies $ f^{\cU^\prime}(x^\prime) \preceq f^{\cU^\prime}(x)$. Moreover, since $\Rep^\prime$ is a representative set for $P(\cU^\prime)$ we can choose $x^\prime \in \Rep^\prime$. Hence,
\eqref{eq:WCincluded} holds for $x^\prime$ and we receive

\begin{equation*}
	f^{\cU}(x^\prime)
	\stackrel{\eqref{eq:WCincluded}}{=}
	f^{\cU^\prime}(x^\prime)
	\stackrel{}{\preceq}
	f^{\cU^\prime}(x)
  	\stackrel{\eqref{eq:U'relaxation}}{\leq}
	f^{\cU}(x)
	\text.
\end{equation*}
This contradicts efficiency of $x$ for $P(\cU)$.
\item
Let $\Rep \subset \cX$ be a representative set of efficient solutions for $P(\cU)$.
We show that $f^\cU(\Rep^\prime) = f^\cU(\Rep)$.
\\
\underline{$\subset$:}
Let $y^\prime \in f^\cU(\Rep^\prime)$.
Then $y^\prime=f^\cU(x^\prime)$ for some $x^\prime \in \Rep^\prime$.
According to (i), $x^\prime$ is efficient for $P(\cU)$, hence 
$y^\prime \in f^\cU(\Rep)$.
\\
\underline{$\supset$:}
Let $y \in f^\cU(\Rep)$.
Then $y=f^\cU(x)$ for some $x$ that is efficient for $P(\cU)$.
According to (ii), $x$  is also efficient for $P(\cU^\prime)$.
Hence, $x^\prime \in \Rep^\prime$ exists such that
$f^{\cU^\prime}(x) = f^{\cU^\prime}(x^\prime)$.
This leads to
\[
y=f^\cU(x)
\stackrel{\eqref{eq:U'relaxation}}{\geq}
f^{\cU^\prime}(x)
=
f^{\cU^\prime}(x^\prime)
\stackrel{\eqref{eq:WCincluded}}{=}
f^{\cU}(x^\prime)
\textup.
\]
Since by assumption $y$ is nondominated for $P(\cU)$, equality must hold true.
Thus, $y=f^{\cU}(x^\prime)$ for $x^\prime \in \Rep^\prime$ and, consequently,
$y \in f^\cU(\Rep^\prime)$.
\end{enumerate}
\end{proof}

Theorem~\ref{thm:Konvergenzaussage_verstaerkt} shows that it is 
not necessary to check the worst-case property~\eqref{eq:WCincluded} for
\emph{all} efficient solutions, rather it is sufficient to check it only
for a representative set.
However, a representative set may still be infinite, even for linear problems.

In Theorem~\ref{thm:Konvergenzaussage_verstaerkt-esn} we show that
the statement of Theorem~\ref{thm:Konvergenzaussage_verstaerkt} remains valid
if we replace the set of all efficient solutions not only by a representative set, but even by a
representative set of only their extreme supported solutions.
Recall that in Section~\ref{sec:DichotomicSearch-intro} with dichotomic search we provided an algorithm for computing such a representative set of extreme supported solutions.
In preparation for Theorem~\ref{thm:Konvergenzaussage_verstaerkt-esn} we need the following lemma and corollary that investigate the relation of extreme supported nondominated points with the set of all images $\cY=f^\cU(\cX)$.

\begin{lemma} \label{lem:WeaklyDomByNontrivConvCombOfESN}
Let a multi-objective optimization problem \eqref{eq:MultiobjectiveProblem} with $\cY \subsetneq \R^p$ compact be given and 
let $\cY_\textup{ESN} \not = \emptyset$ be its set of extreme supported nondominated points.
We assume that $\cY_\textup{ESN}$ is finite.
Then $\cY \subseteq \conv(\cY_\textup{ESN})+\R^p_{\geq}$ holds.
\end{lemma}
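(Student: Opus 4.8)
The plan is to show that an arbitrary point $y \in \cY$ is dominated (in the weak sense $\leq$) by some point of $\conv(\cY_\textup{ESN})$; equivalently, that $y \notin \conv(\cY_\textup{ESN}) + \R^p_\geq$ leads to a contradiction. First I would use the domination property, which holds here since $\cY$ is compact (invoke Lemma~\ref{lem:DomProperty-IdealPoint} with $g$ continuous on a compact $\cX$, or simply work directly with $\cY_\textup{N}$): given $y \in \cY$, there is a nondominated point $y^{\textup N} \in \cY_\textup{N}$ with $y^{\textup N} \leq y$. So it suffices to prove the inclusion for nondominated points, i.e.\ to show $\cY_\textup{N} \subseteq \conv(\cY_\textup{ESN}) + \R^p_\geq$, and then chain the two inequalities.

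The core step is a separating-hyperplane argument. Suppose, for contradiction, that some $y^{\textup N} \in \cY_\textup{N}$ satisfies $y^{\textup N} \notin \conv(\cY_\textup{ESN}) + \R^p_\geq$. Since $\cY_\textup{ESN}$ is finite, $\conv(\cY_\textup{ESN})$ is a polytope and $\conv(\cY_\textup{ESN}) + \R^p_\geq$ is a closed convex set; hence by the separating hyperplane theorem there is a $\lambda \in \R^p \setminus \{0\}$ with $\lambda^t y^{\textup N} < \lambda^t z$ for all $z \in \conv(\cY_\textup{ESN}) + \R^p_\geq$. Because the recession cone $\R^p_\geq$ lies on the ``large'' side, the functional $\lambda$ must be bounded below on it, which forces $\lambda \in \R^p_\succeq 0$ (each component nonnegative); moreover $\lambda \neq 0$. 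In particular $\lambda^t y^{\textup N} < \lambda^t v$ for every extreme supported nondominated point $v \in \cY_\textup{ESN}$.

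Now I derive the contradiction by analysing the weighted-sum problem $\min_{y \in \cY} \lambda^t y$. Since $\cY$ is compact and $\lambda^t(\cdot)$ continuous, this minimum is attained, say at $y^\ast \in \cY$; and by the domination property we may take $y^\ast \in \cY_\textup{N}$. By construction $\lambda^t y^\ast \leq \lambda^t y^{\textup N} < \lambda^t v$ for all $v \in \cY_\textup{ESN}$, so $y^\ast \notin \cY_\textup{ESN}$. The plan is then to contradict the definition of extreme supported nondominated point used in the paper: a nondominated point that minimizes $\lambda^t y$ over $\cY$ for some $\lambda \in \R^p_{\succeq 0}\setminus\{0\}$, and is an \emph{extreme} optimizer of that problem among the nondominated points, should itself be extreme supported; here one has to be a little careful because the paper's definition says $y$ is extreme supported nondominated iff it is \emph{not} a convex combination $\sum \lambda_i y^{(i)}$ of nondominated points $y^{(i)} \in \cY_\textup{N}\setminus\{y\}$ with $\sum\lambda_i y^{(i)} \leq y$. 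So the cleaner route is: among all minimizers of $\lambda^t y$ in $\cY_\textup{N}$, if none is in $\cY_\textup{ESN}$ then iterate — any minimizer that is not extreme supported is dominated by a convex combination of strictly cheaper-or-equal nondominated points, but $\lambda^t$ is constant on the optimal face, so those points are again minimizers, and finiteness of $\cY_\textup{ESN}$ (together with the fact that the optimal face is a polytope-like compact set) yields that at least one extreme supported nondominated minimizer exists, contradicting $\lambda^t v > \lambda^t y^{\textup N} \geq \lambda^t y^\ast$ for all $v\in\cY_\textup{ESN}$.

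\textbf{Main obstacle.} The delicate part is the last step: converting ``there is an optimal weighted-sum solution'' into ``there is an \emph{extreme supported} optimal solution,'' using only the paper's combinatorial definition of $\cY_\textup{ESN}$ rather than classical polyhedral facts. One must show that the set of nondominated minimizers of $\lambda^t y$ over compact $\cY$ contains a point that is not a nontrivial convex combination of \emph{other} nondominated points lying weakly below it — and rule out an infinite descent. I would handle this by choosing, among all nondominated minimizers, one that is lexicographically minimal (or minimal with respect to some secondary strictly-monotone functional), and arguing that such a point cannot be weakly dominated by a convex combination of distinct nondominated points, hence lies in $\cY_\textup{ESN}$; the finiteness assumption on $\cY_\textup{ESN}$ is what guarantees this selection is consistent and terminates. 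Everything else — the separating hyperplane, the sign of $\lambda$, and the chaining through the domination property — is routine.
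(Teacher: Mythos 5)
Your plan is correct and follows essentially the same route as the paper's proof: separate the offending point from $\conv(\cY_\textup{ESN})+\R^p_{\geq}$ (closedness coming from finiteness of $\cY_\textup{ESN}$), deduce the normal vector is componentwise nonnegative, pass to the weighted-sum argmin set over the compact $\cY$, and select the lexicographically minimal minimizer, which cannot be weakly dominated by a nontrivial convex combination of other nondominated points (those points are forced into the argmin set by nonnegativity of the weights, and lex-minimality then gives the contradiction). The step you flag as the main obstacle is resolved exactly as in the paper, so no genuinely different machinery is involved.
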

\begin{proof}
Assume there is a $\bar{y} \in \cY$ that does not lie in $\conv(\cY_\textup{ESN})+ \R^p$.
We then can show that there is also $\hat{y}$ outside of $\conv(\cY_\textup{ESN})+ \R^p$ which is extreme supported nondominated, a contradiction.

So, assume to the contrary that $\bar{y} \in \cY \setminus (\conv(\cY_\textup{ESN})+\R^p_{\geq})$ exists.
Then the sets $\{\bar{y}\}$
and 
$\conv(\cY_\textup{ESN}) + \R^p_{\geq1}$
are disjoint, nonempty, closed and convex sets.
Hence, a separating hyperplane exists (see \cite{boyd2004convex}),
i.e., $\nu \in \R^p \setminus \{0\}$ and $s \in \R$ exist such that
\begin{equation} \label{eq:separation}
\nu^t \bar{y} < s < \nu^t y, \; \forall y \in \conv(\cY_\textup{ESN})+\R^p_{\geq}
\textup{.}
\end{equation}

The elements of $\conv(\cY_\textup{ESN})+\R^p_{\geq}$ can get arbitrarily big in each component, 
hence $\nu_i \geq 0$ for all $i=1,2,\dots,p$.
Let now $z_\nu \coloneqq \min\{ \nu^t y \colon y \in \cY\}$ and $\cY_\nu 
\coloneqq  \argmin \{ \nu^t y \colon y \in \cY \}$.
Since $\bar{y} \in \cY$, we get
\begin{equation}
\nu^t y^{\ast} \leq \nu^t \bar{y}< s, \; \forall y^{\ast} \in \cY_\nu
.
\end{equation}
Together with \eqref{eq:separation} this shows that the elements in $\cY_\nu$ can be separated from $\conv(\cY_\textup{ESN})+\R_\geq^p$
and, hence, cannot be extreme supported nondominated themselves.
Specifically, the lexicographic minimum, $\hat{y} \coloneqq \lexmin_{y \in \cY_\nu}$, i.e., $\hat{y}_j = \min \{y_j \colon y \in \cY_\nu, y_1=\hat{y}_1, \dots, y_{j-1}=\hat{y}_{j-1}\}$, $j=1,2,\dots,p$, is not extreme supported nondominated (the existence of this point follows from compactness of $\cY$).

Hence, a nontrivial convex combination of nondominated points $y^{(1)},\dots,y^{(n)} \in \cY$ exists such that
\begin{equation} \label{eq:WeaklyDomByNontrivConvComb}
\hat{y} \geq \sum_{i=1}^n \lambda_i y^{(i)}
\textup.
\end{equation}

Now we assume that $y^{(i)} \not \in \cY_\nu$ for at least one $i=1,2,\dots,n$. Wlog., assume $y^{(1)} \not \in \cY_\nu$.
Then 
\begin{equation} \label{eq:WeaklyDomByNontrivConvComb_Nu}
\nu^t \sum_{i=1}^n \lambda_i y^{(i)}
= 
\lambda_1 \underbrace{\nu^t y^{(1)}}_{> z_\nu} 
+ \sum_{i=2}^n \underbrace{\nu^t \lambda_i y^{(i)}}_{\geq z_\nu}
>
\sum_{i=1}^n \lambda_i z_\nu
=
z_\nu
=
\nu^t \hat{y}
\textup.
\end{equation}
Since $\nu_i \geq 0$, $i=1,2,\dots,n$, \eqref{eq:WeaklyDomByNontrivConvComb_Nu} contradicts \eqref{eq:WeaklyDomByNontrivConvComb}. Thus, our assumption that $y^{(i)} \not \in \cY_\nu$ for at least one $i=1,2,\dots,n$ is contradicted
and we have that $y^{(i)} \in \cY_\nu$ for all $i=1,2,\dots,n$.

Consequently, a nontrivial convex combination only consisting of nondominated points $y^{(1)},\dots,y^{(n)} \in \cY_\nu \subseteq \cY$ exists such that \eqref{eq:WeaklyDomByNontrivConvComb} holds.
This, however, is not possible since, by definition, $\hat{y}$ is the lexicographic minimum of $\cY_\nu$
and thus all other elements of $\cY_\nu$ lie in the lexicographic cone
$\hat{y} + \{y \in \R^p \colon y_1=y_2 = \dots y_i=0, y_{i+1} > 0 \textup{ for some } i=0,1,\dots,p\}$.
\end{proof}

The following corollary will be used in the proof of the subsequent theorem.

\begin{corol} \label{cor:WeaklyDomByNontrivConvCombOfESN}
Under the assumptions of Lemma~\ref{lem:WeaklyDomByNontrivConvCombOfESN} for any $y \in \cY \setminus \cY_\textup{ESN}$, a nontrivial convex combination
\begin{equation*}
\sum_{i=1}^n \lambda_i y^{(i)} \leq y
\end{equation*}
with $y^{(1)}, \dots, y^{(n)} \in \cY_\textup{ESN}$ exists.
\end{corol}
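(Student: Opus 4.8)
The plan is to obtain the corollary as an essentially immediate consequence of Lemma~\ref{lem:WeaklyDomByNontrivConvCombOfESN}. The geometric content we actually need is precisely the inclusion $\cY \subseteq \conv(\cY_\textup{ESN}) + \R^p_{\geq}$ established there: every image point is weakly dominated by a point of the convex hull of the finitely many extreme supported nondominated points.

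First I would fix an arbitrary $y \in \cY \setminus \cY_\textup{ESN}$ and invoke that inclusion to write $y = z + r$ with $z \in \conv(\cY_\textup{ESN})$ and $r \in \R^p_{\geq}$; equivalently, there is a point $z \in \conv(\cY_\textup{ESN})$ with $z \leq y$. Expressing $z$ as a convex combination of extreme points of $\conv(\cY_\textup{ESN})$ --- all of which belong to the finite set $\cY_\textup{ESN}$, and by Carathéodory's theorem at most $p+1$ of them are needed --- produces points $y^{(1)},\dots,y^{(n)} \in \cY_\textup{ESN}$ and multipliers $\lambda_1,\dots,\lambda_n \geq 0$ with $\sum_{i=1}^n \lambda_i = 1$ and $\sum_{i=1}^n \lambda_i y^{(i)} = z \leq y$.

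It then only remains to check that this combination is nontrivial in the sense used in the definition of extreme supported nondominance, i.e., that it involves no copy of $y$ itself. This is exactly where the hypothesis $y \notin \cY_\textup{ESN}$ enters: each $y^{(i)}$ lies in $\cY_\textup{ESN}$, hence $y^{(i)} \neq y$ for every $i$, and the combination is the desired witness. I do not anticipate a genuine obstacle; the only things needing a moment's care are (a) keeping straight that ``nontrivial'' here means that the points differ from $y$, not that $n \geq 2$, and (b) confirming that the standing hypotheses of Lemma~\ref{lem:WeaklyDomByNontrivConvCombOfESN} --- $\cY$ compact with $\cY \subsetneq \R^p$, and $\cY_\textup{ESN}$ nonempty and finite --- are indeed in force, which is precisely what the phrase ``under the assumptions of Lemma~\ref{lem:WeaklyDomByNontrivConvCombOfESN}'' guarantees.
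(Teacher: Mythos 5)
Your proposal is correct and follows exactly the route the paper intends: the corollary is an immediate consequence of the inclusion $\cY \subseteq \conv(\cY_\textup{ESN})+\R^p_{\geq}$ from Lemma~\ref{lem:WeaklyDomByNontrivConvCombOfESN}, writing $y \geq z$ for some $z \in \conv(\cY_\textup{ESN})$, expanding $z$ as a convex combination of points of the finite set $\cY_\textup{ESN}$, and observing that nontriviality holds because $y \notin \cY_\textup{ESN}$ forces $y^{(i)} \neq y$ for all $i$. The Carath\'eodory remark is harmless but not needed.
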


We can now utilize the above corollary and show that the statement of Theorem~\ref{thm:Konvergenzaussage_verstaerkt} remains valid even if only representative sets of extreme supported efficient solutions are considered.

\begin{thm} \label{thm:Konvergenzaussage_verstaerkt-esn} 
Let the domination property \eqref{eq:domination-property} be satisfied for $P(\cU)$ and $P(\cU^\prime)$.
If $\cY$ is compact and there is a finite representative set $\Rep^\prime_\text{ESE}$ of extreme supported efficient solutions for $P(\cU^\prime)$ whose elements satisfy \eqref{eq:WCincluded}, then
\begin{enumerate}[label=(\roman*)]
\item
$x \in \Rep^\prime_\text{ESE}$ $\Rightarrow$ $x$ is extreme supported efficient for  $P(\cU)$,
\item
$x$ is extreme supported efficient for $P(\cU)$ $\Rightarrow$ $x$ is extreme supported efficient for  $P(\cU^\prime)$, and
\item
$\Rep^\prime_\text{ESE}$ is a representative set of extreme supported efficient solutions to $P(\cU)$.
\end{enumerate}
\end{thm}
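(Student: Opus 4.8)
The plan is to mimic the structure of the proof of Theorem~\ref{thm:Konvergenzaussage_verstaerkt}, replacing the use of the plain domination property with Corollary~\ref{cor:WeaklyDomByNontrivConvCombOfESN}, which provides a \emph{convex-combination} domination statement using only extreme supported nondominated points. First I would prove (i): let $x \in \Rep^\prime_\text{ESE}$, so $x$ is extreme supported efficient for $P(\cU^\prime)$ and satisfies \eqref{eq:WCincluded}, hence $f^{\cU^\prime}(x) = f^\cU(x)$. Suppose for contradiction that $y \coloneqq f^\cU(x)$ is not extreme supported nondominated for $P(\cU)$. Since $\cY = f^\cU(\cX)$ is compact (this is where I would invoke the compactness hypothesis, together with Corollary~\ref{cor:PropertiesForBRO}-type reasoning to know $\cY_\textup{ESN}$ is finite), Corollary~\ref{cor:WeaklyDomByNontrivConvCombOfESN} gives a nontrivial convex combination $\sum_i \lambda_i y^{(i)} \leq y$ with $y^{(i)} \in \cY_\textup{ESN}(P(\cU))$. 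Each $y^{(i)} = f^\cU(x^{(i)})$ for some extreme supported efficient $x^{(i)}$; by part (ii) (proved next, or bootstrapped carefully) these $x^{(i)}$ are also extreme supported efficient for $P(\cU^\prime)$, and since $\Rep^\prime_\text{ESE}$ is representative, there are $\bar{x}^{(i)} \in \Rep^\prime_\text{ESE}$ with $f^{\cU^\prime}(\bar{x}^{(i)}) = f^{\cU^\prime}(x^{(i)})$; by \eqref{eq:WCincluded} for $\bar{x}^{(i)}$ this equals $f^\cU(\bar{x}^{(i)})$, and by \eqref{eq:U'relaxation} this is $\leq f^\cU(x^{(i)}) = y^{(i)}$. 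Hence $\sum_i \lambda_i f^{\cU^\prime}(\bar{x}^{(i)}) \leq y = f^\cU(x) = f^{\cU^\prime}(x)$, contradicting that $f^{\cU^\prime}(x)$ is extreme supported nondominated for $P(\cU^\prime)$ (no such nontrivial convex combination of other nondominated images may lie below it). A small technical point: I must make sure the $\bar{x}^{(i)}$ are genuinely \emph{distinct} from $x$ in objective space, or else argue via the precise definition (the convex combination of points in $\cY \setminus \{y\}$); if $y$ coincides with some $y^{(i)}$ one can drop that term and renormalize, or note $y \in \cY_\textup{ESN}$ directly.

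For (ii), I would follow the $\Leftarrow$ pattern of Theorem~\ref{thm:Konvergenzaussage}, upgraded to extreme supported: let $x$ be extreme supported efficient for $P(\cU)$ and suppose $x$ is not extreme supported efficient for $P(\cU^\prime)$. I first note $x$ is at least efficient for $P(\cU^\prime)$ — this follows because, by Theorem~\ref{thm:Konvergenzaussage_verstaerkt}(ii) applied with $\Rep^\prime$ taken to be (the efficient closure of) $\Rep^\prime_\text{ESE}$... but wait, $\Rep^\prime_\text{ESE}$ is only a representative set of \emph{extreme supported} efficient solutions, not of all efficient solutions, so Theorem~\ref{thm:Konvergenzaussage_verstaerkt} does not directly apply. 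Instead I would argue directly: apply Corollary~\ref{cor:WeaklyDomByNontrivConvCombOfESN} in $P(\cU^\prime)$ to $f^{\cU^\prime}(x)$ (assuming it is not in $\cY_\textup{ESN}(P(\cU^\prime))$), obtaining $\sum_i \lambda_i y^{(i)} \leq f^{\cU^\prime}(x)$ with $y^{(i)} \in \cY_\textup{ESN}(P(\cU^\prime))$ realized by elements of $\Rep^\prime_\text{ESE}$ (using representativity), each satisfying \eqref{eq:WCincluded}. Then $\sum_i \lambda_i f^\cU(x^{(i)}) = \sum_i \lambda_i f^{\cU^\prime}(x^{(i)}) \leq f^{\cU^\prime}(x) \leq f^\cU(x)$ by \eqref{eq:U'relaxation}, a nontrivial convex combination of nondominated images of $P(\cU)$ (nondominated by part (i)) lying weakly below $f^\cU(x)$ — contradicting that $x$ is extreme supported efficient for $P(\cU)$, provided again the realized images differ from $f^\cU(x)$, which I handle by the same renormalization/degenerate-case argument.

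Part (iii) is then bookkeeping, exactly parallel to Theorem~\ref{thm:Konvergenzaussage_verstaerkt}(iii): I would show $f^\cU(\Rep^\prime_\text{ESE}) = \cY_\textup{ESN}(P(\cU))$. The inclusion ``$\subseteq$'' is immediate from (i). For ``$\supseteq$'', take $y \in \cY_\textup{ESN}(P(\cU))$, say $y = f^\cU(x)$ with $x$ extreme supported efficient for $P(\cU)$; by (ii) $x$ is extreme supported efficient for $P(\cU^\prime)$, so by representativity of $\Rep^\prime_\text{ESE}$ there is $x^\prime \in \Rep^\prime_\text{ESE}$ with $f^{\cU^\prime}(x^\prime) = f^{\cU^\prime}(x)$; then $y = f^\cU(x) \geq f^{\cU^\prime}(x) = f^{\cU^\prime}(x^\prime) = f^\cU(x^\prime)$ using \eqref{eq:U'relaxation} and \eqref{eq:WCincluded}, and since $y$ is nondominated for $P(\cU)$ and $f^\cU(x^\prime)$ is an attained image, equality holds, so $y \in f^\cU(\Rep^\prime_\text{ESE})$.

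The main obstacle I anticipate is the circularity between (i) and (ii): the argument for (i) as sketched wants (ii), and (ii)'s argument implicitly wants (i) (to know the $y^{(i)}$ are nondominated for $P(\cU)$). The clean fix is to prove (ii) \emph{first} in a self-contained way — in (ii) the convex combination is built from elements of $\Rep^\prime_\text{ESE}$ whose $f^\cU$-images, via \eqref{eq:WCincluded} and \eqref{eq:U'relaxation}, are forced to be nondominated for $P(\cU)$ as a by-product of the contradiction hypothesis (if one of them were dominated in $P(\cU)$, replace it by a dominating efficient point using domination property and push the inequality through) — or, more simply, establish a preliminary claim that every $x^\prime \in \Rep^\prime_\text{ESE}$ has $f^\cU(x^\prime)$ nondominated for $P(\cU)$ by the same weighted-sum argument as in Theorem~\ref{thm:Konvergenzaussage}($\Rightarrow$), which needs only \eqref{eq:WCincluded} and \eqref{eq:U'relaxation}, not extremality. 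With that claim in hand, (i), (ii), (iii) go through in that order without circularity. A secondary nuisance is verifying the hypotheses of Lemma~\ref{lem:WeaklyDomByNontrivConvCombOfESN}/Corollary~\ref{cor:WeaklyDomByNontrivConvCombOfESN} hold for both $P(\cU)$ and $P(\cU^\prime)$: compactness of $\cY$ is assumed, but one also needs $\cY_\textup{ESN}$ finite and nonempty for each — finiteness should be argued from the piecewise-linear/polyhedral structure underlying \eqref{eq:GrundproblemBRO} (or simply assumed, as the theorem already posits a finite $\Rep^\prime_\text{ESE}$), and nonemptiness from the domination and ideal-point properties.
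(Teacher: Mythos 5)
Your parts (ii) and (iii) are essentially the paper's own argument: (iii) is verbatim the same bookkeeping, and (ii) is exactly the paper's route (Corollary~\ref{cor:WeaklyDomByNontrivConvCombOfESN} applied to $P(\cU^\prime)$, whose extreme supported nondominated points are realized by the finite set $\Rep^\prime_\text{ESE}$, then \eqref{eq:WCincluded} and \eqref{eq:U'relaxation}, with the coincidence case $f^{\cU}(x^{(i)})=f^{\cU}(x)$ dispatched through efficiency). Your preliminary claim -- that every element of $\Rep^\prime_\text{ESE}$ is efficient for $P(\cU)$ by Theorem~\ref{thm:Konvergenzaussage}($\Rightarrow$) -- is a legitimate substitute for the nondominatedness that the paper obtains from its part (i), so the ordering ``preliminary claim, then (ii)'' does work.

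The genuine gap is in your part (i). You prove it by applying Corollary~\ref{cor:WeaklyDomByNontrivConvCombOfESN} to $P(\cU)$, which requires $\cY_\textup{ESN}$ of $P(\cU)$ to be finite (and nonempty). The theorem's hypotheses only give you finiteness on the $P(\cU^\prime)$ side, via the finite set $\Rep^\prime_\text{ESE}$; finiteness of the extreme supported nondominated set of $P(\cU)$ is neither assumed nor derivable from the domination property plus compactness of $\cY$, and your fallback (``argue from the polyhedral structure of \eqref{eq:GrundproblemBRO}, or simply assume it'') either leaves the stated generality of the theorem or changes its statement. Moreover, your (i) is routed through (ii) and through representativity of $\Rep^\prime_\text{ESE}$, which is why you need the combination to consist of \emph{extreme supported} nondominated points of $P(\cU)$ in the first place; the preliminary-claim fix repairs (ii), but it does not remove either the dependence of (i) on (ii) or the unjustified invocation of the corollary for $P(\cU)$. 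The paper avoids all of this: for (i) it argues directly from the definition. If $x\in\Rep^\prime_\text{ESE}$ were not extreme supported efficient for $P(\cU)$, the negation of the definition already hands you a nontrivial convex combination of points $f^{\cU}(x^\prime_i)$, nondominated for $P(\cU)$ and different from $f^{\cU}(x)$, with $\sum_i\lambda_i f^{\cU}(x^\prime_i)\leq f^{\cU}(x)$; then
\begin{equation*}
\sum_i \lambda_i f^{\cU^\prime}(x^\prime_i)
\stackrel{\eqref{eq:U'relaxation}}{\leq}
\sum_i \lambda_i f^{\cU}(x^\prime_i)
\leq f^{\cU}(x)
\stackrel{\eqref{eq:WCincluded}}{=}
f^{\cU^\prime}(x)
\end{equation*}
contradicts extreme supported efficiency of $x$ for $P(\cU^\prime)$, unless $f^{\cU^\prime}(x^\prime_i)=f^{\cU^\prime}(x)$ for some $i$, in which case efficiency of $x^\prime_i$ for $P(\cU)$ forces $f^{\cU}(x^\prime_i)=f^{\cU}(x)$, contradicting the choice of the combination. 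This needs no corollary for $P(\cU)$, no finiteness beyond the hypotheses, and no appeal to (ii); you should replace your argument for (i) accordingly (keeping, if you wish, your remark that dominated images $f^{\cU^\prime}(x^\prime_i)$ can be replaced by nondominated ones via \eqref{eq:domination-property} for $P(\cU^\prime)$).
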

\begin{proof}
\begin{enumerate}[label=(\roman*)]
\item
Let $x \in \Rep^\prime_\text{ESE}$.
Assume to the contrary that $x$ is not extreme supported efficient for $P(\cU)$, i.e., there exists a nontrivial convex combination of solutions efficient for $P(\cU)$ $x^\prime_1, \dots, x^\prime_n \in \cX$, and $\lambda \in \R_{\geq 0}$, $\sum_{i=1}^{n} \lambda_i=1$ such that
\begin{equation} \label{eq:pb-proof11}
	\sum_{i=1}^{n} \lambda_i f^\cU(x^\prime_i)
	\leq 
	f^\cU(x)
	\text,
\end{equation}
and $f^\cU(x^\prime_i) \not = f^\cU(x)$ for all $i=1,2,\dots,n$.

$\cU^\prime \subseteq \cU$, hence $f^{\cU^\prime}(x^\prime_i) \leq f^{\cU}(x^\prime_i)$, $i=1,2\dots,n$, see \eqref{eq:U'relaxation}. This leads to
\begin{align} \label{eq:pb-proof12}
	\sum_{i=1}^{n} \lambda_i f^{\cU^\prime}(x^\prime_i)
    \stackrel{\eqref{eq:U'relaxation}}{\leq}
	\sum_{i=1}^{n} \lambda_i f^{\cU}(x^\prime_i)
	\stackrel{\eqref{eq:pb-proof11}}{\leq}
	f^{\cU}(x)
	\stackrel{\eqref{eq:WCincluded}}{=}
	f^{\cU^\prime}(x)
	.
\end{align}
Hence, extreme supported efficiency of $x$ for $P(\cU^\prime)$ is contradicted
or 

\begin{equation} \label{eq:pb-proof13}
f^{\cU^\prime}(x^\prime_i)=f^{\cU^\prime}(x) \textup{ for at least one } i=1,2,\dots,n
\end{equation}
must hold.
Assume that \eqref{eq:pb-proof13} holds.
Then
\begin{equation*}
f^{\cU}(x)
\stackrel{\eqref{eq:WCincluded}}{=}
f^{\cU^\prime}(x)
\stackrel{\eqref{eq:pb-proof13}}{=}
f^{\cU^\prime}(x_i^\prime)
\stackrel{\eqref{eq:U'relaxation}}{\leq}
f^{\cU}(x_i^\prime)
\end{equation*}
follows.
Since $x_i^\prime$ is efficient for $P(\cU)$, equality holds. Hence, $f^\cU(x^\prime_i) \not = f^\cU(x)$ for all $i=1,2,\dots,n$  is contradicted.

\item
Let $x$ be  extreme supported efficient for $P(\cU)$.
Assume to the contrary that $x \in \cX$ is not extreme supported efficient for $P(\cU^\prime)$.
Then Corollary~\ref{cor:WeaklyDomByNontrivConvCombOfESN} can be applied to the problem $P(\cU^\prime)$ with $\cY=f^{\cU^\prime}(\cX)$ and
there exists a nontrivial convex combination $x^\prime_1,\dots,x^\prime_n \in \Rep^\prime_\textup{ESE}$, and $\lambda \in \R_{\geq 0}$, $\sum_{i=1}^{n} \lambda_i=1$ such that
\begin{equation} \label{eq:pb-proof14}
	\sum_{i=1}^{n} \lambda_i f^{\cU^\prime}(x^\prime_i)
	\leq 
	f^{\cU^\prime}(x)
\end{equation}
and $f^{\cU^\prime}(x^\prime_i) \not = f^{\cU^\prime}(x)$ for all $i=1,2,\dots,n$.

Note that since $x^\prime_i \in \Rep^\prime_\textup{ESE}$, $i=1,2,\dots,n$, they satisfy \eqref{eq:WCincluded}. Together with $\cU^\prime \subseteq \cU$ we receive
\begin{equation} \label{eq:pb-proof15}
	\sum_{i=1}^{n} \lambda_i f^\cU(x^\prime_i)
	\stackrel{\eqref{eq:WCincluded}}{=}
	\sum_{i=1}^{n} \lambda_i f^{\cU^\prime}(x^\prime_i)
	\stackrel{\eqref{eq:pb-proof14} }{\leq}
	f^{\cU^\prime}(x)
  	\stackrel{\eqref{eq:U'relaxation}}{\leq}
	f^{\cU}(x)
	\text.
\end{equation}
This contradicts the assumption of $x$ being extreme supported efficient for $P(\cU)$ or
\begin{equation} \label{eq:pb-proof16}
f^{\cU}(x^\prime_i)=f^{\cU}(x) \textup{ for at least one } i=1,2,\dots,n
\end{equation}
must hold.
Assume \eqref{eq:pb-proof16} holds.

Then
\begin{equation*}
f^{\cU^\prime}(x)
\stackrel{\eqref{eq:U'relaxation}}{\leq}
f^{\cU}(x)
\stackrel{\eqref{eq:pb-proof16}}{=}
f^{\cU}(x_i^\prime)
\stackrel{\eqref{eq:WCincluded}}{=}
f^{\cU^\prime}(x_i^\prime)
\end{equation*}
follows.
Since $x_i^\prime$ is efficient for $P(\cU^\prime)$, equality holds. Hence, $f^{\cU^\prime}(x^\prime_i) \not = f^{\cU^\prime}(x)$ for all $i=1,2,\dots,n$ is contradicted.

\item
Let $\Rep_\text{ESE} \subset \cX$ be a representative set of extreme supported efficient solutions for $P(\cU)$.
Analogously to the proof of Theorem~\ref{thm:Konvergenzaussage_verstaerkt}~(iii) we show that
$f^\cU(\Rep_\text{ESE}^\prime) = f^\cU(\Rep_\text{ESE})$.
\\
\underline{$\subset$:}
Let $y^\prime \in f^\cU(\Rep_\text{ESE}^\prime)$.
Then $y^\prime=f^\cU(x^\prime)$ for some $x^\prime \in \Rep_\text{ESE}^\prime$.
According to (i), $x^\prime$ is extreme supported efficient for $P(\cU)$, hence 
$y^\prime \in f^\cU(\Rep_\text{ESE})$.
\\
\underline{$\supset$:}
Let $y \in f^\cU(\Rep_\text{ESE})$.
Then $y=f^\cU(x)$ for some $x$ that is extreme supported efficient for $P(\cU)$.
According to (ii), $x$  is also extreme supported efficient for $P(\cU^\prime)$.
Hence, $x^\prime \in \Rep^\prime_\text{ESE}$ exists such that
$f^{\cU^\prime}(x) = f^{\cU^\prime}(x^\prime)$.
This leads to
\begin{equation*}
y=f^\cU(x)
\stackrel{\eqref{eq:U'relaxation}}{\geq}
f^{\cU^\prime}(x)
=
f^{\cU^\prime}(x^\prime)
\stackrel{\eqref{eq:WCincluded}}{=}
f^{\cU}(x^\prime)
\textup.
\end{equation*}
Since by assumption $y$ is extreme supported nondominated for $P(\cU)$, equality must hold true.
Thus, $y=f^{\cU}(x^\prime)$ for $x^\prime \in \Rep_\text{ESE}^\prime$ and, consequently,
$y \in f^\cU(\Rep_\text{ESE}^\prime)$.

\end{enumerate}
\end{proof}

We can now formulate the multiobjective generalization of optimization-pessimization.

\paragraph{Adaption of optimization-pessimization.}
In order to deal with the multiobjective setting algorithmically, we modify optimization-pessimization  for multiobjective problems as it is described in the following (see also Figure~\ref{fig:Opt-Pess-Scheme-Multiobjective}):
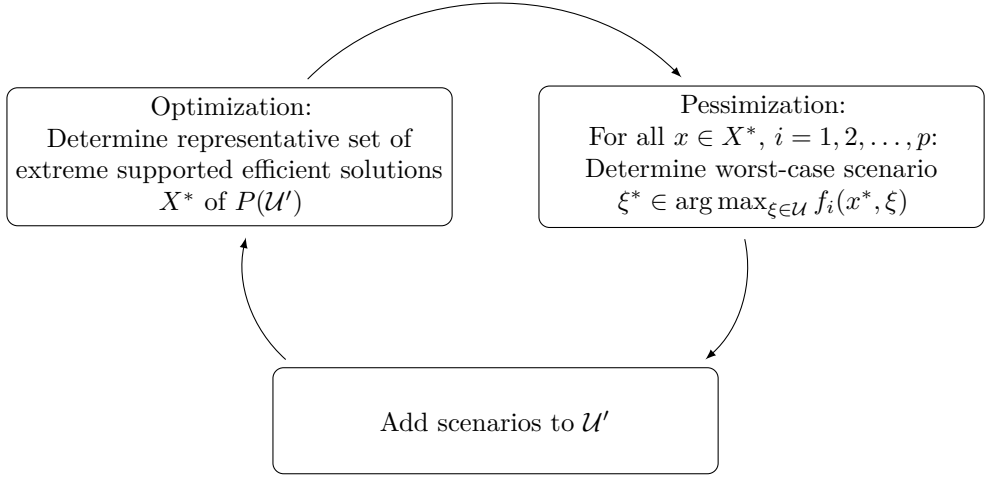
\begin{figure}[hbt!]
\small
\centering
\usetikzlibrary{shapes.geometric,arrows}
\tikzstyle{block} = [rectangle, draw, text width=16em, text badly centered, rounded corners, minimum height=4em] 
\tikzstyle{emptyblock} = []  
\begin{tikzpicture}[node distance=3.5cm, auto, arrow/.style={-latex, shorten >=1ex, shorten <=1ex, bend angle=45}]
\node [emptyblock]				(ctr) {};
\node [block, left of=ctr]		(opt)  {
	Optimization:\\ 
	Determine representative set of extreme supported efficient solutions $X^\ast$ of $P(\cU^\prime)$ 
}; 
\node [block, right of=ctr]		(pess) {
	Pessimization:\\
	For all $x \in X^\ast$, $i=1,2,\dots,p$: \\ Determine worst-case scenario
	$\xi^\ast \in \argmax_{\xi \in \cU} f_i(x^\ast,\xi)$};  
\node [block, below of=ctr, distance=1cm]		(add) {Add scenarios to $\cU^\prime$};  
\draw [arrow, bend angle=45, bend left]  (opt) to (pess);
\draw [arrow, bend angle=30, bend left]  (pess) to (add);
\draw [arrow, bend angle=30, bend left]  (add) to (opt);
\end{tikzpicture}
\caption{Optimization-pessimization for robust multiobjective optimization problems} \label{fig:Opt-Pess-Scheme-Multiobjective}
\end{figure}

When solving the \emph{optimization problem} $P(\cU^\prime)$ we do not only 
determine one  optimal solution, but a representative set $X^{\prime\ast}$ of 
extreme supported efficient solutions.
In the subsequent \emph{pessimization step} we consider \emph{all} solutions
$x \in X^{\prime\ast}$. For each of them we determine not just one worst-case scenario,
but a worst-case scenario for each of the $p$ objective functions independently.
All of these $p \cdot |X^{\prime\ast}|$
worst-case scenarios are then added to the uncertainty set.
\medskip

Algorithm~\ref{algo:OptPess-multiobjective} describes the exact procedure
and the following lemma shows its correctness.

\begin{algorithm} 
\begin{algorithmic}[hbt!]
\small
	\REQUIRE{Multi-objective robust optimization problem $P(\cU)$ as in \eqref{eq:Grundproblem-p-kriteriell}.}	\REQUIRE{Finite initial set $\cU^{(0)} \subseteq \cU$.}
	\ENSURE{Either $\cU$ finite or $\cU$ a polytope and $f_i(x,\cdot)$, $i=1,2,\dots,p$ continuous and quasi-convex.}
	\ENSURE{\eqref{eq:domination-property}, \eqref{eq:ideal-point-existent} hold for $P(\cU)$ and for $P(\cU^\prime)$ for any finite subset $\cU^\prime \subseteq \cU$.}
	\STATE{Set $k \coloneqq 0$.}
	\REPEAT
		\STATE{Set $\cU^{(k+1)} \coloneqq \cU^{(k)}$.}

    	\tikzmark{startopt44}
    	\STATE{Determine 
    	representative set for extreme supported efficient solutions $X^{(k)\ast}$ and
    	representative set for extreme supported nondominated points $Y^{(k)\ast}$ of $P(\cU^{(k)})$.
    	}
    	
    	\hfill~\tikzmark{endopt44}
    	
    	\tikzmark{startpess44}
    	\FORALL{$x^\ast \in X^{(k)\ast}$}
    	    \FORALL{$i=1,2,\dots,p$} 
    			\STATE{Determine $\xi^{\ast} \in \argmax_\cU f_i(x^\ast,\xi)$.}
    			\STATE{Add $\xi^{\ast}$ to $\cU^{(k+1)}$.}
    		\ENDFOR
    	\ENDFOR
    	
    	\hfill~\tikzmark{endpess44}
 	
		\STATE{$k \coloneqq k+1$}
	\UNTIL{$f^\cU(x^\ast)=f^{\cU^{(k-1)}}(x^\ast)$ for all $x^\ast \in X^{(k-1)\ast}$.}
	\RETURN{$X^{(k-1)\ast}$: representative set of extreme supported efficient solutions of $P(\cU)$.}
	\RETURN{$Y^{(k-1)\ast}$: set of extreme supported nondominated points of $P(\cU)$.}
	\RETURN{$\cU^\textup{FINAL}\coloneqq \cU^k$: set of worst-case scenarios.}
\end{algorithmic}
\caption{Optimization-pessimization for multi-objective robust optimization} \label{algo:OptPess-multiobjective}
\Textboxblue{startopt44}{endopt44}{Optimization}
\Textboxblue{startpess44}{endpess44}{Pessimization}
\end{algorithm}


\begin{lemma} \label{lem:OptPess-multiobjective}
Let \eqref{eq:domination-property}, \eqref{eq:ideal-point-existent} hold for $P(\cU)$ and for $P(\cU^\prime)$ for any finite subset $\cU^\prime \subseteq \cU$.
\begin{enumerate}[label=(\roman*)]
\item Let $\cU$ be finite. Then Algorithm~\ref{algo:OptPess-multiobjective} returns a representative set of extreme supported efficient solutions to $P(\cU)$ in at most $|\cU|$ iterations.
\item Let $\cU$ be a polytope or finite and $f_i(x,\cdot) \colon \conv(\cU) \to \R$, $i=1,2,\dots,p$, be continuous and quasi-convex.
Then Algorithm~\ref{algo:OptPess-multiobjective} returns a representative set of extreme supported efficient solutions to~\eqref{eq:Grundproblem-p-kriteriell} 
in at most $k$ iterations where $k$ is the number of extreme points of $\cU$,
if we choose an algorithm for the pessimization problem which always finds an extreme point of $\cU$.
\end{enumerate}
\end{lemma}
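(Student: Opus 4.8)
The plan is to follow the template of Lemma~\ref{lem:OptPessSingleObjective}: establish correctness of whatever Algorithm~\ref{algo:OptPess-multiobjective} returns once it halts, and then bound the number of iterations separately for (i) and (ii). For correctness, first note that the algorithm produces a nested chain $\cU^{(0)} \subseteq \cU^{(1)} \subseteq \dots \subseteq \cU$ of \emph{finite} sets: $\cU^{(0)}$ is finite by hypothesis, and each iteration appends only the at most $p\,|X^{(k)\ast}|$ worst-case scenarios attached to the \emph{finite} representative set $X^{(k)\ast}$ of extreme supported efficient solutions (for $p=2$ this finiteness is exactly what dichotomic search guarantees, by Lemmas~\ref{lem:Dichotomic} and~\ref{lem:DichotomicSearch-BRO-finiteU} applied to the bottleneck reformulation of $P(\cU^{(k)})$). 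Hence every subproblem $P(\cU^{(k)})$ is again one for which \eqref{eq:domination-property} and \eqref{eq:ideal-point-existent} hold by assumption, so the optimization step is always well defined.

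Now suppose the algorithm stops after iteration $k$. Its stopping criterion states precisely that every $x^\ast$ in the finite representative set $X^{(k-1)\ast}$ of extreme supported efficient solutions of $P(\cU^{(k-1)})$ satisfies the worst-case property~\eqref{eq:WCincluded} with $\cU^\prime=\cU^{(k-1)}$. Since $\cY = f^{\cU^{(k-1)}}(\cX)$ is compact (e.g.\ because $\cX$ is compact and $f^{\cU^{(k-1)}}$, a maximum of finitely many continuous functions, is continuous) and \eqref{eq:domination-property} holds for both $P(\cU^{(k-1)})$ and $P(\cU)$, Theorem~\ref{thm:Konvergenzaussage_verstaerkt-esn}(iii) applies with $\Rep^\prime_\text{ESE}=X^{(k-1)\ast}$ and yields that $X^{(k-1)\ast}$ is a representative set of extreme supported efficient solutions of $P(\cU)$; correspondingly $Y^{(k-1)\ast}=f^{\cU^{(k-1)}}(X^{(k-1)\ast})=f^\cU(X^{(k-1)\ast})$ is the set of its extreme supported nondominated points. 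Thus the output is correct whenever the algorithm terminates.

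It remains to bound the iteration count. For (i), let $\cU$ be finite. In any non-terminal iteration the stopping criterion fails, so there exist $x^\ast \in X^{(k)\ast}$ and an index $i$ with $f_i^{\cU^{(k)}}(x^\ast) < f_i^{\cU}(x^\ast)$; the scenario $\xi^\ast \in \argmax_{\xi\in\cU}f_i(x^\ast,\xi)$ found in the pessimization step then satisfies $f_i(x^\ast,\xi^\ast) = f_i^{\cU}(x^\ast) > f_i^{\cU^{(k)}}(x^\ast) \ge f_i(x^\ast,\xi)$ for all $\xi\in\cU^{(k)}$, so $\xi^\ast\notin\cU^{(k)}$ and $\cU^{(k+1)}\supsetneq\cU^{(k)}$. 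Since $\cU^{(k)}\subseteq\cU$, this strict growth can occur at most $|\cU|-|\cU^{(0)}|$ times, giving at most $|\cU|$ iterations in total. For (ii), when $\cU$ is a polytope (the finite case being already covered) and $f_i(x,\cdot)$ is continuous and quasi-convex, the maximum of $f_i(x^\ast,\cdot)$ over the compact set $\cU$ is attained at an extreme point of $\cU$, just as in Lemma~\ref{lem:OptPessSingleObjective}(ii); since the pessimization oracle is chosen to return extreme points, the same computation as above shows that every non-terminal iteration adds a \emph{new} extreme point of $\cU$ to $\cU^{(k)}$. Once $\cU^{(k)}$ contains all extreme points of $\cU$, each $f_i^{\cU}(x^\ast)$ is attained on $\cU^{(k)}$ and the stopping criterion holds, so the algorithm terminates after at most $k$ iterations, $k$ being the number of extreme points of $\cU$.

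The routine parts are the two counting arguments, which are direct adaptations of Lemma~\ref{lem:OptPessSingleObjective}. The crux is rather the correctness step: one has to make sure the optimization subroutine genuinely outputs a \emph{finite} representative set of \emph{extreme supported} efficient solutions, so that the stronger Theorem~\ref{thm:Konvergenzaussage_verstaerkt-esn} (and not merely Theorem~\ref{thm:Konvergenzaussage_verstaerkt}) is applicable, the termination test is itself finite, and $\cY$ is compact as that theorem requires. In the biobjective linear and mixed-integer linear setting all of this is supplied by the bottleneck reformulation together with the correctness of dichotomic search (Lemmas~\ref{lem:BRO-BROBN}, \ref{lem:Dichotomic}, \ref{lem:DichotomicSearch-BRO-finiteU} and~\ref{lem:DichotomicSearch-BRO-polytopeU}), and keeping each $\cU^{(k)}$ finite so that these results keep applying is immediate from the construction.
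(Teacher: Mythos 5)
Your proof is correct and follows essentially the same route as the paper's: read the stopping criterion as the worst-case property \eqref{eq:WCincluded} for the representative set $X^{(k-1)\ast}$, verify compactness of $\cY$ and the domination property so that Theorem~\ref{thm:Konvergenzaussage_verstaerkt-esn} yields correctness of the output, and then bound the iterations by the same new-scenario and new-extreme-point counting arguments as in Lemma~\ref{lem:OptPessSingleObjective}. Your added remarks on finiteness of each $\cU^{(k)}$ and of $X^{(k)\ast}$ are just a more explicit version of what the paper leaves implicit.
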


\begin{proof}
Algorithm~\ref{algo:OptPess-multiobjective} determines a representative set of extreme supported efficient solutions to $\cU^{(k-1)}$ in step $k$. 
It stops if
\begin{equation} \label{eq:helper3}
f^\cU(x^\ast)=f^{\cU^{(k-1)}}(x^\ast)
\end{equation}
for all $x^\ast \in X^{(k-1)\ast}$.

Hence, $\Rep_\textup{ESN}=X^{(k-1)\ast}$ is a representative set of extreme supported efficient solutions to $P(\cU^\prime)$ for $\cU^{(k-1)}$
whose elements satisfy 
\eqref{eq:WCincluded}.
Furthermore, $\cY=f^\cU(x)$ is compact, since it is the image of a compact set under the function $\max_{\xi \in \cU^\prime} f(x,\xi)$ that is continuous since $\cU^\prime$ is finite.
We can thus apply Theorem~\ref{thm:Konvergenzaussage_verstaerkt-esn} for $\cU^\prime=\cU^{(k-1)} \subseteq \cU$ and,
after termination, $X^{(k-1)\ast}$ is a representative set of extreme supported efficient solutions to $P(\cU)$.

We now show the bounds on the number of iterations.
\begin{enumerate}[label={ad (\roman*)}]
\item In every iteration, either at least one new worst-case scenario is added or \eqref{eq:helper3} holds and the procedure stops.
Since $\cU$ is finite, the latter happens after at most $|\cU|$ iterations.


\item Consider the pessimization problem $\textup{Pess}(x^k)$:
here we maximize a continuous function over a compact set $\cU$,
i.e., a maximum always exists.
Since $f(x,\cdot)$ is quasi-convex, the maximum is always attained at an extreme point of $\cU$. If we choose an algorithm that returns an extreme point for such optimization problems, we add a new extreme point in each iteration until \eqref{helper3} holds as in part (i).
\end{enumerate}
\end{proof}

Algorithm~\ref{algo:OptPess-multiobjective} provides a method to solve problem \eqref{eq:GrundproblemBRO} under the stated assumptions.
However, this is still challenging since in each iteration a representative set for all extreme supported efficient solutions to $P(\cU^\prime)$ for some $\cU^\prime \subset \cU$ needs to be found.
In Section~\ref{sec:MOA} we employ dichotomic search for this purpose.

\section{Algorithms for robust biobjective optimization} \label{sec:Algos}

In Sections~\ref{sec:DichotomicSearch-minmax} and~\ref{sec:OptPess-multiobjective} algorithms known from (deterministic) biobjective and (single-objective) robust optimization, respectively, have been generalized.
However, 
in each iteration of the proposed dichotomic search method (Algorithm~\ref{algo:DichotomicSearch}, Lemma~\ref{lem:DichotomicSearch-BRO-polytopeU}) a robust problem has to be solved and,
similarly,
in each iteration of the proposed optimization-pessimization method (Algorithm~\ref{algo:OptPess-multiobjective}, Lemma~\ref{lem:OptPess-multiobjective})
a multiobjective problem has to be solved.
So far, we treated these steps as if they were performed by an oracle.

In this section we put these steps into concrete terms and,  in doing so, present algorithms designed to solve uncertain biobjective problems, more specifically the problem \eqref{eq:GrundproblemBRO} as defined in Section~\ref{sec:preliminaries}.
Throughout this section we always assume that the assumptions of \eqref{eq:GrundproblemBRO}, 
i.e., (BRO-1), (BRO-2), and (BRO-3) (see page \pageref{eq:GrundproblemBRO}), hold.

Specifically, three different approaches to find minmax robust solutions for $P(\cU)$ are presented:
\begin{itemize}
\item A robust optimizer's approach (ROA): 
We view the problem \eqref{eq:GrundproblemBRO} primarily as a \emph{robust} optimization problem 
-- just with the added difficulty that it has two objective functions -- 
and, consequently, apply a method from robust optimization, namely
the generalized optimization-pessimization method (Algorithm~\ref{algo:OptPess-multiobjective}), to the problem $\textup{BRO}(\cU)$. 
The subproblem to be solved in each iteration is a
\emph{biobjective} problem $\textup{BRO}(\cU^\prime)$ with a small uncertainty
set $\cU^\prime \subseteq \cU$ which we tackle by the generalized version of
dichotomic search (Algorithm~\ref{algo:DichotomicSearch}). 
This algorithm is presented in Section~\ref{sec:ROA}.
\item A multiobjective optimizer's approach (MOA): 
We view the problem \eqref{eq:GrundproblemBRO} primarily as a \emph{biobjective} optimization problem 
-- with the added difficulty that we aim to find a \emph{robust} solution and the objective functions, thus, contain a maximum -- 
and, consequently, apply a method from biobjective optimization, namely
the generalized version of dichotomic search (Algorithm~\ref{algo:DichotomicSearch})
to the problem $\textup{BRO}(\cU)$. 
The subproblem to be solved in each iteration is a single-objective
but \emph{uncertain} problem $P(\cU, \lambda)$ which we tackle by
the optimization-pessimization method (Algorithm~\ref{algo:OptPess-single-objective}).
This algorithm is presented in Section~\ref{sec:MOA}.
\item A multiobjective optimizer's approach for bilinear problems using dualization (DA): 
As in the aforementioned approach, 
we take the multiobjective optimizer's perspective and
apply the generalized version of dichotomic search (Algorithm \ref{algo:DichotomicSearch}) to the problem $\textup{BRO}(\cU)$. 
The subproblem $P(\cU, \lambda)$ is directly solved through a reformulation in each iteration.
This algorithm is presented in Section~\ref{sec:dualize}.
\end{itemize}

Algorithms~\ref{algo:ROA}, \ref{algo:MOA}, and~\ref{algo:dichotomic_with_dual} each determine all extreme supported nondominated points and a corresponding representative set of extreme supported efficient solution for $\textup{BRO}(\cU)$.
The following lemma shows that these sets can be used to determine \emph{all} nondominated points and a representative set for all efficient solutions of $\textup{BRO}(\cU)$.

\begin{lemma} \label{lem:Construct_Paretofront_from_ESN-Points}
Let $\textup{BRO}(\cU)$ be given and let $\cX$ be a polytope.
Further, let $\cY_\textup{ESN}$, $|\cY_\textup{ESN}|<\infty$, be its set of nondominated extreme supported points and $\cX_\textup{ESE}$ a representative set of extreme supported efficient solutions.
Let $\cX_\textup{ESE}=\left\lbrace x^{(1)}, x^{(2)}, \dots, x^{(n)} \right\rbrace $,
$\cY_\textup{ESN}=\left\lbrace y^{(1)}, y^{(2)}, \dots, y^{(n)} \right\rbrace$,
$y^{(1)}_1 < y^{(2)}_1 < \dots < y^{(n)}_1$ and
$f(x^{(i)})=y^{(i)}$ for $i=1,2,\dots,n$ .
Then
\begin{align*}
\cX^\ast &\coloneqq \bigcup_{i=1,2,\dots,n-1} \left\lbrace \lambda x^{(i)} + (1-\lambda) x^{(i+1)} \colon \lambda \in (0,1) \right\rbrace
\intertext{is a representative set (of efficient solutions) and}
\cY^\ast &\coloneqq \bigcup_{i=1,2,\dots,n-1} \left\lbrace \lambda y^{(i)} + (1-\lambda) y^{(i+1)} \colon \lambda \in (0,1) \right\rbrace
\end{align*}
is the set of nondominated points of $\textup{BRO}(\cU)$.
\end{lemma}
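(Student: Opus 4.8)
The plan is to exploit that \eqref{eq:GrundproblemBRO} with $\cX$ a polytope is, in effect, a \emph{convex} biobjective problem: each objective $f_i^\cU(x)=\sup_{\xi\in\cU}f_i(x,\xi)$ is a supremum of functions affine in $x$, hence convex, and it is continuous (by (BRO-3) and compactness of $\cU$). Since $\cX$ is compact, $\cY=f^\cU(\cX)$ is compact and the upper image $\cP\coloneqq\cY+\R^2_\geq$ is closed and convex, so $\cY_\textup{N}$ (its set of $\leq$-minimal points) is a lower-left convex curve. Everything will follow once we show that this curve is the polygonal chain $C\coloneqq\bigcup_{i=1}^{n-1}\{\lambda y^{(i)}+(1-\lambda)y^{(i+1)}\colon\lambda\in[0,1]\}$ and that $x\mapsto f^\cU(x)$ carries $\cX^\ast$ onto it. (With the parametrisation $\lambda\in(0,1)$ as literally written, $\cY^\ast=\cY_\textup{N}\setminus\cY_\textup{ESN}$, so one really gets $\cY_\textup{N}=\cY^\ast\cup\cY_\textup{ESN}$ and that $\cX^\ast\cup\cX_\textup{ESE}$ is a representative set; reading the segments as closed yields the statement verbatim. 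The degenerate case $n\le1$ is immediate.)

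\emph{Step 1 (the ESN points lie on a convex chain).} Since $\cY_\textup{ESN}\subseteq\cY_\textup{N}$, the $y^{(i)}$ are pairwise nondominated — if $y^{(i)}\preceq y^{(j)}$ the trivial combination $y^{(i)}\le y^{(j)}$ contradicts $y^{(j)}\in\cY_\textup{ESN}$ — so $y^{(1)}_1<\dots<y^{(n)}_1$ forces $y^{(1)}_2>\dots>y^{(n)}_2$. No $y^{(k)}$ lies on the \emph{open} segment between two of the others (again by extreme supportedness), so no three of them are collinear. It remains to exclude ``reflex'' corners: if $y^{(i+1)}$ lay strictly above the segment $[y^{(i)},y^{(i+2)}]$, let $q=\mu y^{(i)}+(1-\mu)y^{(i+2)}$ be the point of that segment with $q_1=y^{(i+1)}_1$, so $q_2<y^{(i+1)}_2$; then, using convexity of $f^\cU$ and $f^\cU(x^{(j)})=y^{(j)}$,
\[ f^\cU(\mu x^{(i)}+(1-\mu)x^{(i+2)}) \ \le\ \mu y^{(i)}+(1-\mu)y^{(i+2)} \ =\ q, \]
whose second coordinate is $\le q_2<y^{(i+1)}_2$, so this feasible image strictly dominates $y^{(i+1)}$, contradicting $y^{(i+1)}\in\cY_\textup{N}$. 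Hence the edge slopes strictly increase along the chain, i.e. $\conv(\cY_\textup{ESN})$ is a convex polygon whose set of $\leq$-minimal points is exactly $C$, and consequently $\conv(\cY_\textup{ESN})+\R^2_\geq=C+\R^2_\geq$.

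\emph{Step 2 ($\cY_\textup{N}=C$) and Step 3 ($\cX^\ast$ is representative).} First, $C\subseteq\cY+\R^2_\geq$: for $p=\lambda y^{(i)}+(1-\lambda)y^{(i+1)}$ convexity of $f^\cU$ gives $f^\cU(\lambda x^{(i)}+(1-\lambda)x^{(i+1)})\le p$. Second, by Lemma~\ref{lem:WeaklyDomByNontrivConvCombOfESN} (applicable since $\cY$ is compact and $\cY_\textup{ESN}$ is finite and nonempty) $\cY\subseteq\conv(\cY_\textup{ESN})+\R^2_\geq=C+\R^2_\geq$; note also that $C$ is an antichain (it is the minimal set of $C+\R^2_\geq$). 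Now if $y\in\cY_\textup{N}$, choose $c\in C$ with $c\le y$ (possible as $\cY\subseteq C+\R^2_\geq$) and then $x'$ with $f^\cU(x')\le c$ (possible as $C\subseteq\cY+\R^2_\geq$); minimality of $y$ forces $f^\cU(x')=y$, hence $c=y\in C$. Conversely, let $p\in C$ and pick $x'$ with $f^\cU(x')\le p$. For any $q\in\cY$ with $q\le p$ we have $q\in C+\R^2_\geq$, so some $c'\in C$ satisfies $c'\le q\le p$, and since $C$ is an antichain $c'=p$, whence $q=p$. Applying this with $q=f^\cU(x')$ gives $f^\cU(x')=p\in\cY$, and it also shows that $p$ is nondominated; thus $p\in\cY_\textup{N}$. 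Hence $\cY_\textup{N}=C$. Finally, for $\lambda\in(0,1)$ put $x^\ast\coloneqq\lambda x^{(i)}+(1-\lambda)x^{(i+1)}\in\cX$ and $p\coloneqq\lambda y^{(i)}+(1-\lambda)y^{(i+1)}\in C=\cY_\textup{N}$; convexity gives $f^\cU(x^\ast)\le p$, and since $p$ is nondominated $f^\cU(x^\ast)=p$, so $x^\ast$ is efficient and $f^\cU(\cX^\ast)=\cY^\ast$. Together with $\cY_\textup{N}=C$ this yields both assertions (verbatim for closed segments; up to adjoining $\cY_\textup{ESN}$, resp.\ $\cX_\textup{ESE}$, otherwise).

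\textbf{Main obstacle.} The only nontrivial part is Step~1 — showing that the extreme supported nondominated points are in convex position and that $\cY_\textup{N}$ is precisely the chain joining them. This is exactly where the convexity of $f^\cU$, i.e.\ the minmax-of-linear structure of \eqref{eq:GrundproblemBRO}, is indispensable: for a general biobjective problem the ESN points need not lie on a convex chain and $\cY_\textup{N}$ need not be their connecting polygon, so the lemma would fail.
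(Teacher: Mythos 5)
Your proof is correct and follows essentially the same route as the paper's: the convexity/subadditivity estimate $f^\cU(\lambda x^{(i)}+(1-\lambda)x^{(i+1)})\leq \lambda y^{(i)}+(1-\lambda)y^{(i+1)}$ combined with Lemma~\ref{lem:WeaklyDomByNontrivConvCombOfESN} to rule out any dominating image in $\cY$. The only differences are that you prove explicitly (via the reflex-corner argument in your Step~1) that the consecutive segments form the lower-left facets of $\conv(\cY_\textup{ESN})$ --- a fact the paper's proof simply asserts --- and that you flag the open-segment technicality ($\cY_\textup{N}=\cY^\ast\cup\cY_\textup{ESN}$), which is consistent with the paper's own final inclusion.
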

\begin{proof}

Let $\bar{x} \in \cX^\ast$.
Then $\bar{x}=\lambda x^{(i)} + (1-\lambda) x^{(i+1)}$ for some $i=1,2,\dots,n-1$, 
$\lambda \in (0,1)$, 
and
\begin{align*}
\bar{y}
\coloneqq
f^\cU(\bar{x})
&=
\max_{\xi \in \cU} f(\lambda x^{(i)} + (1-\lambda) x^{(i+1)}, \xi) \\
&=
\max_{\xi \in \cU} \left\lbrace \lambda f(x^{(i)},\xi) + (1-\lambda) f(x^{(i+1)}, \xi) \right\rbrace \\
&\leq
\max_{\xi \in \cU} \lambda f(x^{(i)},\xi) + \max_{\xi \in \cU} (1-\lambda) f(x^{(i+1)}, \xi) \\
&= \lambda f^\cU(x^{(i)}) + (1-\lambda) f^\cU(x^{(i+1)}) \\
&= \lambda y^{(i)} + (1-\lambda) y^{(i+1)}
.
\end{align*}
However, since by 
Lemma~\ref{lem:WeaklyDomByNontrivConvCombOfESN}
we have
$\cY \subseteq \conv(\cY_\textup{ESN}) + \R_\geq^2$
and since $\{\lambda y^{(i)} + (1-\lambda) y^{(i+1)} \}$ is a facet
of $\conv(\cY_{\textup{ESN}})$,
there is no $y \in \cY$ with $y \preceq \lambda y^{(i)} + (1-\lambda) y^{(i+1)}$.
Thus, we have
$\bar{y} = \lambda y^{(i)} + (1-\lambda) y^{(i+1)}$
and $\bar{y}$ is nondominated.
This shows that the solutions in $\cX^\ast$ are efficient and the points in $\cY^\ast$ are nondominated.

It remains to be shown that all nondominated points are included in $\cY^\ast \cup \cY_{\textup{ESN}}$. This, however, follows directly from the fact that, by Lemma~\ref{lem:WeaklyDomByNontrivConvCombOfESN} $\cY \subseteq \conv(\cY_{\textup{ESN}}) + \R_geq^2$. 
\end{proof}

\subsection{A robust optimizer's approach} \label{sec:ROA}

The robust optimizer's approach is based on the idea of applying the generalization of
optimization-pessimization (Algorithm~\ref{algo:OptPess-multiobjective}). In the $k$-th iteration a representative set of extreme supported efficient solutions to $P(\cU^{(k)})$ has to be determined.
For this purpose in Algorithm~\ref{algo:ROA} we employ dichotomic search for robust biobjective linear mixed-integer optimization problems as shown possible in Section 3.2.

\begin{algorithm} 
\begin{algorithmic}[hbt!]
\small
	\REQUIRE{Biobjective mixed-integer linear robust optimization problem \eqref{eq:GrundproblemBRO}.}
	\REQUIRE{Finite initial set $\cU^{(0)} \subseteq \cU$.}
	\ENSURE{Feasible set $\cX$ is a polyhedron intersected with $\R^{n-k} \times \Z^k$ for some $k \in \{0,\ldots,n\}$.}
	\ENSURE{$\cU$ finite or $\cU$ a polytope and $f_i(x,\cdot)$, $i=1,2,\dots,p$ continuous and quasi-convex.}
	\ENSURE{\eqref{eq:domination-property}, \eqref{eq:ideal-point-existent} hold for $P(\cU)$ and for $P(\cU^\prime)$ for any finite subset $\cU^\prime \subseteq \cU$.}
	\STATE{Set $k \coloneqq 0$.}
	\REPEAT
		\STATE{Set $\cU^{(k+1)} \coloneqq \cU^{(k)}$.}
		
		\tikzmark{startopt51}
    	\STATE{Call dichotomic search (Algorithm \ref{algo:DichotomicSearch}) for $\textup{BRO}(\cU^k)$ to determine 
    	representative set for extreme supported efficient solutions $X^{(k)\ast}$ and
    	representative set for extreme supported nondominated points $Y^{(k)\ast}$.
    	}
    	
    	\hfill~\tikzmark{endopt51}
    	
    	
    	\tikzmark{startpess51}
    	\FORALL{$x^\ast \in X^{\ast}$}
			\FORALL{$i=1,2$} 
				\STATE{Determine one $\xi^{\ast} \in \argmax_\cU f_i(x^\ast,\xi)$.}
				\STATE{Add $\xi^{\ast}$ to $\cU^{(k+1)}$.}
			\ENDFOR
		\ENDFOR
		
		\hfill~\tikzmark{endpess51}
		
		\STATE{$k \coloneqq k+1$}
	\UNTIL{$f^\cU(x^\ast)=f^{\cU^{(k-1)}}(x^\ast)$ for all $x^\ast \in X^{(k-1)\ast}$.}
	\RETURN{$X^{(k-1)\ast}$: representative set of extreme supported efficient solutions of $P(\cU)$.}
	\RETURN{$Y^{(k-1)\ast}$: set of extreme supported nondominated points of $P(\cU)$.}
	\RETURN{$\cU^\textup{FINAL}\coloneqq \cU^k$: set of worst-case scenarios.}
\end{algorithmic}
\caption{Robust optimizer's approach (ROA)} \label{algo:ROA}
\Textboxblue{startopt51}{endopt51}{Optimization}
\Textboxblue{startpess51}{endpess51}{Pessimization}
\end{algorithm}

Note that Algorithm~\ref{algo:ROA} is just Algorithm~\ref{algo:OptPess-multiobjective} with the optimization step performed by dichtomic search (Algorithm~\ref{algo:DichotomicSearch}). 
Consequently, the requirements correspond to those of 
Algorithm~\ref{algo:OptPess-multiobjective} and Algorithm~\ref{algo:DichotomicSearch} 
as formulated in Lemma~\ref{lem:OptPess-multiobjective} and Lemma~\ref{lem:DichotomicSearch-BRO-polytopeU}, respectively.
This is stated in the following lemma.

\begin{lemma} \label{lem:ROA-works}
Let $\textup{BRO}(\cU)$ be given.
\begin{enumerate}[label=(\roman*)]
\item Let $\cU$ be finite. Then Algorithm~\ref{algo:ROA} returns a representative set of extreme supported efficient solutions to \eqref{eq:GrundproblemBRO} in at most $|\cU|$ iterations.
\item Let $\cU$ be a polytope or finite and $f_i(x,\cdot) \colon \conv(\cU) \to \R$, $i=1,2,\dots,p$, be continuous and quasi-convex.
Then Algorithm~\ref{algo:ROA} returns a representative set of extreme supported efficient solutions to~\eqref{eq:GrundproblemBRO}, 
in at most $k$ iterations (where $k$ is the number of extreme points of $\cU$)
if we choose an algorithm for the pessimization problem which always finds an extreme point of $\cU$.
\end{enumerate}
\end{lemma}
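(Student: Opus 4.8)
The plan is to recognize Algorithm~\ref{algo:ROA} as an instance of Algorithm~\ref{algo:OptPess-multiobjective}: the two algorithms are identical line for line except that the optimization step of Algorithm~\ref{algo:OptPess-multiobjective} --- which asks for a representative set $X^{(k)\ast}$ of extreme supported efficient solutions and the set $Y^{(k)\ast}$ of extreme supported nondominated points of $P(\cU^{(k)})$ --- is carried out in Algorithm~\ref{algo:ROA} by invoking dichotomic search (Algorithm~\ref{algo:DichotomicSearch}) on $\textup{BRO}(\cU^{(k)})$; the pessimization step, the update of $\cU^{(k)}$, and the stopping criterion coincide verbatim. Thus the whole argument reduces to verifying (A) that the hypotheses of Lemma~\ref{lem:OptPess-multiobjective} are met for $\textup{BRO}(\cU)$, and (B) that the dichotomic-search call legitimately and correctly produces, in every iteration, the output demanded of the optimization oracle in Algorithm~\ref{algo:OptPess-multiobjective}.

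For (A) I would argue as follows. Under (BRO-1)--(BRO-3) the set $\cX$ is compact, $\cU$ is compact, and $f$ is jointly continuous in $(x,\xi)$; hence, by Corollary~\ref{cor:PropertiesForBRO}, both \eqref{eq:domination-property} and \eqref{eq:ideal-point-existent} hold for $P(\cU)=\textup{BRO}(\cU)$. For any finite nonempty $\cU^\prime \subseteq \cU$ the set $\cX$ is still compact and $f$ is continuous in $x$ for every fixed $\xi\in\cU^\prime$, so case~(iii) of Theorem~\ref{thm:DomProperty-IdealPoint} applies and both properties hold for $P(\cU^\prime)$ as well. This is exactly what Lemma~\ref{lem:OptPess-multiobjective} requires of its input.

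For (B) the key point is that $\cU^{(k)}$ is finite in every iteration: it equals the finite initial set $\cU^{(0)}$ enlarged by at most $2\,|X^{(k^\prime)\ast}|$ scenarios in each of the finitely many previous iterations $k^\prime < k$. Hence, by (BRO-1), (BRO-3) and the finiteness (and nonemptiness, given $\cU^{(0)}\neq\emptyset$) of $\cU^{(k)}$, Lemma~\ref{lem:DichotomicSearch-BRO-finiteU} applies to $\textup{BRO}(\cU^{(k)})$ and guarantees that dichotomic search returns a set $Y^{(k)\ast}$ containing all extreme supported nondominated points of $\textup{BRO}(\cU^{(k)})$ together with a representative set $X^{(k)\ast}$ of its extreme supported efficient solutions --- precisely the output that Algorithm~\ref{algo:OptPess-multiobjective} expects from its optimization step. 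The degenerate case $|Y^{(k)\ast}|=1$, where dichotomic search performs zero iterations, is covered by that lemma too.

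Once (A) and (B) are in place, part~(i) and part~(ii) of the present lemma follow directly from part~(i) and part~(ii) of Lemma~\ref{lem:OptPess-multiobjective}, respectively; for part~(ii) one additionally uses, as in the proof of Lemma~\ref{lem:OptPessSingleObjective}~(ii), that the pessimization subproblems $\max_{\xi\in\cU} f_i(x^\ast,\xi)$ have a quasi-convex and continuous objective over the polytope $\cU$, hence attain their maximum at an extreme point of $\cU$, so that a solver returning extreme points adds a fresh extreme point in every non-terminating iteration, yielding the bound by the number of extreme points of $\cU$. I do not expect a genuine obstacle here: the statement is essentially a composition of Lemma~\ref{lem:OptPess-multiobjective} and Lemma~\ref{lem:DichotomicSearch-BRO-finiteU}, and the only thing needing care is the bookkeeping that $\cU^{(k)}$ stays finite and nonempty so that the dichotomic-search call remains justified throughout.
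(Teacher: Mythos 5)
Your proposal is correct and follows essentially the same route as the paper: it identifies Algorithm~\ref{algo:ROA} as Algorithm~\ref{algo:OptPess-multiobjective} with dichotomic search as the optimization oracle, verifies \eqref{eq:domination-property} and \eqref{eq:ideal-point-existent} via Corollary~\ref{cor:PropertiesForBRO} (and Theorem~\ref{thm:DomProperty-IdealPoint} for finite subsets), and then composes Lemma~\ref{lem:OptPess-multiobjective} with the dichotomic-search correctness lemmas. Your version is, if anything, slightly more careful than the paper's in spelling out that each $\cU^{(k)}$ stays finite so that Lemma~\ref{lem:DichotomicSearch-BRO-finiteU} applies to every subproblem (the paper instead cites Lemma~\ref{lem:DichotomicSearch-BRO-polytopeU}), but this is a matter of bookkeeping rather than a different argument.
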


\begin{proof}
By Corollary~\ref{cor:PropertiesForBRO}, $\textup{BRO}(\cU)$ satisfies \eqref{eq:domination-property} and \eqref{eq:ideal-point-existent}.
Algorithm~\ref{algo:ROA} is the same as Algorithm~\ref{algo:OptPess-multiobjective}, but for $p=2$ and with dichotomic search (Algorithm~\ref{algo:DichotomicSearch}) specified in the optimization step.
Lemma~\ref{lem:DichotomicSearch-BRO-polytopeU} justifies that 
dichotomic search works correctly for \ref{eq:GrundproblemBRO}.
Consequently, we may use dichotomic search in line~4 of Algorithm~\ref{algo:OptPess-multiobjective}.
Under \eqref{eq:domination-property} and \eqref{eq:ideal-point-existent} for $\textup{BRO}(\cU)$ and $\textup{BRO}(\cU^\prime)$ for all finite sets $\cU^\prime \subseteq \cU$
Lemma~\ref{lem:OptPess-multiobjective} gives us correctness of Algorithm~\ref{algo:OptPess-multiobjective} and hence also of Algorithm~\ref{algo:ROA}.
%
%
\end{proof}

Note that if Algorithm 5.1 is stopped before the stopping criterion in line 12 is met, the set $\{ f^\cU(k-1)(x): x\in \cX^{(k-1)\ast} \}$  and  $\{ f^\cU(x): x\in X^{(k-1)\ast} \}$ provide lower and upper bounds with respect to the upper setless order, as we have shown in Lemma~\ref{lem:upper-setless-bounds}.
Using convex combinations of subsequent points in these sets like we did in Lemma~\ref{lem:Construct_Paretofront_from_ESN-Points} for $Y^\ast$, we obtain bounds on the region in which the Pareto frontier $Y^\ast$ will lie.
In this sense, Algorithm~\ref{algo:ROA} can be used as an approximation algorithm for \eqref{eq:GrundproblemBRO}.




\subsection{A multiobjective optimizer's approach} \label{sec:MOA}

The multiobjective optimizer's approach is based on the idea of applying dichotomic search (Algorithm~\ref{algo:DichotomicSearch}) as introduced in Section~\ref{sec:DichotomicSearch-intro} directly to $P(\cU)$.
In each iteration of dichotomic search, we have to solve the scalarized weighted-sum problem
\begin{align} \label{eq:PUlambda}
P(\cU,\lambda)
&&
\min_{x \in \cX} \lambda_1 f_1^\cU(x) + \lambda_2 f_2^\cU(x) + \dots + \lambda_p f_p^\cU(x)
&&
\end{align}
for $p=2$ and given weights $\lambda\in \R_{\succeq 0}$.
In order to do this, we utilize optimization-pessimization for single-objective robust optimization as reviewed in Section~\ref{sec:OptPess-intro}: We solve a sequence of problems $P(\cU^0,\lambda),P(\cU^1,\lambda),\dots,P(\cU^k,\lambda)$ until it is guaranteed that $P(\cU^k,\lambda)$ and $P(\cU,\lambda)$ share a representative set of extreme supported minmax robust efficient solutions.
As in Section~\ref{sec:OptPess} we exploit the fact, that for finite sets $\cU^\prime$ a problem $P(\cU^\prime,\lambda)$ is easier to solve than $P(\cU,\lambda)$ as it can be written as a problem with finitely many constraints.
For solving the scalarization we assumed an oracle in Algorithm~\ref{algo:DichotomicSearch}.
Now we want to be more specific.
We first reformulate problem \eqref{eq:PUlambda} such that we can apply optimization-pessimization
 (see Section~\ref{sec:OptPess-intro}) for its solution. 
 This is done in the next lemma.

\begin{lemma} \label{lem:Hilfslemma}
Let $\lambda \in \R^2_{\succeq 0}$ be fixed. Then $P(\cU,\lambda)$ can be transformed to
\begin{align} \label{eq:PU_bar}
\bar{P}(\cU,\lambda)
&&
\min_{x \in \cX} \sup_{\bar{\xi} \in \bar{\cU}} \bar{f}_\lambda(x,\bar{\xi})
\textup{,}
&&
\end{align}
i.e., a problem of type $\textup{P}^\textup{single}$ as introduced in \eqref{eq:MinSupProblem},
for $\bar{\cU} \coloneqq \bigtimes_{i=1,2,\dots,p} \cU$, 
$\bar{\xi} \coloneqq (\xi_1,\xi_2,\dots,\xi_p)$ and 
$\bar{f}_\lambda(x,\bar{\xi}) \coloneqq \sum_{i=1}^p \lambda_i f_i(x,\xi_i)$.
\end{lemma}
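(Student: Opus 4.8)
The plan is to verify that $P(\cU,\lambda)$ and $\bar P(\cU,\lambda)$ are the same optimization problem over the same feasible set, differing only in how the objective is written, so that one may replace the other. First I would note the structural part: $\bar\cU$ (the $p$-fold Cartesian product of $\cU$) is again a polytope — being a product of polytopes — or a finite set — being a product of finite sets — so it is admissible as an uncertainty set; and $\bar f_\lambda(x,\bar\xi)=\sum_{i=1}^{p}\lambda_i f_i(x,\xi_i)$ is linear in $x$ for every fixed $\bar\xi$, because each $f_i(\cdot,\xi_i)$ is. Hence $\bar P(\cU,\lambda)$ genuinely has the form $\PsingleU{\bar\cU}$ of \eqref{eq:MinSupProblem}. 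The feasible set of both problems is $\cX$, so nothing has to be done there.

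The heart of the argument is a pointwise identity between the two objective functions. Fixing $x\in\cX$, I would compute
\begin{align*}
\sup_{\bar\xi\in\bar\cU}\bar f_\lambda(x,\bar\xi)
&=\sup_{(\xi_1,\dots,\xi_p)\in\cU^p}\sum_{i=1}^{p}\lambda_i f_i(x,\xi_i)
=\sum_{i=1}^{p}\sup_{\xi_i\in\cU}\lambda_i f_i(x,\xi_i)\\
&=\sum_{i=1}^{p}\lambda_i\sup_{\xi\in\cU}f_i(x,\xi)
=\sum_{i=1}^{p}\lambda_i f_i^{\cU}(x),
\end{align*}
which is exactly the objective of $P(\cU,\lambda)$ at $x$. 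The first equality is the definition of $\bar f_\lambda$; the second is the elementary fact that the supremum of a function that is \emph{separable} over a product set — here each summand $\lambda_i f_i(x,\xi_i)$ depends only on its own block $\xi_i$ — decomposes into the sum of the blockwise suprema; the third uses $\lambda_i\ge 0$ (together with $\cU\neq\emptyset$, so that the identity also holds when $\lambda_i=0$, both sides being $0$); the last is the definition of $f_i^{\cU}$. Under the standing assumptions of \eqref{eq:GrundproblemBRO} all these suprema are finite, so there are no issues with $\pm\infty$.

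Having the pointwise identity, I would conclude: $P(\cU,\lambda)$ and $\bar P(\cU,\lambda)$ minimize over the same set $\cX$ a function that agrees at every point of $\cX$, hence they have the same optimal value and \emph{exactly} the same set of optimal solutions; in particular any (representative set of) optimal solutions of one is one for the other, which is the sense in which $P(\cU,\lambda)$ is ``transformed to'' $\bar P(\cU,\lambda)$. I do not expect a genuine obstacle in this lemma; the only points needing attention are the sign condition $\lambda\in\R^2_{\succeq 0}$ and the nonemptiness of $\cU$, both guaranteed by assumption. I would, however, flag one subtlety for later use: quasi-convexity of the maps $f_i(x,\cdot)$ is \emph{not} inherited by $\bar f_\lambda(x,\cdot)$ in general, so this reformulation only certifies membership in the class \eqref{eq:MinSupProblem}; the quasi-convex structure that optimization-pessimization needs has to be recovered afterwards from the observation that the pessimization problem over $\bar\cU$ splits into $p$ independent pessimization problems over $\cU$.
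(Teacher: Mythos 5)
Your proof is correct and follows essentially the same route as the paper: the key step in both is the pointwise identity $\sum_{i=1}^p \lambda_i \sup_{\xi\in\cU} f_i(x,\xi) = \sup_{\bar\xi\in\cU^p}\sum_{i=1}^p \lambda_i f_i(x,\xi_i)$, justified by separability over the product set and $\lambda\succeq 0$. Your additional remarks (that $\bar\cU$ is again a polytope or finite set, that $\bar f_\lambda$ stays linear in $x$, and the caution about quasi-convexity in $\bar\xi$) go slightly beyond what the paper writes but do not change the argument.
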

\begin{proof}
We reformulate \eqref{eq:PUlambda} as follows:
\begin{align*}
\min_{x \in \cX} 
	\left\lbrace 
		\lambda_1 f_1^\cU(x) + \lambda_2 f_2^\cU(x) + \dots + \lambda_p f_p^\cU(x) 
	\right\rbrace
&=
\min_{x \in \cX} 
\left\lbrace \sum_{i=1}^p \lambda_i \sup_{\xi \in \cU} f_i(x,\xi) \right\rbrace\\
&=
\min_{x \in \cX} 
\sup_{(\xi_1,\xi_2,\dots,\xi_p) \in \cU^p} 
\left\lbrace
\sum_{i=1}^p \lambda_i  f_i(x,\xi_i)
\right\rbrace
\\
&=
\min_{x \in \cX} \sup_{\bar{\xi} \in \bar{\cU}} \bar{f}_\lambda(x,\bar{\xi})
\text.
\end{align*}
\end{proof}

Lemma~\ref{lem:Hilfslemma} shows that $P(\cU,\lambda)$ can be solved by solving a single-objective robust optimization problem $\bar{P}(\bar{\cU},\lambda)$, i.e., of type $P^\textup{single}$, as has been introduced in \eqref{eq:MinSupProblem}.

Algorithm~\ref{algo:MOA} describes a \emph{basic version} of the multiobjective
optimizer's approach. 
Its correctness is shown in the following lemma.

\begin{algorithm} 
\begin{algorithmic}[!hbt]
\small
	\REQUIRE{Biobjective mixed-integer linear robust optimization problem \eqref{eq:GrundproblemBRO}.}
	\REQUIRE{Finite initial set $\cU^{(0)} \subseteq \cU$.}
	\ENSURE{Feasible set $\cX$ is a polyhedron intersected with $\R^{n-k} \times \Z^k$ for some $k \in \{0,\ldots,n\}$.}
	\ENSURE{$\cU$ finite or $\cU$ a polytope and $f_i(x,\cdot)$, $i=1,2,\dots,p$ continuous and quasi-convex.}
	\ENSURE{\eqref{eq:domination-property}, \eqref{eq:ideal-point-existent} hold for $P(\cU)$ and for $P(\cU^\prime)$ for any finite subset $\cU^\prime \subseteq \cU$.}

    \STATE{Initialize $\mathcal{L} \coloneqq \emptyset$ }     \COMMENT{$\mathcal{L}$ will contain list of tuple images $(y^l,y^r)$ satisfying $y^l_1 < y^r_1, y^l_2 > y^r_2$}
	
	\tikzmark{MOA-start1}
	\STATE{Call optimization-pessimization (Algorithm \ref{algo:OptPess-single-objective}) 
	on $\min_{x \in \cX} f^\cU_1(x)$
	 with initial set $\cU^{(0)}$ to determine $\epsilon_1$, $\cU^\text{FINAL}$, and $\xi^\textup{WC}$.}
	\STATE{Call optimization-pessimization (Algorithm \ref{algo:OptPess-single-objective}) 
	on $\min_{x \in \cX} \{ f^\cU_2(x) \colon \max_{\xi \in \cU} f_1(x,\xi) \leq \epsilon_1 \}$
	with initial set $\cU^\text{FINAL}$ to determine optimal solution $x^L$.}
    \STATE{Set $y^L \coloneqq f^\cU(x^L) $.}
	\STATE{Call optimization-pessimization (Algorithm \ref{algo:OptPess-single-objective}) 
	on $\min_{x \in \cX} f^\cU_2(x)$
	 with initial set $\cU^{(0)}$ to determine $\epsilon_2$, $\cU^\text{FINAL}$, and $\xi^\textup{WC}$.}
	\STATE{Call optimization-pessimization (Algorithm \ref{algo:OptPess-single-objective}) 
	on $\min_{x \in \cX} \{ f^\cU_1(x) \colon \max_{\xi \in \cU} f_2(x,\xi) \leq \epsilon_2 \}$
	with initial set $\cU^\text{FINAL}$ to determine optimal solution $x^R$.}.
    \STATE{Set $y^R \coloneqq f^\cU(x^R)$.}
    
    \hfill~\tikzmark{MOA-end1}

	\IF{$y^L=y^R$}
		\STATE{STOP. Only one nondominated image found.}
		\RETURN{$Y^\ast = \{y^L\}, X^\ast = \{x^L\}$.}
	\ELSE
		\STATE{$Y^\ast = \{y^L,y^R\}, X^\ast = \{x^L,x^R\}, \mathcal{L}=\{(y^L,y^R)\}$.}
	\ENDIF
	\WHILE{$L \not = \emptyset $}
		\STATE{Remove element $(y^l,y^r)$ from $\mathcal{L}$.}
		\STATE{Compute $\lambda \coloneqq (y^l_2-y^r_2,y^r_1-y^l_1)$.}

		
		
		\tikzmark{MOA-start2b}
		\STATE{Call optimization-pessimization (Algorithm~\ref{algo:OptPess-single-objective})
		on $\min_{x \in \cX} \bar{f_\lambda}(x)$
		with initial set $\cU^{(0)}$ to determine optimal solution $x^\ast$.}
		\STATE{Set $y^\ast \coloneqq \bar{f_\lambda}(x^\ast)$.}
		
		\hfill~\tikzmark{MOA-end2b}

		\IF{$\lambda^T y^\ast \not = \lambda^T y^l$}
			\STATE{Add $y^\ast$ to $Y^\ast$, add $x^\ast$ to $X^\ast$.}
			\STATE{Add $(y^l,y^\ast),(y^\ast,y^r)$ to $\mathcal{L}$}
		\ENDIF
	\ENDWHILE
	\RETURN{$X^{\ast}$: representative set of extreme supported efficient solutions of $P(\cU)$.}
	\RETURN{$Y^{\ast}$: set of extreme supported nondominated points of $P(\cU)$.}
\end{algorithmic}
\caption{Multiobjective optimizer's approach (MOA)} \label{algo:MOA}
\Textboxred{MOA-start1}{MOA-end1}{Determine lexicographic solutions}
\Textboxred{MOA-start2b}{MOA-end2b}{Solve weighted-sum problem $\bar{P}(\cU,\lambda)$}
\end{algorithm}

\begin{lemma} \label{lem:MOA-works}
Let $\textup{BRO}(\cU)$ be given.
Then Algorithm~\ref{algo:MOA} returns a representative set of extreme supported efficient solutions to \eqref{eq:GrundproblemBRO} after a finite number of iterations.
\end{lemma}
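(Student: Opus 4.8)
The plan is to read Algorithm~\ref{algo:MOA} as nothing other than dichotomic search (Algorithm~\ref{algo:DichotomicSearch}) applied to $\textup{BRO}(\cU)$, with each scalarized subproblem delegated to optimization--pessimization for single-objective robust optimization (Algorithm~\ref{algo:OptPess-single-objective}). Correctness and finiteness then follow by composing the results already established, once one checks that the delegated subproblems genuinely are single-objective robust problems that optimization--pessimization can handle and solve in finitely many steps.

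First I would invoke Lemma~\ref{lem:DichotomicSearch-BRO-polytopeU} (or Lemma~\ref{lem:DichotomicSearch-BRO-finiteU} when $\cU$ is finite): it guarantees that dichotomic search applied to $\textup{BRO}(\cU)$ terminates after $2|Y^\ast|-3$ iterations of the while-loop and outputs all extreme supported nondominated points together with a representative set of extreme supported efficient solutions. This uses the standing assumptions (BRO-1)--(BRO-3), whose relevant consequences (the domination and ideal-point properties) are recorded in Corollary~\ref{cor:PropertiesForBRO}. Since Algorithm~\ref{algo:MOA} differs from Algorithm~\ref{algo:DichotomicSearch} only in that the oracle calls---the lexicographic minimizations in lines~2--7 and the weighted-sum minimizations $P(\cU,\lambda)$ in line~18---are spelled out as calls to optimization--pessimization, it then suffices to show that each such call terminates and returns a correct optimal solution and value.

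For the weighted-sum subproblem this is Lemma~\ref{lem:Hilfslemma}: $P(\cU,\lambda)$ is equivalent to the single-objective robust problem $\bar P(\bar\cU,\lambda)$ with $\bar\cU=\cU^p$ and $\bar f_\lambda(x,\bar\xi)=\sum_i\lambda_i f_i(x,\xi_i)$, which is of type $\PsingleU{\cdot}$; here $\bar\cU$ is again finite, respectively a polytope (a finite product of polytopes). One point to address carefully is that $\bar f_\lambda(x,\cdot)$ need not be quasi-convex on $\bar\cU$ even though every $f_i(x,\cdot)$ is, so the hypotheses of Lemma~\ref{lem:OptPessSingleObjective}(ii) are not met verbatim; but the supremum over the product set decouples, $\sup_{\bar\xi\in\bar\cU}\bar f_\lambda(x,\bar\xi)=\sum_i\lambda_i\max_{\xi_i\in\cU}f_i(x,\xi_i)$, so a worst-case scenario for $\bar f_\lambda$ can be assembled coordinatewise from worst-case scenarios for the (quasi-convex) functions $f_i(x,\cdot)$, each attained at an extreme point of $\cU$. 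Choosing this coordinatewise rule as the pessimization oracle adds a fresh extreme point of $\bar\cU$ in each iteration, and the argument of Lemma~\ref{lem:OptPessSingleObjective}(ii) then goes through with $|\ext(\cU)|^p$ in place of $|\ext(\cU)|$ (for finite $\cU$ one appeals instead to part (i)). For the lexicographic steps, $\min_{x\in\cX}f_i^\cU(x)$ is already exactly of type $\PsingleU{\cU}$ with $h=f_i$; the second-stage problems additionally carry the semi-infinite constraint $f_1(x,\xi)\le\epsilon_1$ (respectively $f_2(x,\xi)\le\epsilon_2$) for all $\xi\in\cU$, but since $f_1(x,\cdot)$ is quasi-convex and linear in $x$ and $\cU$ is a polytope, this constraint is equivalent to finitely many linear constraints in $x$ and merely restricts $\cX$ to another polytope intersected with $\Z^k\times\R^{n-k}$; running optimization--pessimization with the pessimization step checking worst cases both for the objective and for feasibility of this constraint (adding any violating scenario), exactly as in the proof of Lemma~\ref{lem:DichotomicSearch-BRO-polytopeU}, again terminates in finitely many iterations without needing the extreme points of $\cU$ explicitly.

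Finally I would assemble the pieces: dichotomic search performs $2|Y^\ast|-3$ while-loop iterations plus a constant number of lexicographic calls, each realized by an optimization--pessimization run that by the above terminates after finitely many inner iterations, so the total iteration count is finite; and by Lemma~\ref{lem:DichotomicSearch-BRO-polytopeU} the overall output is all extreme supported nondominated points together with a representative set of extreme supported efficient solutions of $\textup{BRO}(\cU)=\eqref{eq:GrundproblemBRO}$. The main obstacle I anticipate is precisely the bookkeeping around the subproblems that are not literal instances of the earlier lemmas---the failure of quasi-convexity of $\bar f_\lambda$ on the product uncertainty set, and the semi-infinite constraint appearing in the lexicographic stages---and making rigorous that optimization--pessimization, run with the appropriate coordinatewise and feasibility-aware pessimization oracle, still inherits the finite-termination guarantee of Lemma~\ref{lem:OptPessSingleObjective}.
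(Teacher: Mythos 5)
Your proposal is correct and follows essentially the same route as the paper: read Algorithm~\ref{algo:MOA} as Algorithm~\ref{algo:DichotomicSearch} with the oracle calls instantiated by Algorithm~\ref{algo:OptPess-single-objective}, get overall correctness and finiteness of the dichotomic-search shell from Lemma~\ref{lem:DichotomicSearch-BRO-finiteU}/Lemma~\ref{lem:DichotomicSearch-BRO-polytopeU} together with Corollary~\ref{cor:PropertiesForBRO}, reformulate the weighted-sum subproblem via Lemma~\ref{lem:Hilfslemma}, and terminate each inner run via Lemma~\ref{lem:OptPessSingleObjective}. Your extra care is in fact a small improvement: the paper simply asserts that quasi-convexity of $\bar{f}_\lambda$ is ``inherited'' from $f_1,f_2$, which is not true in general for quasi-convex functions on the product set $\bar{\cU}=\cU^p$, whereas your coordinatewise decoupling of the supremum (worst cases assembled per objective at extreme points of $\cU$, giving at most $|\ext(\cU)|^p$ iterations) closes exactly this gap, and your handling of the robust constraint in the lexicographic stages likewise makes explicit a step the paper leaves implicit.
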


\begin{proof}
Algorithm~\ref{algo:MOA} is dichotomic search (Algorithm~\ref{algo:DichotomicSearch}),
where we specified the algorithm for steps 2-3, 5-6, 17-18,
namely by solving $\textup{BRO}(\cU,\lambda)$ by optimization-pessimization  (Algorithm~\ref{algo:OptPess-single-objective}) in each iteration.
Since $\textup{BRO}(\cU)$ meets the requirements of 
Lemma~\ref{lem:DichotomicSearch-BRO-finiteU} (in case $\cU$ is finite)
or
Lemma~\ref{lem:DichotomicSearch-BRO-polytopeU} (in case $\cU$ is a polytope),
Algorithm~\ref{algo:DichotomicSearch} returns a representative set of extreme supported efficient solutions and a set of extreme supported nondominated solutions after finitely many iterations.

It remains to show that lines 2-3, 5-6 and 17-18 in Algorithm~\ref{algo:MOA}
are correct specifications of the same lines of Algorithm~\ref{algo:DichotomicSearch}.

For lines 2 and 5 this is straightforward as the problems 
\begin{equation} \label{eq:hilfs}
\min_{x\in\cX} f_i^\cU(x),
\end{equation}
$i=1,2$, are single-objective robust optimization problems.
Since $\cU$ is a polytope or finite and $f_i(x,\cdot) \colon \cU \to \R$, $i=1,2$, are continuous and quasi-convex, Lemma~\ref{lem:OptPessSingleObjective} can be applied and optimization-pessimization  (Algorithm~\ref{algo:OptPess-single-objective})
solves \eqref{eq:hilfs}.

The problems in lines 3 and 6 are also of type \eqref{eq:hilfs} only with one additional constraint, i.e., with feasible set is
\[
\cX^\prime_j \coloneqq \{x \in \cX: \max_{\xi \in \cU} f_j(x,\xi) \leq \epsilon_j \}, j=2,1.
\]


In lines~17-18 of Algorithm~\ref{algo:DichotomicSearch} the problem $P(\cU,\lambda)$
is to be solved for some $\lambda \in \R_{\succeq 0}$. 
By Lemma~\ref{lem:Hilfslemma} this can be done by solving $\bar{P}(\cU,\lambda)$ instead which is done in lines 17-18 of Algorithm~\ref{algo:MOA}. 
Since continuity and quasi-convexity of $\bar{f}$ are inherited from continuity and quasi-convexity of $f_1$ and $f_2$, Lemma~\ref{lem:OptPess-multiobjective} can be applied and optimization-pessimization returns a robust solution to $P(\cU,\lambda)$.
\end{proof}

\paragraph*{Warm start modifications.}

In the basic version of Algorithm \ref{algo:MOA} the cutting plane method is initialized with $\cU^\text{(0)}$ in lines 2,5 and 17. 
A possible modification of Algorithm \ref{algo:MOA} is to start the cutting plane method with a larger set $\cU^\prime$ that includes some additional scenarios that have been generated in previous iterations but that is still guaranteed to be finite.
This way, previously generated cutting planes are not forgotten.
Specifically, we propose two modifications:
\begin{itemize}
\item Variant 1 (MOA-ws1): We initialize optimization-pessimization with all previously generated scenarios. To this end, we modify lines~5 and~17 such that the cutting plane method is initialized with $\cU^\text{FINAL}$. This way, $\cU^\text{FINAL}$ grows monotonically.
\item Variant 2  (MOA-ws2): We initialize the cutting plane method with those scenarios that turned out to be worst-case scenarios for a previously found solution optimal $x$. After lines 2-3, 5-6, and 17-18 the worst-case scenarios $\xi^{\text{WC}}$ for $x^L$, $x^R$, and $x^\ast$, respectively, are added to $\cU^\text{(0)}$ and the set grows monotonically, but is much smaller than the set in Variant 1.
\end{itemize}

As Lemmas~\ref{lem:Hilfslemma} and~\ref{lem:MOA-works} above only assume finiteness of the initial uncertainty set their validity is not affected by these modifications.

\subsection{A multiobjective optimizer's approach for bilinear problems} \label{sec:dualize}

In this section, we confine ourselves to a special class of problems:
biobjective mixed-integer linear robust optimization
problems \eqref{eq:GrundproblemBRO} which satisfy not only (BRO-1), (BRO-2), and (BRO-3) as before,
but also the following additional properties:
\begin{itemize}
\item the uncertainty set $\cU$ is as a polytope
$\cU = \{\xi \in \R^m \colon C \xi \leq d \}$
for a matrix $C \in \R^{m^\prime \times m}$ 
and a vector $d \in \R^{m^\prime}$,
and
\item the functions $f_1,f_2 \colon \cX \times \cU \to \R$ are not only linear in $x$ for every fixed $\xi \in \cU$ as required in (BRO-3),
but also linear in $\xi$ for each $x$,
i.e., they are \emph{bilinear} functions.
\end{itemize}

The following lemma shows that under these assumptions a biobjective mixed-integer linear \emph{minmax} optimization problem can be reformulated as a biobjective mixed-integer linear \emph{minimization} problem.

\begin{lemma}\label{lem:Duality2}
We consider the uncertain problem
\begin{align*} \tag{\ref{eq:Grundproblem-p-kriteriell} revisited}
P(\cU)
&&
\min_{x \in \cX}  f^\cU(x)
\textup{.}
&&
\end{align*}
Let the uncertainty set be a non-empty polytope 
$\cU = \{\xi \in \R^m \colon C \xi \leq d \}$,
with $C \in \R^{m^\prime \times m}$, $d \in \R^{m^\prime}$,
and
let the functions $f_i(x,\xi)$, $i=1,2,\dots,p$, be linear in $\xi$ for each $x$, i.e.,
  \[ f_i(x,\xi) \coloneqq \left[ \hat{c_i}(x) \right]^t \xi \]
for functions $\hat{c_i} \colon \cX \to \R^m$,
$i=1,2,\dots,p$.

Let $\lambda \in \R_\succeq^p$.
Then a solution $x^\ast \in \cX$ is optimal for the scalarized problem
\begin{align*} \tag{\ref{eq:PUlambda} revisited}
P(\cU,\lambda)
&&
\min_{x \in \cX} \lambda^t f^\cU(x)
&&
\end{align*}
if and only if there exist $\pi^{(1)\ast}, \dots, \pi^{(p)\ast}  \in \R^{m^\prime}$ such that
$(x^\ast,\pi^{(1)\ast},\dots,\pi^{(p)\ast})$ is optimal for 
\begin{align*}
D(\cU,\lambda) 
&&
\min_{x \in \cX, \pi^{(1)}, \dots, \pi^{(p)}  \in \R^{m^\prime}} \left\lbrace d^t \sum_{i=1}^p \lambda_i \pi^{(i)} \colon C^t \pi^{(i)}= \hat{c_i}(x), \pi^{(i)} \geq 0, i=1,2,\dots,p \right\rbrace
&&
\end{align*}
More precisely, let $x$ be fixed and let $(\pi^{(1)\ast},\dots,\pi^{(p)\ast})$ be an optimal solution to
\begin{equation*}
\min_{\pi^{(1)}, \dots, \pi^{(p)}  \in \R^{m^\prime}} \left\lbrace d^t \sum_{i=1}^p \lambda_i \pi^{(i)} \colon C^t \pi^{(i)}= \hat{c_i}(x), \pi^{(i)} \geq 0, i=1,2,\dots,p \right\rbrace
\end{equation*}
with optimal objective function value $z$. Then $z=\lambda^t f^\cU(x)$ and 
for all $i=1,2,\dots,p$ with $\lambda_i > 0$
\begin{align} \label{eq:DualityIdentity-VAR2+}
d^t \pi^{(i)\ast}= \max_{\xi \in \cU} \left[\hat{c_i}(x)\right]^t \xi
\textup{.}
\end{align}
\end{lemma}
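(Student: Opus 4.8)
The plan is to apply linear programming duality separately for each objective index $i$ with $x$ held fixed, and then to assemble the pieces by minimizing first over the dual variables and finally over $x$. So first I would fix $x\in\cX$ and, for each $i=1,\dots,p$, consider the pessimization problem $\max_{\xi\in\cU}[\hat{c_i}(x)]^t\xi$ over the polytope $\cU=\{\xi\in\R^m\colon C\xi\le d\}$. Since $\cU$ is non-empty and bounded, this linear program is feasible with finite optimal value, so strong LP duality applies: its value equals that of the dual $\min\{d^t\pi\colon C^t\pi=\hat{c_i}(x),\ \pi\ge0\}$, the dual optimum is attained, and in particular the set $\{\pi\in\R^{m^\prime}\colon C^t\pi=\hat{c_i}(x),\ \pi\ge0\}$ is non-empty. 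Hence $f_i^\cU(x)=\max_{\xi\in\cU}[\hat{c_i}(x)]^t\xi=\min\{d^t\pi\colon C^t\pi=\hat{c_i}(x),\ \pi\ge0\}$ for every $i$.

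Next I would look at the inner minimization over $(\pi^{(1)},\dots,\pi^{(p)})$ appearing in $D(\cU,\lambda)$, still with $x$ fixed. Its feasible region is the Cartesian product of the $p$ (non-empty) sets just described, and its objective $d^t\sum_{i=1}^p\lambda_i\pi^{(i)}=\sum_{i=1}^p\lambda_i(d^t\pi^{(i)})$ is separable with non-negative coefficients $\lambda_i\ge0$. Therefore the minimum decomposes,
\[
\min_{\pi^{(1)},\dots,\pi^{(p)}}\Bigl\{d^t\sum_{i=1}^p\lambda_i\pi^{(i)}\colon C^t\pi^{(i)}=\hat{c_i}(x),\ \pi^{(i)}\ge0\Bigr\}=\sum_{i=1}^p\lambda_i f_i^\cU(x)=\lambda^t f^\cU(x)\eqqcolon z,
\]
which gives $z=\lambda^t f^\cU(x)$. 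Moreover, for every $i$ with $\lambda_i>0$ any optimal $\pi^{(i)\ast}$ must attain the $i$-th inner minimum, so $d^t\pi^{(i)\ast}=f_i^\cU(x)=\max_{\xi\in\cU}[\hat{c_i}(x)]^t\xi$, which is \eqref{eq:DualityIdentity-VAR2+}; when $\lambda_i=0$ the component $\pi^{(i)}$ is irrelevant to the objective, which is exactly why the identity is only asserted for $\lambda_i>0$.

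Finally I would minimize over $x\in\cX$. Combining the previous display with $z=\lambda^t f^\cU(x)$ yields
\[
\min_{x\in\cX}\lambda^t f^\cU(x)=\min_{x\in\cX,\ \pi^{(1)},\dots,\pi^{(p)}\in\R^{m^\prime}}\Bigl\{d^t\sum_{i=1}^p\lambda_i\pi^{(i)}\colon C^t\pi^{(i)}=\hat{c_i}(x),\ \pi^{(i)}\ge0,\ i=1,\dots,p\Bigr\},
\]
the right-hand side being exactly $D(\cU,\lambda)$; in particular the two problems share the same optimal value. Then $x^\ast$ is optimal for $P(\cU,\lambda)$ if and only if $\lambda^t f^\cU(x^\ast)$ equals this common value, if and only if there exist $\pi^{(1)\ast},\dots,\pi^{(p)\ast}$ (dual vectors optimal for the inner problem at $x^\ast$, which exist by the non-emptiness established above) such that $(x^\ast,\pi^{(1)\ast},\dots,\pi^{(p)\ast})$ is optimal for $D(\cU,\lambda)$; conversely, the $x$-component of any optimizer of $D(\cU,\lambda)$ is optimal for $P(\cU,\lambda)$.

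The main obstacle is not conceptual but consists in making sure that the hypotheses of strong LP duality genuinely hold — feasibility and, above all, boundedness of $\cU$, so that the primal value is finite and the dual feasible region is non-empty — and in treating the case $\lambda_i=0$ correctly in the separability step, where the per-index duality identity \eqref{eq:DualityIdentity-VAR2+} may fail because $\pi^{(i)}$ no longer enters the objective.
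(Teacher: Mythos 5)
Your proposal is correct and follows essentially the same route as the paper's proof: per-index strong LP duality over the compact polytope $\cU$ (so $f_i^\cU(x)=\min\{d^t\pi\colon C^t\pi=\hat{c_i}(x),\,\pi\geq 0\}$), then separability of the inner minimization to get $\lambda^t f^\cU(x)$ equal to the inner dual value for every fixed $x$, and finally minimization over $x\in\cX$. Your explicit handling of the $\lambda_i=0$ case and of why components $\pi^{(i)\ast}$ with $\lambda_i>0$ must themselves be optimal for the $i$-th dual is slightly more careful than the paper's wording, but it is the same argument.
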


\begin{proof}
 First note that $D(\cU,\lambda)$ is equivalent to 
\begin{equation*}
\hspace{1cm} \min_{x \in \cX} \min_{\pi^{(1)}, \dots, \pi^{(p)}  \in \R^{m^\prime}}  \left\lbrace d^t \sum_{i=1}^p \lambda_i \pi^{(i)} \colon C^t \pi^{(i)}= \hat{c_i}(x), \pi^{(i)} \geq 0, i=1,2,\dots,p \right\rbrace
\end{equation*}
which can be interpreted as optimization problem
\begin{align*}
\bar{D}(\cU,\lambda)
&&
\min_{x \in \cX} g_\lambda(x) 
&&
\end{align*}
with 
\begin{equation*}
g_\text{$\lambda$}(x) \coloneqq \min_{\pi^{(1)}, \dots, \pi^{(p)}  \in \R^{m^\prime}}  \left\lbrace d^t \sum_{i=1}^p \lambda_i \pi^{(i)} \colon C^t \pi^{(i)}= \hat{c_i}(x), \pi^{(i)} \geq 0, i=1,2,\dots,p \right\rbrace 
\textup.
\end{equation*}
We now need to show that  the objective function and the feasible set of $P(\cU,\lambda)$ and $\bar{D}(\cU,\lambda)$ coincide.
Specifically, we show
\begin{equation*}
\lambda^t f^\cU(x)=g_\lambda(x)
\end{equation*}
for all $x \in \cX$.

We first note that $\cU$ is a compact set, 
hence for any fixed $x \in \cX$ and  any $i=1,2,\dots,p$ the linear program
\begin{equation*}
\max \left\lbrace \left[ \hat{c_i}(x) \right]^t \xi \colon \xi \in \cU \right\rbrace
\end{equation*}
has an optimal solution.
Using that $\cU = \{\xi \in \R^m \colon C \xi \leq d \}$ we 
hence get from linear programming duality for $i=1,2,\dots,p$
and fixed $x \in \cX$  that
\begin{align} \label{eq:DualityMinIsMax+}
\max \left\lbrace \left[\hat{c_i}(x)\right]^t \xi \colon C \xi \leq d, \xi \in \R^m  \right\rbrace
= \min \left\lbrace d^t \pi^{(i)} \colon C^t \pi^{(i)} = \hat{c_i}(x), \pi^{(i)} \geq 0, \pi^{(i)} \in \R^{m^\prime}  \right\rbrace,
\end{align}
i.e., for any fixed $x \in \cX$ and $i=1,\ldots,p$, 
an optimal solution $\pi^{(i) \ast}$ to the right hand side satisfies
\begin{equation*}
d^t \pi^{(i) \ast}= \max_{\xi \in \cU} \left[\hat{c_i}(x)\right]^t \xi
\end{equation*}
which shows \eqref{eq:DualityIdentity-VAR2+}.
We can now derive
\begin{align*}
\lambda^t f^\cU(x)
&= \sum_{i=1}^{p}
\lambda_i \underbrace{\max_{\xi \in \R^m} \left\lbrace \left[\hat{c_i}(x)\right]^t \xi \colon C \xi \leq d \right\rbrace}_{=f_i^\cU(x)}\\
&\stackrel{\eqref{eq:DualityMinIsMax+}}{=} \sum_{i=1}^{p}
\lambda_i \min_{\pi^{(i)} \in \R^{m^\prime}} \left\lbrace d^t \pi^{(i)} \colon C^t \pi = \hat{c_i}(x), \pi^{(i)} \geq 0 \right\rbrace\\
&= \min_{\pi^{(1)}, \dots, \pi^{(p)}  \in \R^{m^\prime}} \left\lbrace d^t \sum_{i=1}^p \lambda_i \pi^{(i)}  \colon  C^t \pi^{(i)}= \hat{c_i}(x), \pi^{(i)} \geq 0 \right\rbrace \text,
\end{align*}
where the last step puts the single optimization problems together into a bigger (still separable) problem.
Thus, for any fixed $x$ the objective values of $P(\cU,\lambda)$ and $\bar{D}(\cU,\lambda)$ coincide and hence $x$ is optimal to $P(\cU,\lambda)$ if
and only if it is optimal to $\bar{D}(\cU,\lambda)$. 
\end{proof}

As in Section~\ref{sec:MOA}, we apply dichotomic search to $P(\cU)$ and solve $P(\cU,\lambda)$ for different weights $\lambda \in \R^p_\succeq$.
However, unlike in Section~\ref{sec:MOA} we do not solve $P(\cU,\lambda)$ with an iterative approach, but adopt the other approach described by \cite{HertogEtAl2015}: reformulation of $P(\cU,\lambda)$.
More specifically, we weaponize Lemma~\ref{lem:Duality2} and choose to solve 
\begin{align*}
D(\cU,\lambda) 
&&
z^\ast(\cU,\lambda)
\coloneqq
\min_{x \in \cX, \pi^{(1)}, \dots, \pi^{(p)}  \in \R^{m^\prime}} \left\lbrace d^t \sum_{i=1}^p \lambda_i \pi^{(i)} \colon C^t \pi^{(i)}= \hat{c_i}(x), \pi^{(i)} \geq 0, i=1,2,\dots,p \right\rbrace
&&
\end{align*}
instead of $P(\cU,\lambda)$.

This leads to Algorithm \ref{algo:dichotomic_with_dual}.
The following lemma shows correctness.

\begin{algorithm} 
\begin{algorithmic}[hbt!]
\small
	\REQUIRE{Biobjective mixed-integer linear robust optimization problem \eqref{eq:GrundproblemBRO}.}
	\REQUIRE{Finite initial set $\cU^{(0)} \subseteq \cU$.}
	\ENSURE{Feasible set $\cX$ is a polyhedron intersected with $\R^{n-k} \times \Z^k$ for some $k \in \{0,\ldots,n\}$.}
	\ENSURE{$\cU$ a polytope and $f_i(x,\cdot)$, $i=1,2,\dots,p$ continuous and quasi-convex.}
	\ENSURE{\eqref{eq:domination-property}, \eqref{eq:ideal-point-existent} hold for $P(\cU)$ and for $P(\cU^\prime)$ for any finite subset $\cU^\prime \subseteq \cU$.}
	\ENSURE{$f(x,\cdot) \colon \cU \to \R^p$ linear}
	\STATE{Initialize $\mathcal{L} \coloneqq \emptyset$ }     \COMMENT{$\mathcal{L}$ will contain list of tuple images $(y^l,y^r)$ satisfying $y^l_1 < y^r_1, y^l_2 > y^r_2$}
	
	\tikzmark{start-dual-1}
	\STATE{Determine optimal objective value $\epsilon_1$ of $D(\cU,(1,0))$}
	\STATE{Determine $x^L \in \argmin_\cX \{ g_{(0,1)}(x) \colon g_{(1,0)}(x) \leq \epsilon_1 \} $}
	\STATE{Set $y^L \coloneqq (\epsilon_1,g_{(0,1)}(x^L))^t$ } 
	\STATE{Determine optimal objective value $\epsilon_2$ of $D(\cU,(0,1))$}
	\STATE{Determine $x^R \in \argmin_\cX \{ g_{(1,0)}(x) \colon g_{(0,1)}(x) \leq \epsilon_2 \} $}
	\STATE{Set $y^R \coloneqq (g_{(1,0)}(x^L),\epsilon_2)^t$ } 
	
	\hfill~\tikzmark{end-dual-1}
	
	\IF{$y^L=y^R$}
		\STATE{STOP. Only one nondominated image found}
		\RETURN{$Y^\ast = \{y^L\}, X^\ast = \{x^L\}$}
	\ELSE
		\STATE{$Y^\ast = \{y^L,y^R\}, X^\ast = \{x^L,x^R\}, \mathcal{L}=\{(y^L,y^R)\}$}
	\ENDIF
	\WHILE{$L \not = \emptyset $}
		\STATE{Remove element $(y^l,y^r)$ from $\mathcal{L}$}
		\STATE{Compute $\lambda \coloneqq (y^l_2-y^r_2,y^r_1-y^l_1)$.}
		
		\tikzmark{start3}
		\STATE{Find one optimal solution $(x^\ast, \pi^{(1)},\dots,\pi^{(k)})$ for $D(\cU,\lambda) $.}
		\STATE{Set $y_i^{\ast} = d^t \pi^{(i)\ast}$ for $i=1,2$.}
		
		\hfill~\tikzmark{end3}
		
		\IF{$\lambda^T y^\ast \not = \lambda^T y^l$}
			\STATE{Add $y^\ast$ to $Y^\ast$, add $x^\ast$ to $X^\ast$.}
			\STATE{Add $(y^l,y^\ast),(y^\ast,y^r)$ to $\mathcal{L}$}
		\ENDIF
	\ENDWHILE
	\RETURN{$X^{\ast}$: representative set of extreme supported efficient solutions of $P(\cU)$.}
	\RETURN{$Y^{\ast}$: set of extreme supported nondominated points of $P(\cU)$.}
	\Textboxblue{start-dual-1}{end-dual-1}{Determine lexicographic solutions}
	\Textboxblue{start3}{end3}{Solve $D(\cU,\lambda)$}
\end{algorithmic} 
\caption{Multiobjective optimizer's approach with dualization (DA)} \label{algo:dichotomic_with_dual}
\end{algorithm}

\begin{lemma} \label{lem:CorrectnessDualityAlgorithm}
Let $\textup{BRO}(\cU)$ with 
an nonempty polytope explicitly stated as $\cU = \{\xi \in \R^m \colon C \xi \leq d \}$
for a matrix $C \in \R^{m^\prime \times m}$ and a vector $d \in \R^{m^\prime}$ as uncertainty set
and bilinear functions  $f_1, f_2 \colon \cX \times \cU \to \R$ be given.
Then Algorithm~\ref{algo:dichotomic_with_dual} solves~\eqref{eq:GrundproblemBRO}.
\end{lemma}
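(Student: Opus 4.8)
The plan is to recognize Algorithm~\ref{algo:dichotomic_with_dual} as dichotomic search (Algorithm~\ref{algo:DichotomicSearch}) applied to $\textup{BRO}(\cU)$, in which every scalarized weighted-sum subproblem and every lexicographically constrained subproblem is realized concretely via the linear-programming dual reformulation of Lemma~\ref{lem:Duality2}. Accordingly, the proof would proceed in two steps: first, invoke the already established correctness of dichotomic search for $\textup{BRO}(\cU)$; second, verify that the dual-based computations of Algorithm~\ref{algo:dichotomic_with_dual} deliver exactly the quantities that the corresponding lines of Algorithm~\ref{algo:DichotomicSearch} require.

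For the first step, note that bilinearity of $f_1,f_2$ implies in particular that $f_i(x,\cdot)$ is linear, hence continuous and quasi-convex, in $\xi$ for each fixed $x$; together with $\cU$ being a nonempty polytope and $\cX=P\cap(\Z^k\times\R^{n-k})$ this means that (BRO-1)--(BRO-3) hold. Thus Lemma~\ref{lem:DichotomicSearch-BRO-polytopeU} applies: dichotomic search run on $\textup{BRO}(\cU)$, with an oracle for the weighted-sum problems $P(\cU,\lambda)$ and the lexicographic minimizations, terminates after finitely many iterations and returns the set of all extreme supported nondominated points together with a representative set of extreme supported efficient solutions; by Lemma~\ref{lem:Construct_Paretofront_from_ESN-Points} these describe the whole Pareto frontier of $\textup{BRO}(\cU)$. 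It remains only to check that Algorithm~\ref{algo:dichotomic_with_dual} is a faithful implementation of this procedure, i.e., that its only deviations from Algorithm~\ref{algo:DichotomicSearch} --- namely the lines computing $\epsilon_1,\epsilon_2,x^L,x^R$ and the lines computing $x^\ast,y^\ast$ --- produce the right outputs.

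For the second step, the workhorse is the identity $g_\lambda(x)=\lambda^t f^\cU(x)$ for all $x\in\cX$ and $\lambda\in\R_\succeq^p$ from Lemma~\ref{lem:Duality2}, together with its recovery formula \eqref{eq:DualityIdentity-VAR2+}, which states that an optimal dual vector $\pi^{(i)\ast}$ of $D(\cU,\lambda)$ satisfies $d^t\pi^{(i)\ast}=\max_{\xi\in\cU}[\hat{c_i}(x)]^t\xi=f_i^\cU(x)$ whenever $\lambda_i>0$. In the while-loop this settles everything at once: for the pairs stored in the list $\mathcal{L}$ one has $y^l_1<y^r_1$ and $y^l_2>y^r_2$, so the weight $\lambda=(y^l_2-y^r_2,\,y^r_1-y^l_1)$ has $\lambda_1,\lambda_2>0$; hence the values $y_i^\ast=d^t\pi^{(i)\ast}$ assigned in line~18 equal $f_i^\cU(x^\ast)$ for $i=1,2$, so $y^\ast=f^\cU(x^\ast)$, and $\lambda^t y^\ast=z^\ast(\cU,\lambda)=\min_{x\in\cX}\lambda^t f^\cU(x)$ by Lemma~\ref{lem:Duality2} --- precisely what lines~17--18 of Algorithm~\ref{algo:DichotomicSearch} are supposed to yield, while the test $\lambda^t y^\ast\neq\lambda^t y^l$ and the bookkeeping with $\mathcal{L}$ are literally identical in both algorithms. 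For the lexicographic phase, $D(\cU,(1,0))$ has optimal value $\min_{x\in\cX}f_1^\cU(x)=\epsilon_1$ by Lemma~\ref{lem:Duality2}, and the constraint ``$g_{(1,0)}(x)\le\epsilon_1$'' occurring in line~3 is, again by Lemma~\ref{lem:Duality2}, equivalent to the existence of $\pi^{(1)}\ge 0$ with $C^t\pi^{(1)}=\hat{c_1}(x)$ and $d^t\pi^{(1)}\le\epsilon_1$; consequently line~3 amounts to the single mixed-integer linear program $\min\{\,d^t\pi^{(2)}:\ C^t\pi^{(1)}=\hat{c_1}(x),\ C^t\pi^{(2)}=\hat{c_2}(x),\ d^t\pi^{(1)}\le\epsilon_1,\ \pi^{(1)},\pi^{(2)}\ge 0,\ x\in\cX\}$, whose feasible region is again a polyhedron intersected with $\Z^k\times\R^{n-k}$ and whose $x$-part coincides with $x^L$; lines~5--7 are symmetric. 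Existence of all these optima follows from compactness of $\cX$, continuity of $f^\cU$, and Corollary~\ref{cor:PropertiesForBRO}.

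Combining the two steps, every step of Algorithm~\ref{algo:dichotomic_with_dual} reproduces the corresponding step of dichotomic search applied to $\textup{BRO}(\cU)$; by the first step the latter is correct and finite, so Algorithm~\ref{algo:dichotomic_with_dual} returns all extreme supported nondominated points and a representative set of extreme supported efficient solutions of $\textup{BRO}(\cU)$, which together with Lemma~\ref{lem:Construct_Paretofront_from_ESN-Points} solves~\eqref{eq:GrundproblemBRO}. The one genuinely non-mechanical point I expect is the lexicographic phase: one must see that the inner minimizations defining the objective $g_{(0,1)}$ and the constraint $g_{(1,0)}(x)\le\epsilon_1$ can be flattened into a single mixed-integer linear program over $(x,\pi^{(1)},\pi^{(2)})$ without leaving the problem class of Lemma~\ref{lem:DichotomicSearch-BRO-polytopeU}; once $\lambda_1,\lambda_2>0$ has been observed, the while-loop step is routine.
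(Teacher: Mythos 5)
Your proposal is correct and takes essentially the same route as the paper's proof: correctness of dichotomic search on $\textup{BRO}(\cU)$ via Lemma~\ref{lem:DichotomicSearch-BRO-polytopeU} (with bilinearity guaranteeing (BRO-3)), plus Lemma~\ref{lem:Duality2} to justify that each weighted-sum subproblem is solved through $D(\cU,\lambda)$ and that the nondominated point is recovered as $y_i^\ast = d^t\pi^{(i)\ast}$. Your extra checks --- that $\lambda_1,\lambda_2>0$ in the while-loop so that \eqref{eq:DualityIdentity-VAR2+} applies, and that the lexicographic steps flatten into single mixed-integer linear programs over $(x,\pi^{(1)},\pi^{(2)})$ --- merely spell out details the paper's shorter proof leaves implicit.
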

\begin{proof}
The assumptions of Lemma~\ref{lem:DichotomicSearch-BRO-polytopeU} are satisfied since (BRO-1) and (BRO-3) hold and $\cU$ is a polytope.
Hence dichotomic search can be applied to $\min_{x\in\cX} f^\cU(x)$.
It remains to be shown that $P(\cU,\lambda)$  is solved correctly throughout the algorithm.
Lemma~\ref{lem:Duality2} shows that robust solutions of $P(\cU,\lambda)$ can be determined by solving $D(\cU,\lambda)$ (lines 2, 5 17) and the corresponding point on the Pareto front can be computed by $y_i^\ast = d^t \pi^{(i)\ast}$ (see line 16).
\end{proof}

\section{Numerical results} \label{sec:NumericalResults}
We implemented Algorithms  \ref{algo:ROA}, \ref{algo:MOA} and \ref{algo:dichotomic_with_dual} and conducted computational experiments.

\paragraph{Structure of the problems.}
We restricted ourselves to a certain class of biobjective optimization problems:
The objective functions $f_i \colon \cX \times \cU \to \R^2$, $i=1,2$, were assumed to be bilinear, and the feasible set and uncertainty set were polytopes or discrete sets.
More specifically, we considered problems
\begin{align*}
P(\cU)&& 
\left\{ 
\min_{x\in\cX} \begin{pmatrix}
\max_{\xi \in \cU} \xi^t M_1 x \\
\max_{\xi \in \cU} \xi^t M_2 x
\end{pmatrix}
\right\}_{\xi \in \cU}
&&
\end{align*}
with
\begin{align*}
\cX &= \{ x \in \R^n \colon L^x \leq x_i \leq U^x, Ax \leq b\}
\textup{ or }&
\cX &= \{ x \in \Z^n \colon L^x \leq x_i \leq U^x, Ax \leq b\},\\
\cU &= \{ \xi \in \R^m \colon L^\xi \leq \xi_i \leq U^\xi, C \xi \leq d\} 
\textup{ or }& 
\cU &= \{ \xi \in \Z^m \colon  L^\xi \leq \xi_i \leq U^\xi, C \xi \leq d\}. \\
\end{align*}
The lower and upper bounds $L^x,U^x,L^\xi,U^\xi$ are added to ensure that $\cX$ and $\cU$ are subsets of the boxes $[L^x,U^x]^n$ and $[L^\xi,U^\xi]^m$, respectively, and, thus, are bounded as it is required. 
We chose $L^x=1, U^x=200, L^\xi=-100$ and $U^\xi=100$. 
By doing so we avoid problems where $0_n \in \cX$ and $0_m \in \interior(\cU)$, since this would imply that $x=0$ is a trivial minimizer of $f_i^\cU(x)=\max_{\xi \in \cU} \xi M_i x$, $i=1,2$.


 

\paragraph{Generating instances}
We created 100 instances of $\textup{BRO}(\cU)$
with $A \in \Z^{30 \times 5}$ and $C \in \Z^{30 \times 5}$.
To obtain instances with smaller number of constraints, as used in our experiments, we removed constraints from these initial instances.  
This makes it easier to draw conclusions when comparing algorithm performance for different values of $n^\prime$ and $m^\prime$.
The entries of the matrices $A \in \Z^{n^\prime \times n}$ and $C\in \Z^{m^\prime \times m}$ as well as the entries of $M_1,M_2 \in \Z^{m \times n}$ determining the objective function are randomly and independently generated uniformly distributed integers in $\{-100,-99,\dots, 99,100\}$.

Equally, $\tilde{b}_i$, $i=1,2,\dots,n^\prime$ and $\tilde{d}_j$, $j=1,2,\dots,m^\prime$ are randomly generated uniformly distributed integers in $\{50,51,\dots, 99,100\}$.
We then set $\bar{x} \coloneqq (100,100,\dots,100)^t \in \Z^n$ and $\bar{\xi} \coloneqq (0,0,\dots,0)^t \in \Z^m$.
Let $A_i$, $i=1,2,\dots,n^\prime$ and $C_j$, $j=1,2,\dots,m^\prime$ denote the the columns of $A$ and $C$.
By setting the right hand-side coefficients
$b_i \coloneqq A_i^t x_0 + \tilde{b}_i \left\Vert A_i \right\Vert_2$ for $i=1,2,\dots,n^\prime$
and
$d_j \coloneqq C_j^t \xi_0 + \tilde{d}_j \left\Vert C_j \right\Vert_2$ for $j=1,2,\dots,m^\prime$,
we guarantee that the spheres
$\{x \in \R^n \colon \left\Vert x-\bar{x} \right\Vert_2 \leq 50 \}$
and
$\{\xi \in \R^m \colon \left\Vert \xi-\bar{\xi} \right\Vert_2 \leq 50 \}$
are included in $\cX$ and $\cU$, respectively.
See \cite{chandrasekaran2014integer} for more on this.

\paragraph{Implementation}
We used C++ to implement our algorithms. Whenever a linear or integer optimization problem has to be solved, Gurobi 2.3 is called (with default settings). We use Gurobi's capacity to provide solution that are known to be basic solutions. The implementations were tested on a computer with 16 GB RAM, AMD Ryzen 5 PRO 2500U, 2.00 GHz.

\subsection{Evaluation of the algorithms}
In this section we evaluate the performance of the algorithms for instances of different types (polytopal and discrete sets $\cX$ and $\cU$) and different sizes by varying the number of considered constraints $n^\prime$ and $m^\prime$, respectively.

\paragraph{Discrete feasible set and discrete uncertainty set}
First, let us consider problems with a discrete feasible set and a discrete uncertainty set.
For such instances, 
the robust optimizer's approach (ROA, Algorithm~\ref{algo:ROA}) and
the multiobjective optimizer's approach (MOA, Algorithm~\ref{algo:MOA}) in its baseline version and with its two warm-start modifications are available.
The dualization approach (DA, Algorithm~\ref{algo:dichotomic_with_dual}) cannot solve such instances as it requires a polytope as uncertainty set.

Figure \ref{fig:ddX} shows the average running time of our algorithms. Each data point is the average over 100 instances with $n^\prime$ constraints on the feasible set.
The number of variables for the feasible set $n$, the number of variables for the uncertainty set $m$ and the number of constraints for the uncertainty set $m^\prime$ are all fixed and set at 5.

\begin{figure}[hbt!]
\centering
\includegraphics[width=0.8\textwidth]{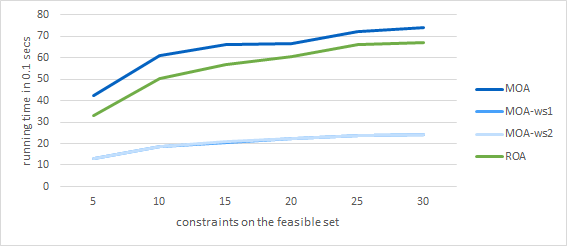}
\caption{Average running time of our four algorithms for 100 instances as a function of $n^\prime$ with $n=m=m^\prime=5$ fixed, $\cX$ and $\cU$ discrete (the lines for MOA-ws1 and MOA-ws2 overlap and are hard to see)} \label{fig:ddX}
\end{figure}

Independently of $n^\prime$, the robust optimizer's approach -- where the uncertainty set $\cU^{(k)}$ increases monotonously -- is faster than the baseline version of the multiobjective optimizer's approach.
However, the warm start modifications to the latter method turn out to be significant improvements over the baseline version: with those the multiobjective optimizer's approach performs faster.
We see a clear increase in running time when going from 5 to 10 constraints for all tested methods, but above that point an increasing number number of constraints does not seem to make the problem much harder to solve. 

Figure \ref{fig:ddU} shows how the number of constraints in the definition of the uncertainty set $\cU$ influences the running time.

\begin{figure}[hbt!]
\centering
\includegraphics[width=0.8\textwidth]{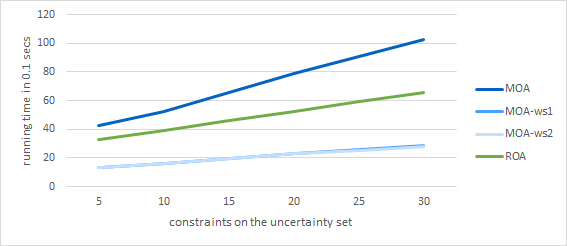}
\caption{Average running time of our four algorithms for 100 instances as a function of $m^\prime$ with $n=m=n^\prime=5$ fixed, $\cX$ and $\cU$ discrete} \label{fig:ddU}
\end{figure}

We observe the same pattern: The modified warm-start versions of MOA are by far the fastest algorithms; ROA is still faster than the baseline version of MOA. Clearly, the problem gets harder the more constraints are necessary to describe $\cU$. 
This leads us to conclude that the difficulty of the problem is rooted much more in the complexity of $\cU$ than in the one of $\cX$.

\paragraph{Discrete feasible set and polytopal uncertainty set}
Now let us turn to problems with a polytope as uncertainty set.
On those instances all of the algorithms we introduced can be used.
This includes the dualization approach (DA), which is the only algorithm that does not use optimization-pessimization but instead solves the scalarized problem $P(\cU,\lambda)$ for each weight $\lambda$ directly (via the means of dualization of the inner problem).

Figures~\ref{fig:dcX} and~\ref{fig:dcU} show the average running time of our algorithms on the same instances as in Figures~\ref{fig:ddX} and~\ref{fig:ddU} -- just with the integrality constraint for $\cU$ dropped.

\begin{figure}[hbt!]
\centering
\includegraphics[width=0.8\textwidth]{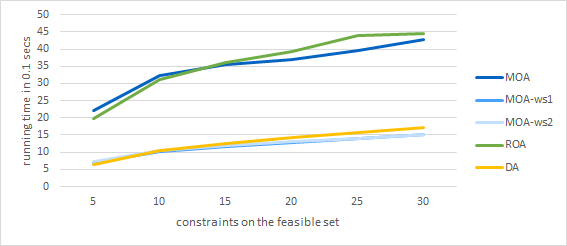}
\caption{Average running time of our five algorithms for 100 instances as a function of $n^\prime$ with $n=m=m^\prime=5$ fixed, $\cX$ discrete, $\cU$ polytope} \label{fig:dcX}
\end{figure}

\begin{figure}[hbt!]
\centering
\includegraphics[width=0.8\textwidth]{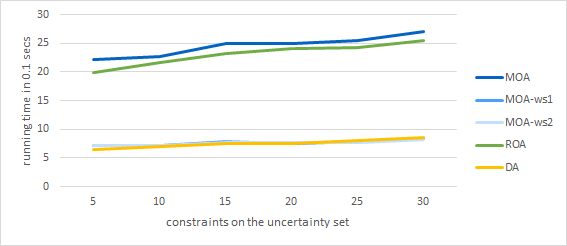}
\caption{Average running time of our five algorithms for 100 instances as a function of $m^\prime$ with $n=m=n^\prime=5$ fixed, $\cX$ discrete, $\cU$ polytope} \label{fig:dcU}
\end{figure}

Our experiments show that for such instances DA is effective, but not noticeably better than the modified versions of MOA.
The ranking of the other algorithms is essentially the same as before: The modified warm-start versions of MOA outperform ROA which is still faster than MOA's baseline version.
Dropping the integrality constraint reduced the overall running time of all algorithms by about factor two. This is while the number of extreme supported nondominated points stayed roughly the same.

The apparent ranking of the proposed algorithms raises the question of whether this applies only on average over a larger number of instances, or if it also applies to each individual instance.
For this we turn to Figure~\ref{fig:experimente-time-for-n-instances}.
In this figure we display the objective values for the 5 different algorithms on the first 10 of the tested 100 instances. 
Including all tested instances here does not change the discussed findings, but decreases visibility, which is why we included only the results of ten instances.

\begin{figure}[hbt!]
\centering
\includegraphics[width=0.8\textwidth]{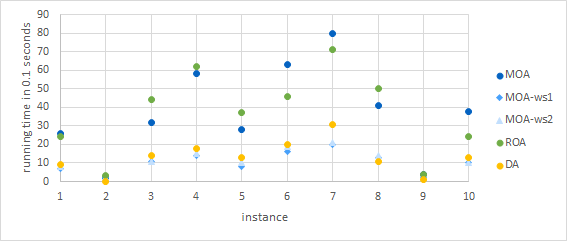}
\caption{Running time of our five algorithms for 10 instances with $n=n^\prime=m=5, m^\prime=30$, $\cX$ discrete, $\cU$ polytope} \label{fig:experimente-time-for-n-instances}
\end{figure}

Each of the ten columns in Figure~\ref{fig:experimente-time-for-n-instances} represents one instance (with $n=m=n^\prime=5, m^\prime=30$) on which we tested the algorithms.
We can see that for all instances either DA or the warm-start modifications of MOA perform best and either ROA or the baseline version of MOA perform worst.
The ranking of the algorithms is not the same for all instances.

To get a deeper understanding of this we turn to
Figure~\ref{fig:experimente-time-vs-scenarios}.

\begin{figure}[hbt!]
\centering
\includegraphics[width=0.8\textwidth]{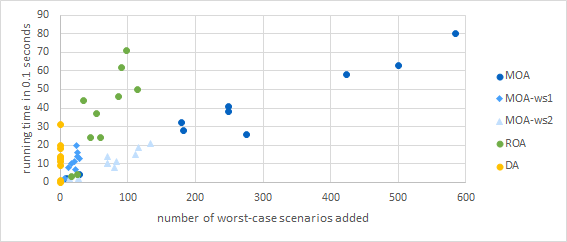}
\caption{Running time vs. number of worst-case scenarios added for 10 instances with $n=n^\prime=m=5, m^\prime=30$, $\cX$ discrete, $\cU$ polytope} \label{fig:experimente-time-vs-scenarios}
\end{figure}

For the ROA and all three versions of MOA it shows
the running time plotted against the number of times we add a worst-case scenario during the execution of the algorithms.
The strong correlation indicates that the number of pessimization steps decisively determines the overall time required.
The two algorithms where the uncertainty set $\cU^{(k)}$ grows monotonously, namely MOA-ws1 and ROA, have similarly high costs per added scenario. This can be explained by the fact that the resulting robust optimization problems are harder to solve due to the number of scenarios in $\cU^{(k)}$. 
Vice versa, MOA and MOA-ws2 both ``forget'' scenarios. Consequently, they need to (re)add more scenarios, but the optimization problems are simpler. For them the ratio between runtime and added scenario is lower.
This also explains why the warm-start modifications pay off: Apparently, the additional cost of starting with a larger scenario set $\cU^{(k)}$ is more than offset by less frequent need to execute of the pessimization step.

\paragraph{Evaluation for polytopal feasible sets}
Additionally, we tested the algorithms on instances with feasible sets $\cX$ that are polytopes.
In this case DA is faster. Apart from that, the observations do not deviate
significantly from the ones discussed in the previous paragraphs except that if $\cU$ is a polytope too, DA is faster than MOA-ws1 and MOA-ws2 as can be seen in Figure~\ref{fig:ccU}.

\begin{figure}[h!]
\centering
\includegraphics[width=0.8\textwidth]{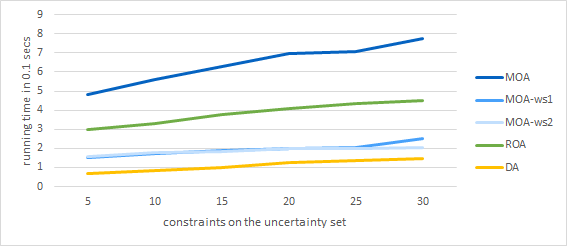}
\caption{Average running time of our five algorithms for 100 instances as a function of $m^\prime$ with $n=m=n^\prime=5$ fixed, $\cX$ and $\cU$ polytopes} \label{fig:ccU}
\end{figure}


\paragraph{The algorithms as approximation algorithms}
Lastly, we want to investigate how soon the algorithms provide a reasonable approximation of the Pareto front.
For this we turn to Algorithm~\ref{algo:ROA}, which in the $k$-th iteration determines (via dichotomic search) all extreme supported nondominated points of $BRO(\cU^{k})$ and then determines the worst-case outcomes of those points under $\cU$.
Figure~\ref{fig:experimente-bounds-ROA} shows for an instance with $n=m=n^\prime=5, m^\prime=30$ and $\cX$, $\cU$ both continuous, the lower and upper bound determined in the second and fourth iteration and the robust solutions determined in the final 7th iteration.
We can see that our method provides a good approximation to the Pareto front early on.
\begin{figure}[h!]
\centering
\includegraphics[width=0.8\textwidth]{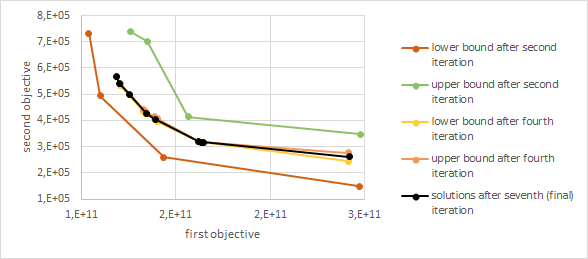}
\caption{Lower bound and upper bound determined in the second, fourth iteration and in the final (7th) iteration in an instance with $n=m=10, n^\prime=m^\prime=20$ and $\cX$, $\cU$ continuous} \label{fig:experimente-bounds-ROA}
\end{figure}

\section{Conclusions and further research} \label{sec:Conclusion}

In this paper, we have shown how biobjective mixed-integer linear optimization problems, where both objective functions are the maximum of a set of linear objective functions, can be solved.
While we framed this as a method for \emph{robust} biobjective optimization 
-- specifically to determine \emph{point-based minmax robust efficient} solutions for biobjective mixed-integer linear robust optimization problems --, 
our methods are not limited to such problems.
They can be applied to any biobjective optimization problem of the described structure.

Our solution method combines a well-known approach from biobjective optimization, namely dichotomic search, with approaches used in robust optimization, namely optimization-pessimization and reformulation.
In our numerical experiments, it has be shown that all our approaches are sensible for some problems.
We illustrate which approach is most suitable for which situation:
The robust optimizer's approach provides a good approximation of the set of extreme supported efficient solutions already early on; the warm-start modifications improve the multiobjective optimizer's approach such that it is fastest on instances where $\cU$ is discrete.
If $\cX$ and $\cU$ are polytopes, the dualization approach is the fastest.

Many avenues for further research exist that use the framework that we developed:
First, other and more advanced solution methods for multiobjective optimization can be used.
More specifically, dichotomic search can be replaced by any other enumeration method for extreme nondominated points (such as the one proposed in \cite{BoeklerMutzel2015}).
That way, a method similar to the one proposed in this paper can be used for problems with more than two objectives.
Similarly, solution methods for specific problems such as the multiobjective knapsack or the multiobjective TSP (see \cite{Vise1998TwophasesMA, EhrgottBook}) can be combined with optimization-pessimization to find robust solutions of these problems.

Second, extension to other robustness concepts for multiobjective optimization, such as set-based minmax robust efficiency, would be desirable. 
We plan to adapt the presented algorithms to the concept of regret robust efficiency (see \cite{GroetznerWerner22}).

\section*{Acknowledgments}

Fabian Chlumsky-Harttmann was supported by the DFG Research Grant ``Robust Multi-Objective Optimization: Analysis and Approaches''.

\printbibliography

\end{document}